\date{}
\def\NZQ{\Bbb}               % the font for N,Z,Q,R,C
\def\NN{{\NZQ N}}
\def\ZZ{{\NZQ Z}}
\def\frk{\mathfrak}               % font for "Fraktur"
\def\mm{{\frk m}}
\def\a{{\bold a}}
\def\e{{\bold e}}
\def\opn#1#2{\def#1{\operatorname{#2}}} % to make operators
\opn\chara{char} \opn\length{\ell} \opn\pd{pd} \opn\rk{rk}
\opn\projdim{proj\,dim} \opn\injdim{inj\,dim} \opn\rank{rank}
\opn\depth{depth} \opn\codepth{codepth} \opn\grade{grade}
\opn\height{ht} \opn\embdim{emb\,dim} \opn\codim{codim} \opn\ara{ara}
\opn\Tr{Tr} \opn\bigrank{big\,rank}
\opn\superheight{superheight}\opn\lcm{lcm}
\opn\trdeg{tr\,deg}
\opn\reg{reg} \opn\lreg{lreg} \opn\skel{skel} \opn\Gr{Gr}
\opn\dim{dim} \opn\arithdeg{arithdeg}
\opn\Spec{Spec} \opn\Supp{Supp} \opn\supp{supp} \opn\Sing{Sing}
\opn\Ass{Ass}
\opn\Ann{Ann} \opn\Rad{Rad} \opn\Soc{Soc}
\opn\Sym{Sym} \opn\Ker{Ker} \opn\Coker{Coker} \opn\Im{Im}
\opn\Hom{Hom} \opn\Tor{Tor} \opn\Ext{Ext} \opn\End{End}
\opn\Aut{Aut} \opn\id{id} \opn\ini{in}
\opn\aff{aff} \opn\con{conv} \opn\relint{relint} \opn\st{st}
\opn\lk{lk} \opn\cn{cn} \opn\core{core} \opn\vol{vol}
\opn\link{lk} 
\opn\star{st} 
\opn\skel{skel}
\opn\gr{gr}
\def\pot#1#2{#1[\kern-0.28ex[#2]\kern-0.28ex]}
\opn\dirlim{\underrightarrow{\lim}}
\opn\inivlim{\underleftarrow{\lim}}
\let\to=\rightarrow
\newtheorem{Theorem}{Theorem}[section]
\newtheorem{Lemma}[Theorem]{Lemma}
\newtheorem{Corollary}[Theorem]{Corollary}
\newtheorem{Proposition}[Theorem]{Proposition}
\newtheorem{Remark}[Theorem]{Remark}
\newtheorem{Example}[Theorem]{Example}
\let\epsilon\varepsilon
\let\phi=\varphi
\let\kappa=\varkappa
\opn\Gin{Gin}
\def\D{\Delta}
\def\G{\Gamma}
\def\F{{\mathcal F}}
\def\0{\bold 0}
\opn\inii{in} \opn\inim{inm} \opn\rate{rate}
\numberwithin{equation}{section}
\begin{document}

\title[powers of Stanley-Reisner ideals]{Cohen-Macaulayness of large powers of Stanley-Reisner ideals}
%%%%%%%%%%%%%%%%%%%%%%%%%%%%%%%%%%%%%%%%
%% Information for first author
%%%%%%%%%%%%%%%%%%%%%%%%%%%%%%%%%%%%%%%%
\author{Naoki Terai}
\medskip
\address{Department of Mathematics, Faculty of Culture
and Education, Saga University, Saga 840--8502, Japan.} 
\email{terai@cc.saga-u.ac.jp}
%%%%%%%%%%%%%%%%%%%%%%%%%%%%%%%%%%%%%%%%
%% Information for second author
%%%%%%%%%%%%%%%%%%%%%%%%%%%%%%%%%%%%%%%%
\author{Ngo Viet Trung }
\address{Institute of Mathematics, 18 Hoang Quoc Viet, Hanoi,Vietnam} 
\email{nvtrung@math.ac.vn}
 
\subjclass[2000]{Primary 13F55, Secondary 13H10}
\keywords{Stanley-Reisner ideal, power, Cohen-Macaulay, generalized Cohen-Macaulay, Serre condition $(S_2)$, Buchsbaum, matroid, complete intersection, facet ideal, cover ideal}
\thanks{The first author is supported by JSPS 20540047 and the second author by the National Foundation of  Science and Technology Development.}

\begin{abstract}
We prove that for $m \ge 3$, the symbolic power $I_\D^{(m)}$ of a Stanley-Reisner ideal is Cohen-Macaulay if and only if the simplicial complex $\D$ is a matroid. Similarly, the ordinary power $I_\D^m$ is Cohen-Macaulay for some $m \ge 3$ if and only if $I_\D$ is a complete intersection. These results  solve several open questions on the Cohen-Macaulayness  of ordinary and symbolic powers of Stanley-Reisner ideals. Moreover, they have interesting consequences on the Cohen-Macaulayness  of symbolic powers of  facet ideals and cover ideals. 
\end{abstract}

\maketitle

\section{Introduction}

In this article we study the Cohen-Macaulayness of
ordinary and  symbolic powers of squarefree ideals.
More concretely, we consider such an ideal as the Stanley-Reisner ideal $I_\D$ of a simplicial complex $\D$ and we give conditions on $\D$ such that the ordinary or symbolic powers $I_\D^m$ or $I_\D^{(m)}$ is Cohen-Macaulay for a fixed number $m \ge 3$.
\smallskip

A classical theorem \cite{CN} implies that all ordinary powers $I_\D^m$ are Cohen-Macaulay if and only if $I_\D$ is a complete intersection. On the other hand, there are examples such that the second power $I_\D^2$ is Cohen-Macaulay but the third power $I_\D^3$ is not Cohen-Macaulay. It does not deny any logical possibility that there may exist  a Stanley-Reisner ideal such that its 100-th power is Cohen-Macaulay but  its 101-th power is not. Hence it is of great interest to find combinatorial conditions on $\D$ which guarantees the Cohen-Macaulayness of a given ordinary power of $I_\D$. Since $I_\D^m$ is Cohen-Macaulay if and only if  $I_\D^{(m)}$ is Cohen-Macaulay and $I_\D^{(m)} = I_\D^m$, one may raise the same problem on the Cohen-Macaulayness of a given symbolic power of  $I_\D$. \smallskip

Studies in this direction  began in \cite{CRTY,MiT1,RTY1} for the cases
$\dim \D = 1$  ($\D$ is a graph) or $\D$ is a flag complex ($I_\D$ is generated by quadratic monomials), where one could give precise combinatorial conditions for the Cohen-Macaulayness of each power $I_\D^m$ or $I_\D^{(m)}$ in terms of $\D$. 
For the general case,  one was only able to give conditions for the Cohen-Macaulayness of $I_\D^{(2)}$ and $I_\D^2$ \cite{MiT2,RTY2}. To characterize the Cohen-Macaulayness of $I_\D^{(m)}$ and $I_\D^m$ seems to be a difficult problem. In fact, there were open questions which would have an answer if one could solve this problem. 
\smallskip

First of all, results on the preservation of Cohen-Macaulayness among large powers of $I_\D$ \cite{MiT2} led to the following
\medskip

\noindent{\bf Question 1}. 
Is $I_{\Delta }^{(m)}$ is Cohen-Macaulay if $I_{\Delta }^{(m+1)}$ is Cohen-Macaulay?
\medskip

\noindent{\bf Question 2}.   
Does there exist a number $t$ depending on $\dim \Delta$ such that 
if $I_{\Delta }^{(t)}$ is Cohen-Macaulay, then  
$I_{\Delta }^{(m)}$ is Cohen-Macaulay for every $m \ge 1$?
\medskip

If $\dim \D = 2$, there is a complete classification of complexes possessing a Cohen-Macaulay ordinary power in \cite{TrTu}. Together with results for $\dim \D=1$ \cite{MiT1} and for flag complexes \cite{RTY1} this suggests the following
\medskip

\noindent{\bf Question 3}.   
Is $I_{\Delta }^{m}$ Cohen-Macaulay for some  $m \ge 3$ if and only if $I_\D$ is a complete intersection?
\medskip

On the other hand, works on the Buchsbaumness or on Serre condition $(S_2)$ of ordinary and symbolic powers of Stanley-Reisner ideals in \cite{MN1, MN2, RTY1, TY} showed that these properties are strongly related to the Cohen-Macaulayness and that one may raise the following
\medskip

\noindent{\bf Question 4}.  
Is $I_{\Delta }^{(m)}$ (or $I_{\Delta }^{m}$) Cohen-Macaulay if  $I_{\Delta }^{(m)}$ (or $I_{\Delta }^{m}$) is (quasi-)Buchsbaum or satisfies $(S_2)$ for large $m$?
\medskip

This paper will give a positive answer to all these questions. More precisely, we prove
the following characterizations of the Cohen-Macaulayness of  $I_\D^m$ and $I_\D^{(m)}$ for $m \ge 3$.

\begin{Theorem}\label{SymMain}
Let $\D$ be a simplicial complex with $\dim \D \ge 2$. Then the following conditions are equivalent: \par
{\rm (i)} $I_{\Delta }^{(m)}$ is Cohen-Macaulay for every $m \ge 1$.\par
{\rm (ii)} $I_{\Delta }^{(m)}$ is Cohen-Macaulay for some $m \ge 3$.\par
{\rm (iii)} $I_{\Delta }^{(m)}$ satisfies $(S_2)$ for some $m \ge 3$.\par
{\rm (iv)} $I_{\Delta }^{(m)}$ is (quasi-)Buchsbaum for some $m \ge 3$.\par
{\rm (v)} $\Delta$ is a matroid.
\end{Theorem}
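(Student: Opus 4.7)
The implication $(\mathrm{v}) \Rightarrow (\mathrm{i})$ is the known theorem of Minh--Trung and Varbaro that every symbolic power of a matroidal Stanley-Reisner ideal is Cohen-Macaulay, while $(\mathrm{i}) \Rightarrow (\mathrm{ii})$ and $(\mathrm{ii}) \Rightarrow (\mathrm{iii}), (\mathrm{iv})$ are immediate since Cohen-Macaulay implies both Serre's condition $(S_2)$ and the (quasi-)Buchsbaum property. Thus the content of the theorem lies in $(\mathrm{iii}) \Rightarrow (\mathrm{v})$ and $(\mathrm{iv}) \Rightarrow (\mathrm{v})$, and my plan is to treat them by one common combinatorial argument.

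The first move is to reduce to codimension-two links. From the primary decomposition $I_{\Delta}^{(m)} = \bigcap_F \mathfrak{p}_F^m$, with $F$ ranging over the facets of $\Delta$, the associated primes of $S/I_{\Delta}^{(m)}$ are exactly the facet primes, so either hypothesis $(\mathrm{iii})$ or $(\mathrm{iv})$ forces $S/I_{\Delta}^{(m)}$ to be equidimensional and hence $\Delta$ to be pure. Localization at $\mathfrak{p}_\sigma$ for a face $\sigma$ identifies the symbolic power with that of $I_{\lk_\Delta(\sigma)}$ up to a regular polynomial extension, and both $(S_2)$ and the (quasi-)Buchsbaum property descend under such localization. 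Combined with the characterization that $\Delta$ is a matroid if and only if $\Delta$ is pure and every codimension-two link is a matroid (equivalently, a complete multipartite graph, i.e.\ a rank-$2$ matroid), this reduces the problem, by induction on $\dim \Delta$, to the one-dimensional statement: if $G$ is a graph and $I_G^{(m)}$ is either $(S_2)$ or (quasi-)Buchsbaum for some $m \ge 3$, then $G$ is complete multipartite.

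To prove this graph statement I plan to exploit Takayama's combinatorial formula for the multigraded components of $H^i_{\mathfrak{m}}(S/I_G^{(m)})$. The monomials in $I_G^{(m)}$ correspond to vertex multicovers of $G$ of weight $\ge m$ on every edge, so by choosing a multidegree vector $\mathbf{a}$ sitting just below a minimal such cover one can compute the relevant reduced simplicial homology and exhibit a nonzero element of $H^1_{\mathfrak{m}}(S/I_G^{(m)})$ whenever $G$ contains a forbidden induced pattern (such as an induced path on four vertices) obstructing multipartiteness, contradicting $\depth S/I_G^{(m)} \ge 2$. The hypothesis $m \ge 3$ enters exactly here: at $m=2$ the admissible covers are too coarse to detect the obstruction, in accord with the known non-matroid examples whose symbolic square is Cohen-Macaulay. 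The main obstacle of the whole argument is precisely this last combinatorial step: extracting a rigid forbidden-subgraph condition from the single-power hypothesis requires a sharper use of Takayama's formula than is needed to settle the (much easier) all-powers Cohen-Macaulay case handled by Minh--Trung and Varbaro.
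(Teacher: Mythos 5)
Your outer frame (localizing at vertex primes so that the hypothesis passes to the symbolic powers of the links, then inducting down to graphs) is indeed the paper's strategy for (iii), but the combinatorial pivot you rely on is false as stated. It is not true that ``$\Delta$ is a matroid if and only if $\Delta$ is pure and every codimension-two link is a matroid.'' Two disjoint $2$-simplices are pure with all vertex links matroids but are not a matroid; worse, the complex generated by the two facets $\{1,2,3,4\}$ and $\{4,5,6,7\}$ is \emph{connected}, pure, and all of its edge links are matroids, yet it is not a matroid (its vertex link at $4$ is disconnected). The correct local criterion, which the paper has to prove as a new result (Theorem \ref{SymLocal}), is: for $\dim\Delta\ge 2$, $\Delta$ is a matroid iff $\Delta$ is \emph{connected} and every vertex link is a matroid, and in the induction the connectedness of every iterated link must be supplied by the algebraic hypothesis at each localization. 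For $(S_2)$ this is available: $(S_2)$ localizes, gives $\depth\ge 2$ at every stage, and Takayama's formula at multidegree $\0$ (where $\Delta_{\0}=\Delta$) converts $H^1_{\mm}=0$ into connectedness -- this is exactly the paper's Lemma \ref{S2}. Your proposal never isolates this connectedness step, and the "characterization" you invoke to bypass it has the counterexamples above. (Also, the base case for (iii) needs no new Takayama computation: in dimension one $(S_2)$ already forces Cohen--Macaulayness since $\dim S/I=2$, and then \cite{MiT1} applies.)

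The quasi-Buchsbaum implication (iv)$\Rightarrow$(v) genuinely cannot be run through the ``same common argument,'' and here your plan breaks in two places. First, quasi-Buchsbaumness does not give $\depth S/I_\Delta^{(m)}\ge 2$ (it only says $\mm H^1_{\mm}=0$), so connectedness of $\Delta$ does not follow from any depth argument; and quasi-Buchsbaumness does not ``descend'' to the links as quasi-Buchsbaumness -- what is true is the stronger fact that it implies generalized Cohen--Macaulayness, so the localizations $I_\Delta^{(m)}S[x_i^{-1}]$ are Cohen--Macaulay and the links' symbolic powers are Cohen--Macaulay, whence $\Delta$ is a \emph{disjoint union} of matroids (the paper's Theorem \ref{SymFLC}). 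Second, your proposed one-dimensional target ``$I_G^{(m)}$ (quasi-)Buchsbaum for some $m\ge3$ $\Rightarrow$ $G$ complete multipartite'' is simply false at $m=3$: by Minh--Nakamura, $I_\Delta^{(3)}$ of the $5$-cycle is Buchsbaum but the $5$-cycle is not a matroid (for graphs one needs $m\ge4$). The paper's missing idea, which your proposal lacks, is a separate global connectedness lemma (Lemma \ref{SymNonQBbm}): using Takayama's formula one checks $\Delta_{\e}=\Delta_{\0}=\Delta$ for $\e=(1,0,\dots,0)$ when $m\ge2$, so multiplication by $x_1$ on $H^1_{\mm}(S/I_\Delta^{(m)})$ from degree $\0$ to degree $\e$ is identified with the identity on $\widetilde H^0(\Delta;K)$, and quasi-Buchsbaumness ($\mm H^1_{\mm}=0$) then forces $\widetilde H^0(\Delta;K)=0$, i.e.\ $\Delta$ connected; combined with the disjoint-union-of-matroids statement this gives (v). Without this (or an equivalent substitute) your treatment of (iv) has no route to connectedness, and the inductive base you propose for it is contradicted by the $5$-cycle.
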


\begin{Theorem}\label{OrdMain}
Let $\D$ be a simplicial complex with $\dim \D \ge 2$. Then the following conditions are equivalent:\par
{\rm (i)} $I_{\Delta }^{m}$ is Cohen-Macaulay for every $m \ge 1$.\par
{\rm (ii)} $I_{\Delta }^{m}$ is Cohen-Macaulay for some $m \ge 3$.\par
{\rm (iii)} $I_{\Delta }^{m}$ satisfies $(S_2)$ for some $m \ge 3$.\par
{\rm (iv)} $I_{\Delta }^{m}$ is (quasi-)Buchsbaum for some $m \ge 3$.\par
{\rm (v)} $\Delta$ is a complete intersection.
\end{Theorem}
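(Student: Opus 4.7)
The implications $(i)\Rightarrow(ii)\Rightarrow(iii),(iv)$ are trivial (a Cohen--Macaulay ring satisfies $(S_2)$ and is Buchsbaum), and $(v)\Rightarrow(i)$ is the classical Cowsik--Nori theorem \cite{CN}, which states that all ordinary powers of a complete intersection are Cohen--Macaulay. The content is therefore to show that each of (ii), (iii), (iv) implies (v), and the guiding idea is to reduce to Theorem \ref{SymMain} by transferring the hypothesis from $I_\Delta^m$ to $I_\Delta^{(m)}$.

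For this transfer, observe that the minimal primes of $I_\Delta^m$ are the facet primes $P_F=(x_i:i\notin F)$, and since $I_\Delta R_{P_F}=P_F R_{P_F}$ one has $I_\Delta^m R_{P_F}=P_F^m R_{P_F}$; hence the $P_F$-primary component of $I_\Delta^m$ is exactly $P_F^m$. Under $(S_2)$, $R/I_\Delta^m$ is unmixed, so no embedded prime appears and $I_\Delta^m=\bigcap_F P_F^m=I_\Delta^{(m)}$ directly. Under the Cohen--Macaulay, Buchsbaum, or quasi-Buchsbaum hypothesis, the only possible embedded associated prime of $I_\Delta^m$ is the maximal ideal $\mm$, so $I_\Delta^{(m)}=I_\Delta^m:\mm^\infty$ and $I_\Delta^{(m)}/I_\Delta^m$ has finite length. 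Feeding the short exact sequence
\[
0\to I_\Delta^{(m)}/I_\Delta^m\to R/I_\Delta^m\to R/I_\Delta^{(m)}\to 0
\]
into local cohomology shows that $R/I_\Delta^{(m)}$ inherits the same property (CM, $(S_2)$, Buchsbaum, or quasi-Buchsbaum), so Theorem \ref{SymMain} applies and forces $\Delta$ to be a matroid; a further argument then upgrades this to the equality $I_\Delta^m=I_\Delta^{(m)}$.

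It then remains to prove the combinatorial-algebraic statement: if $\Delta$ is a matroid and $I_\Delta^{(m)}=I_\Delta^m$ for some $m\ge 2$, then the minimal non-faces (circuits) of $\Delta$ are pairwise disjoint, that is, $I_\Delta$ is a complete intersection. The plan is the contrapositive: assuming circuits $C_1\neq C_2$ share a vertex $v$, the matroid circuit-exchange axiom produces a circuit $C_3\subseteq (C_1\cup C_2)\setminus\{v\}$, and one uses $C_1, C_2, C_3$ to build a monomial $u$ satisfying $\sum_{i\notin F}\deg_i u\ge m$ at every facet $F$ (placing it in $I_\Delta^{(m)}$) while keeping its total degree strictly less than $m\cdot\min_C|C|$ (excluding it from $I_\Delta^m$). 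This explicit construction, in which one must simultaneously guarantee the facet-complement inequality for every facet while keeping the global degree small, is the main obstacle; the matroid exchange axiom is used essentially to propagate the low-degree witness $u$ across the facet structure, and once this step is in place the circle of implications closes.
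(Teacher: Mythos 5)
Your overall architecture has a genuine gap at exactly the point where the real work lies. After transferring the hypothesis to the symbolic power and invoking Theorem \ref{SymMain} to get that $\D$ is a matroid, everything hinges on the claim ``if $\D$ is a matroid and $I_\D^{(m)}=I_\D^m$ for some $m\ge 3$, then $I_\D$ is a complete intersection.'' You do not prove this: you describe a hoped-for construction (circuit elimination on two intersecting circuits, then a monomial $u$ with $\sum_{i\notin F}\deg_i u\ge m$ for every facet $F$ but $\deg u<m\cdot\min_C|C|$) and yourself call it the main obstacle. Producing such a $u$ uniformly over all facets of an arbitrary matroid is precisely the nontrivial content, and without it the implications (ii),(iii)$\Rightarrow$(v) are not established. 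The paper never takes this route: it proves (iii)$\Rightarrow$(v) by localizing at vertices (Lemma \ref{OrdExt}, Corollary \ref{OrdLink}), which converts $(S_2)$ or Cohen--Macaulayness of $I_\D^mS[x_i^{-1}]$ into the same property for all powers $I_{\link_\D\{i\}}^k$, $k\le m$, then inducts on dimension down to the graph case \cite{MiT1}, gets connectedness from Takayama's formula (Lemma \ref{S2}), and concludes with the structure results ``connected $+$ locally CI $=$ CI'' (Lemma \ref{OrdLocal}) and ``locally CI $=$ disjoint union of CIs'' (Lemma \ref{LCI}); no comparison of $I_\D^m$ with $I_\D^{(m)}$ is needed in dimension $\ge 2$.

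There is a second, independent gap in your (quasi-)Buchsbaum case (iv). There the transfer only yields that $I_\D^{(m)}/I_\D^m=H_\mm^0(S/I_\D^m)$ has finite length and is killed by $\mm$; it does not yield $I_\D^{(m)}=I_\D^m$, and your ``further argument then upgrades this to the equality'' is unsupported. Note also that even a completed version of your monomial construction would not close this case: exhibiting some $u\in I_\D^{(m)}\setminus I_\D^m$ contradicts nothing, since quasi-Buchsbaumness tolerates a nonzero $H_\mm^0$ annihilated by $\mm$; you would need, say, an element with $x_ju\notin I_\D^m$ for some variable $x_j$. The paper handles (iv)$\Rightarrow$(v) differently: quasi-Buchsbaum implies generalized Cohen--Macaulay, Theorem \ref{OrdFLC} (again via links and Lemma \ref{LCI}) shows $\D$ is a disjoint union of complete intersections of the same dimension, and Lemma \ref{OrdQB} (a Takayama-formula argument comparing the degree $\0$ and $\e$ components of $H_\mm^1$) gives connectedness, hence a complete intersection. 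Your reductions that do work (minimal primary components of $I_\D^m$ are $P_{\overline F}^m$, so $(S_2)$ or CM forces $I_\D^m=I_\D^{(m)}$, and the short exact sequence transfers quasi-Buchsbaumness to $S/I_\D^{(m)}$) are fine, but as it stands the proposal proves neither (iii)$\Rightarrow$(v) nor (iv)$\Rightarrow$(v).
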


These theorems are remarkable in the sense that the Cohen-Macaulayness of $I_\D^{(m)}$ or $I_\D^m$ is equivalent to much weaker properties and that it can be characterized in purely combinatorial terms {\em which are the same for all $m \ge 3$}. 
Both theorems except condition (iii) also hold for $\dim \D = 1$ \cite{MiT1}. If $\dim \D = 1$, the Buchsbaumness behaves a little bit differently. Minh-Nakamura \cite{MN1} showed that for a graph $\D$, $I_\D^{(3)}$ is Buchsbaum if and only if $I_\D^{(2)}$ is Cohen-Macaulay and that for $m \ge 4$, $I_\D^{(m)}$  is Cohen-Macaulay if $I_\D^{(m)}$ is Buchsbaum.  Similar results also hold for the Buchsbaum property of $I_\D^m$ \cite{MN2}.
\smallskip

The equivalence (i) $\Leftrightarrow$ (v) of the first theorem was discovered in \cite{MiT2, Va}.
This result establishes an unexpected link between a purely algebraic property with a vast area of combinatorics. 
The equivalence (i) $\Leftrightarrow$ (vi) of the second theorem was proved 
by Cowsik-Nori \cite{CN} under much more general setting.
Our new contributions are (iii)$\Rightarrow$(v) and (iv)$\Rightarrow$(v) in both theorems.
The proofs involve both algebraic and combinatorial arguments.
\smallskip

The idea for the proof of (iii)$\Rightarrow$(v) comes from the fact that matroids and complete intersection complexes can be characterized by properties of their links. We call $\D$ locally a matroid or a complete intersection if the links of $\D$ at the vertices are matroids or complete intersections, respectively. It turns out that a complex is a matroid if and only if it is connected and locally a matroid (Theorem \ref{SymLocal}).  
A similar result on complete intersection was already proved in \cite{TY}. 
The connectedness of $\D$ can be studied by using Takayama's formula for the local cohomology modules of monomial ideals \cite{Ta}. Since the links of $\D$ correspond to localizations of $I_\D$, these results allow us to reduce our investigation to the one-dimensional case for which everything is known by \cite{MiT1}. \smallskip

The proof of (iv)$\Rightarrow$(v) follows from our investigation on the generalized Cohen-Macaulayness of $I_\D^{(m)}$ or $I_\D^m$. It is known that (quasi-)Buchsbaum rings are generalized Cohen-Macaulay. In general, it is difficult to classify generalized Cohen-Macaulay ideals because this class of ideals is too large. However, we can prove that $I_\D^{(m)}$  is  generalized Cohen-Macaulay for some $m \ge 3$ or for every $m \ge 1$  if and only if $\D$ is a union of disjoint matroids of the same dimension (Theorem \ref{SymFLC}). 
Similarly, $I_\D^m$  is  generalized Cohen-Macaulay for some $m \ge 3$ or for every $m \ge 1$  if and only if $\D$ is a union of disjoint complete intersections of the same dimension (Theorem \ref{OrdFLC}). 
A weaker version of this result was proved earlier by Goto-Takayama \cite{GT}. For flag complexes it was proved in \cite{RTY1}. \smallskip

As applications we study the Cohen-Macaulayness of symbolic powers of the facet ideal $I(\D)$ and the cover ideal $J(\D)$, which are generated by the squarefree monomials of the facets of $\D$ or of their covers, respectively. \smallskip

To study ideals generated by squarefree monomials of the same degree $r$ means to study facet ideals of pure complexes of dimension $r-1$. By \cite{RTY1} we know that for a pure complex  $\D$ with $\dim \D = 1$, $I(\D)^{(m)}$ is Cohen-Macaulay for some $m \ge 3$ or for every $m \ge 1$ if and only if $\D$ is a union of disjoint 1-uniform matroids, where a matroid is called  $r$-uniform if it is generated by the $r$-dimensional faces of a simplex.  This gives a structure theorem for squarefree monomial ideals generated in degree 2 whose symbolic powers are Cohen-Macaulay. It is natural to ask whether there are similar results for squarefree monomial ideals generated in degree $\ge 3$. We prove that for a pure complex  $\D$ with $\dim \D = 2$, $I(\D)^{(m)}$ is Cohen-Macaulay for some $m \ge 3$ or for every $m \ge 1$ if and only if $\D$ is a union of disjoint 2-uniform matroids (Theorem \ref{2-uniform}), and we show that a similar result doesn't hold for $\dim \D \ge 3$.
\smallskip

A cover ideal $J(\D)$ can be understood as the intersection of prime ideals generated by the variables of the facets of $\D$. It turns out that for $m \ge 3$, $J(\D)^{(m)}$ is Cohen-Macaulay if and only if $\D$ is a matroid (Theorem \ref{cover}) so that the Cohen-Macaulayness of $J(\D)^{(m)}$ is equivalent to that of $I_\D^{(m)}$. This result is somewhat surprising because the Cohen-Macaulayness of $I_\D$ usually has nothing to do with that of $J(\D)$. As a consequence we can say exactly when all symbolic powers of a squarefree monomial ideal of codimension 2 is Cohen-Macaulay. \smallskip

Summing up we can say that our results provide a framework for the study of the Cohen-Macaulayness and other ring-theoretic properties of large ordinary and symbolic powers of squarefree monomial ideals. \smallskip

The paper is organized as follows. In Section 2 we prepare basic facts and properties of simplicial complexes, especially of localizations and matroids. Section 3 and Section 4 deal with the Cohen-Macaulayness of symbolic and ordinary powers of Stanley-Reisner ideals. In Section 5 we study the Cohen-Macaulayness of symbolic powers of facet ideals and cover ideals. For unexplained terminology we refer to \cite{BH} and \cite{St}.

\section{Localizations and matroids}

Throughout this article let $K$ be an arbitrary field and $[n] = \{1,...,n\}$. 
Let $\D$ be a (simplicial) complex on the vertex set $V(\D) = [n]$.
The \textit{Stanley--Reisner ideal} $I_{\Delta}$ of  $\Delta$ (over $K$) is defined as the squarefree monomial ideal 
\[
 I_{\Delta}=\big(x_{i_1} x_{i_2} \cdots x_{i_p}|\ 1 \le i_1 < \cdots < i_p \le n,\; \{i_1,\ldots, i_p\} \notin \Delta \big) 
\]
in the polynomial ring $S = K[x_1,x_2,\dots , x_n]$. Note that the sets $\{i_1,\ldots, i_p\} \notin \D$ are called {\it nonfaces} of $\D$.
It is obvious that this association gives an one-to-one correspondence between simplicial complexes on the vertex set $[n]$ and squarefree monomial ideals in $S$ which do not contain any variable.
\smallskip

We will consider a graph as a complex, and we will always assume that a graph has no multiple edges, no loops and no isolated vertices. \smallskip

For every subset $F \subseteq [n]$, we set $P_F = (x_i|\ i \in F)$. Let $\F(\D)$ denote the set of the facets of $\D$. We have the following prime decomposition 
of $I_{\Delta}$:
\[
 I_{\Delta} = \bigcap_{F \in \mathcal{F}(\Delta)} P_{\overline F},
\]
where $\overline F$ denotes the complement of $F$. \smallskip

To describe the localization of $I_\D$ at a set of variables we need the following notations. 
For every face $G \in \Delta$, we define 
\begin{align*}
\star_{\Delta} G & := \{F \in \Delta \;:\, F \cup G \in \Delta\},\\
\link_{\Delta} G & := \{F \setminus G \in \Delta \;:\, G \subset F \in \Delta  \},
\end{align*}
and call these subcomplexes of $\D$ the $\textit{star}$ of $G$ or the \textit{link} of $G$, respectively. \smallskip

By the definition of the Stanley-Reisner ideal, $I_{\star_{\Delta} G}$ and  $I_{\link_{\Delta} G}$ have the same (minimal) monomial generators though they lie in different polynomial subrings of $S$ if $G \neq \emptyset$.  

\begin{Lemma} \label{local}
Let $Y = \{x_i|\ i \not \in V(\star_{\Delta}G)\}$. Then
$$I_{\Delta}S[x_i^{-1}|\ i \in G]  = (I_{\link_{\Delta}G},Y)S[x_i^{- 1}|\ i \in G].$$
\end{Lemma}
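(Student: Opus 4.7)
The plan is to check the asserted equality by comparing monomial generators on both sides, working inside the localized ring $R := S[x_i^{-1} : i \in G]$. Both sides are monomial ideals in $R$: the left-hand side is generated by the images of the squarefree monomials $\prod_{i \in T} x_i$ for $T$ a (minimal) nonface of $\Delta$, while the right-hand side is generated by $Y$ together with the monomials $\prod_{i \in F} x_i$ for $F$ a nonface of $\link_\Delta G$. It therefore suffices to verify that each set of generators is contained in the other.

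For the inclusion $\supseteq$, I would argue as follows. If $i \notin V(\star_\Delta G)$, then by the definition of the star we have $\{i\} \cup G \notin \Delta$, so $x_i \prod_{j \in G} x_j \in I_\Delta$ and, after inverting the $x_j$ with $j \in G$, we obtain $x_i \in I_\Delta R$; hence $Y \subseteq I_\Delta R$. If $F$ is a nonface of $\link_\Delta G$ (so $F \cap G = \emptyset$ and $F \cup G \notin \Delta$), then $\prod_{i \in F \cup G} x_i \in I_\Delta$, which gives $\prod_{i \in F} x_i \in I_\Delta R$.

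For the inclusion $\subseteq$, take a minimal nonface $T$ of $\Delta$ and set $F := T \setminus G$. Since $G \in \Delta$ forces $T \not\subseteq G$, we have $F \neq \emptyset$, and in $R$ the monomial $\prod_{i \in T} x_i$ is a unit multiple of $\prod_{i \in F} x_i$. Because $T \subseteq F \cup G$ and $\Delta$ is closed under subsets, $F \cup G \notin \Delta$. I then split into two cases: if some $i \in F$ lies outside $V(\star_\Delta G)$, then $x_i \in Y$ and $\prod_{i \in F} x_i \in (Y)$; otherwise $F \subseteq V(\star_\Delta G)$, and combined with $F \cap G = \emptyset$ this places $F$ inside $V(\link_\Delta G)$, so the condition $F \cup G \notin \Delta$ says exactly that $F$ is a nonface of $\link_\Delta G$ and $\prod_{i \in F} x_i \in I_{\link_\Delta G}$.

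The argument is essentially bookkeeping and I do not foresee a genuine obstacle. The one point that requires care is the vertex-set convention: $I_{\link_\Delta G}$ naturally lives in the smaller polynomial subring on the variables indexed by $V(\star_\Delta G) \setminus G$, and one must remember that on the right-hand side it is extended to $R$, which is precisely why the remaining variables must be accounted for separately through $Y$.
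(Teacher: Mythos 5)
Your proof is correct, but it takes a genuinely different route from the paper's. The paper localizes the prime decomposition $I_{\Delta}=\bigcap_{F\in\F(\Delta)}P_{\overline F}$: after inverting the $x_i$ with $i\in G$, the components $P_{\overline F}$ with $G\not\subseteq F$ become the unit ideal, the surviving intersection $\bigcap_{F\in\F(\star_{\Delta}G)}P_{\overline F}$ is identified with $(I_{\star_{\Delta}G},Y)$, and finally $I_{\star_{\Delta}G}$ and $I_{\link_{\Delta}G}$ have the same minimal monomial generators (the observation recorded just before the lemma). You instead work with the generator (nonface) description of both sides and verify the two inclusions directly, with the right case split on a minimal nonface $T$ of $\Delta$: either some vertex of $T\setminus G$ lies outside $V(\star_{\Delta}G)$, so a variable of $Y$ divides the localized generator, or $T\setminus G$ is a nonface of $\link_{\Delta}G$; and conversely you check $Y\subseteq I_{\Delta}S[x_i^{-1}|\ i\in G]$ via the nonfaces $\{i\}\cup G$ and lift nonfaces of the link to nonfaces $F\cup G$ of $\Delta$. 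Both arguments are elementary and complete. The paper's version is shorter because it delegates the bookkeeping to standard facts about the primes $P_{\overline F}$ and the star/link comparison (and in passing records the intermediate equality with $(I_{\star_{\Delta}G},Y)$, which is occasionally useful); yours is more self-contained, never invokes the decomposition into minimal primes, and makes explicit exactly where the variables in $Y$ enter---the vertex-set subtlety you rightly flag, which the paper's ``it is easily seen'' leaves implicit.
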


\begin{proof}
It is easily seen that
\begin{align*}
I_{\Delta}S[x_i^{-1}|\ i \in G] & = \bigcap_{F \in \mathcal{F}(\star_{\Delta}G)} P_{\bar F}S[x_i^{-1}|\ i \in G]\\
& = (I_{\star_{\Delta}G},Y)S[x_i^{-1}|\ i \in G]\\
& = (I_{\link_{\Delta}G},Y)S[x_i^{-1}|\ i \in G].
\end{align*}
\end{proof}

\begin{Remark} \label{link}
{\rm  Let $R = K[x_i|\ i \in V(\link_\D G)]$ and $T = K[x_i|\ x_i \not\in G]$. Then $T = R[Y]$ is a polynomial ring over $R$ and $S[x_i^{-1}|\ i \in G] = T[x_i^{\pm 1}|\ x_i \in G]$ is a Laurent polynomial ring over $T$.
Since $I_{\link_\D G}$ is an ideal in $R$, the variables of $Y$ forms a regular sequence on $I_{\link_\D G}T$. Therefore, properties of $I_\D S[x_i^{-1}|\ i \in G]$ and $I_{\link_\D G}$ are strongly related to each other. So we may think of $I_{\link_\D G}$ as the combinatorial localization of $\D$ at $G$. }
\end{Remark}

For simplicity we say that a simplicial complex $\D$ is a {\it complete intersection} if $I_\D$ is a complete intersection. Combinatorially, this means that the minimal nonfaces of $\D$ are disjoint.  We say that $\D$ is {\it locally a complete intersection} if $\link_\D\{i\}$ is a complete intersection for $i = 1,...,n$.\smallskip 
 
There are the following relationship between complete intersections and locally complete intersections,
which will play an essential role in our investigation on ordinary powers.

 \begin{Lemma} \label{OrdLocal} \cite[Theorem 1.5]{TY}
 Let $\D$ be a simplicial complex with $\dim \D \ge 2$.
 Then $\D$ is a complete intersection if and only if $\D$ is connected and locally a complete intersection.
 \end{Lemma}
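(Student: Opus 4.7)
The plan is as follows. The forward direction is direct: if $I_\D$ is a complete intersection then its minimal squarefree monomial generators form a regular sequence, equivalent to the minimal nonfaces $N_1,\dots,N_r$ of $\D$ being pairwise disjoint. Since complete intersections are Cohen--Macaulay and $\dim\D\ge 1$, this forces $\D$ to be connected. For each vertex $i$, a direct computation from the definition of $\link_\D\{i\}$ shows that its minimal nonfaces are either of the form $N_j\setminus\{i\}$ (for $N_j\ni i$) or $N_j$ itself (for $N_j\not\ni i$ with $N_j\subseteq V(\link_\D\{i\})$); in either event these are subsets of the pairwise disjoint $N_j$, so $\link_\D\{i\}$ is a complete intersection.

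For the reverse direction, assume $\D$ is connected, $\dim\D\ge 2$, and locally a complete intersection. I argue by contradiction, supposing two distinct minimal nonfaces $N_1,N_2$ of $\D$ share at least one vertex. The first case is easy: if $|N_1\cap N_2|\ge 2$, pick any $i\in N_1\cap N_2$; then $N_1\setminus\{i\}$ and $N_2\setminus\{i\}$ are minimal nonfaces of $\link_\D\{i\}$ which still share a vertex, contradicting local CI at $i$. So it suffices to rule out the single-overlap case $N_1\cap N_2=\{i\}$, which is the crux of the argument.

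My approach to the crux is to pass to another link. Pick $j\in N_1\setminus\{i\}$ and examine $\link_\D\{j\}$. If every $\ell\in N_2$ satisfies $\{j,\ell\}\in\D$, then $N_2\subseteq V(\link_\D\{j\})$ is a nonface there containing some minimal nonface $M$. Since $N_1\setminus\{j\}$ is also a minimal nonface of $\link_\D\{j\}$ and contains $i$, local CI at $j$ forces $M\cap(N_1\setminus\{j\})=\emptyset$, so $i\notin M$; this rules out $M$ coming from a minimal nonface of $\D$ not containing $j$ (which would sit strictly inside $N_2$, contradicting the minimality of $N_2$), and hence $N':=M\cup\{j\}$ is a fresh minimal nonface of $\D$ with $|N'\cap N_2|=|M|\ge 2$, reducing to the first case. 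Otherwise some $\{j,\ell\}$ with $\ell\in N_2$ is itself a minimal nonface of $\D$, and a further case split (on whether $\ell=i$, using $\dim\D\ge 2$ to guarantee enough room in the facets through $j$) produces either an immediate double overlap or another swap to iterate, with connectedness of $\D$ ensuring the propagation cannot dead-end.

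The main obstacle is the secondary case analysis when $\{j,\ell\}\notin\D$ for some $\ell\in N_2$: this spawns new $2$-element minimal nonfaces whose interaction with $N_1,N_2$ must be tracked carefully, and it is precisely here that both $\dim\D\ge 2$ and the connectedness of $\D$ become essential---the conclusion genuinely fails in dimension $1$ (for instance, any path on $\ge 4$ vertices is connected and locally a complete intersection but not globally so), so any correct argument must exploit the higher-dimensional hypothesis to supply the auxiliary vertices used in the swap.
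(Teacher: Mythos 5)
You should first be aware that the paper does not prove this lemma at all: it is quoted verbatim from \cite[Theorem 1.5]{TY}, so the only comparison possible is with that reference, where the hard direction occupies a substantial combinatorial argument. Within your proposal, the forward direction is fine, and the first two steps of the reverse direction are genuinely correct: if $N_1,N_2$ are distinct minimal nonfaces with $i\in N_1\cap N_2$, then $N_1\setminus\{i\}$ and $N_2\setminus\{i\}$ are distinct minimal nonfaces of $\link_\D\{i\}$, so local complete intersection forces $|N_1\cap N_2|\le 1$; and your sub-case in which $N_2\subseteq V(\link_\D\{j\})$ does reduce to this (one must also rule out $M=N_1\setminus\{j\}$, which you do not address, but it follows because $i\in N_2$ is then adjacent to $j$ while $M=N_1\setminus\{j\}\subseteq N_2$ would force $N_1=\{i,j\}\notin\D$).

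The genuine gap is your final sub-case, which is not an argument but a promissory note, and it is exactly where the whole content of the lemma sits. What survives your two correct reductions are configurations in which $2$-element minimal nonfaces overlap (e.g.\ $N_1=\{i,j\}$, $N_2=\{i,\ell\}$ with $\{j,\ell\}\notin\D$), and these are precisely the configurations realized by connected, locally complete intersection complexes of dimension $1$ such as the $5$-cycle: there, with $N_1=\{1,3\}$, $N_2=\{1,4\}$, every admissible choice of $j$ lands in your unresolved sub-case, and the ``swap and iterate'' propagation simply cycles forever even though the complex is connected. So ``connectedness ensures the propagation cannot dead-end'' cannot be the mechanism; any correct completion must inject $\dim\D\ge 2$ through a concrete device (producing a $2$-face through the relevant vertices, a purity argument, or an induction on dimension via links, as in the matroid analogue Theorem \ref{SymLocal}), and your sketch never specifies one -- the phrase ``enough room in the facets through $j$'' is not backed by any construction. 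Since this unproven step is the actual theorem of \cite{TY} being cited, the proposal as it stands does not constitute a proof of the reverse direction.
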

 
 \begin{Lemma}\label{LCI}\cite[Theorem 1.15]{TY}
Let $\Delta$ be a simplicial complex with $\dim \D \ge 2$. 
Then $\D$ is locally a complete intersection if and only if $\D$ is a union of disjoint complete intersections.
\end{Lemma}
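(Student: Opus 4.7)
The plan is to prove both implications by passing to the connected components of $\D$ and invoking Lemma \ref{OrdLocal} component by component. The key is that both the locally complete intersection property and the structure of being a disjoint union behave well with respect to this decomposition, because links of vertices only see the component to which they belong.

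For the implication ``disjoint union of complete intersections $\Rightarrow$ locally a complete intersection,'' I would take $\D = \D_1 \cup \cdots \cup \D_k$ with the $V(\D_j)$ pairwise disjoint and each $\D_j$ a complete intersection. Because the vertex sets are disjoint, any face of $\D$ that contains a vertex $i \in V(\D_j)$ lies entirely in $\D_j$, so $\link_\D\{i\} = \link_{\D_j}\{i\}$. The task then reduces to showing combinatorially that the link of a vertex in a complete intersection is a complete intersection: the minimal nonfaces of $\link_{\D_j}\{i\}$ are precisely the minimal nonfaces of $\D_j$ that avoid $i$, together with $N \setminus \{i\}$ for minimal nonfaces $N$ of $\D_j$ containing $i$. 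Since the minimal nonfaces of $\D_j$ are pairwise disjoint, so are those of the link.

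For the converse, suppose $\D$ is locally a complete intersection. I would write $\D = \D_1 \cup \cdots \cup \D_k$ as the disjoint union of its connected components, so the $V(\D_j)$ partition $V(\D)$. Each $\D_j$ is connected by construction, and for $i \in V(\D_j)$ the identity $\link_{\D_j}\{i\} = \link_\D\{i\}$ (any face of $\D$ containing $i$ remains in $\D_j$) shows that $\D_j$ inherits the locally complete intersection property. Provided $\dim \D_j \ge 2$, Lemma \ref{OrdLocal} then gives that each $\D_j$ is itself a complete intersection, and concatenating these yields the desired disjoint decomposition.

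The main obstacle I expect is confirming that every connected component satisfies the dimension hypothesis needed for Lemma \ref{OrdLocal}. The global condition $\dim \D \ge 2$ does not a priori prevent a lower-dimensional component, so one must use the locally complete intersection hypothesis to restrict the combinatorial possibilities for such a component and handle them directly (showing that any low-dimensional component whose vertex links are still complete intersections must itself be a complete intersection). This is where the bulk of the combinatorial work sits, and where the pairwise-disjointness structure enforced by the CI property of each $\link_\D\{i\}$ must be exploited carefully.
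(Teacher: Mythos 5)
The paper offers no proof of this lemma at all---it is quoted from \cite[Theorem 1.15]{TY}---so your argument can only be judged on its own merits. The easy direction is essentially fine: for $i\in V(\D_j)$ one has $\link_\D\{i\}=\link_{\D_j}\{i\}$, and every minimal nonface of this link has the form $M$ (when $i\notin M$) or $M\setminus\{i\}$ (when $i\in M$) for a minimal nonface $M$ of $\D_j$ contained in the vertex set of the link; your description ``precisely the minimal nonfaces avoiding $i$ together with $N\setminus\{i\}$'' is slightly off, since minimal nonfaces of $\D_j$ meeting the complement of the star of $i$ must be discarded, but pairwise disjointness is inherited all the same.

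The converse is where the genuine gap lies, and it sits exactly where you flagged it---except that the repair you propose is impossible. A connected $1$-dimensional complex is locally a complete intersection as soon as every vertex has degree at most $2$ (its vertex links are at most two points), yet a path with at least three edges or a cycle of length at least five is not a complete intersection. So the claim that a low-dimensional component whose vertex links are complete intersections must itself be a complete intersection is false, and with the paper's definitions the ``only if'' direction fails as stated: take $\D$ to be the disjoint union of the $2$-simplex on $\{1,2,3\}$ and the $5$-cycle on $\{4,\dots,8\}$; then $\dim\D=2$ and every vertex link is either a $1$-simplex or two isolated points, hence a complete intersection, but the $5$-cycle component is not a complete intersection, so $\D$ admits no decomposition into disjoint complete intersections. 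The missing ingredient is purity, exactly as in the matroid analogue (Corollary \ref{local matroid}): if $\D$ is pure of dimension $\ge 2$, then every connected component is pure of dimension $\dim\D\ge 2$, and your component-wise appeal to Lemma \ref{OrdLocal} finishes the proof at once. Purity is in force at every point where the paper invokes Lemma \ref{LCI} (it is established before the lemma is applied in Theorem \ref{OrdFLC}), so the Terai--Yoshida statement should be read with that hypothesis; without it, no direct analysis of low-dimensional components can close your argument, because the statement you would need about them is simply not true.
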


Now we want to prove similar results for matroids. 
Recall that a {\em matroid} is a collection of subsets of a finite set, called  independent sets, with the following properties: \smallskip

(i) The empty set is independent. \par
(ii) Every subset of an independent set is independent. \par
(iii) If $F$ and $G$ are two independent sets and $F$  has more elements than $G$, then there exists an element in $F$ which is not in $G$ that when added to $G$ still gives an independent set. 
\smallskip

We may consider a matroid as a simplicial complex.
It is easy to see that induced subcomplexes, stars and links of faces of matroids are again matroids.
Moreover, every matroid is a pure complex, that is, all facets have the same dimension. \smallskip

We shall need the following criterion for a simplicial complex to be a matroid.

\begin{Lemma} \label{matroid} \cite[Theorem 39.1]{Sch}
A simplicial complex $\Delta$ is a matroid if and only if for any pair of faces 
$F, G$  of $\Delta $ with $\vert F\setminus G \vert =1$ 
and $\vert G\setminus F \vert =2$, there is a vertex $i \in G\setminus F $
with $F \cup \{i\}  \in \Delta $.
\end{Lemma}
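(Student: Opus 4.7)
\bigskip

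\noindent\textbf{Proof proposal.}
The forward direction is immediate: in a matroid, whenever $|F|<|G|$ there is $i\in G\setminus F$ with $F\cup\{i\}\in\Delta$, and the hypothesis of the lemma is just the special case $|F\setminus G|=1$, $|G\setminus F|=2$ (where indeed $|F|<|G|$). The content is the converse, so I focus on it. The plan is to upgrade the very narrow ``local'' exchange of the hypothesis to the full exchange axiom (iii), using the subset-closure axiom (ii) throughout.

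\emph{First reduction.} Given any $F,G\in\Delta$ with $|F|<|G|$, one may choose $G'$ with $F\cap G\subseteq G'\subseteq G$ and $|G'|=|F|+1$; then $G'\in\Delta$ and $G'\setminus F\subseteq G\setminus F$, so it suffices to produce the exchange element for the pair $(F,G')$. Hence we may assume $|G|=|F|+1$; writing $k=|F\setminus G|$, we then have $|G\setminus F|=k+1$.

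\emph{Induction on $k$ (equivalently, on the symmetric difference $|F\triangle G|=2k+1$).} For $k=0$ the containment $F\subsetneq G$ makes any $i\in G\setminus F$ work. For $k=1$ this is exactly the local hypothesis. For $k\ge2$, I would pick $j\in F\setminus G$ and set $F_j:=F\setminus\{j\}\in\Delta$. Then choose $G'_j\subseteq G$ containing $F\cap G$ with $|G'_j|=|F|$; a short count gives $|F_j\setminus G'_j|=k-1$ and $|G'_j\setminus F_j|=k$, so $|F_j|<|G'_j|$ and $|F_j\triangle G'_j|=2k-1$. The induction hypothesis, applied twice, then gives:
\begin{enumerate}
\item[(a)] some $i\in G'_j\setminus F_j\subseteq G\setminus F$ with $F'':=F_j\cup\{i\}\in\Delta$;
\item[(b)] looking now at $(F'',G)$, which satisfies $|F''|=|F|<|G|$ and $|F''\triangle G|=2k-1$, some $i'\in G\setminus F''=(G\setminus F)\setminus\{i\}$ with $H:=F''\cup\{i'\}=(F\setminus\{j\})\cup\{i,i'\}\in\Delta$.
\end{enumerate}
By construction $|F\setminus H|=|\{j\}|=1$ and $|H\setminus F|=|\{i,i'\}|=2$, and both $i,i'\in G\setminus F$. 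The local hypothesis applied to $(F,H)$ therefore delivers an $a\in\{i,i'\}\subseteq G\setminus F$ with $F\cup\{a\}\in\Delta$, completing the inductive step.

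The main obstacle is the narrowness of the local condition: it only triggers when the symmetric difference is exactly of shape $(1,2)$, so one cannot apply it directly for larger $k$. The device above is to use the induction hypothesis twice to enlarge $F\setminus\{j\}$ back to a face $H$ of size $|F|+1$ whose ``new'' elements both come from $G\setminus F$; this is precisely what manufactures a pair $(F,H)$ of the type the hypothesis can act on, while guaranteeing the resulting exchange element lies in $G\setminus F$.
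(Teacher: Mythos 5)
Your argument is correct, and there is nothing internal to compare it with: the paper gives no proof of this lemma, quoting it directly from \cite[Theorem 39.1]{Sch}, so your write-up is a sound self-contained derivation of the cited fact. The forward direction is indeed just the exchange axiom specialized to $|G|=|F|+1$. For the converse, the reduction to $|G|=|F|+1$ (shrinking $G$ to a set $G'$ of size $|F|+1$ containing $F\cap G$) is valid, and the induction on $k=|F\setminus G|$ goes through: for $k\ge 2$ the two applications of the inductive hypothesis at parameter $k-1$, first to $(F\setminus\{j\},G'_j)$ and then to $(F'',G)$, produce a face $H=(F\setminus\{j\})\cup\{i,i'\}$ with $i,i'\in G\setminus F$, and the pair $(F,H)$ has exactly the shape $|F\setminus H|=1$, $|H\setminus F|=2$ on which the local hypothesis acts, yielding an exchange element inside $G\setminus F$. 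The bookkeeping checks out: $|F_j\setminus G'_j|=k-1$ and $|G'_j\setminus F_j|=k$ because $F_j\cap G'_j=F\cap G$; $|F''\setminus G|=k-1$ and $|G\setminus F''|=k$; and $j\notin G$ guarantees $G'_j\setminus F_j\subseteq G\setminus F$ and $G\setminus F''=(G\setminus F)\setminus\{i\}$, so both new vertices really lie in $G\setminus F$. The auxiliary sets $G'$ and $G'_j$ exist since $|F\cap G|=|F|-k$ and $|G\setminus F|=k+1$. The only points left implicit, namely that axioms (i) and (ii) of a matroid are automatic for a (nonvoid) simplicial complex and that all subsets of faces used along the way are again faces, are harmless.
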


For a graph this characterization can be reformulated as follows.

\begin{Corollary} \label{matroid graph}
 A graph $\Gamma$ without isolated vertices is a matroid if and only if every pair of disjoint edges is contained in a 4-cycle. 
\end{Corollary}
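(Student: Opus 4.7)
The plan is to apply Lemma~\ref{matroid} and translate its criterion into a statement about graphs. Since a graph has only vertices and edges as faces, the conditions $|F\setminus G|=1$ and $|G\setminus F|=2$ force $|F|=1$, $|G|=2$ and $F\cap G=\emptyset$, so $F=\{v\}$ is a vertex and $G=\{a,b\}$ is an edge disjoint from $v$. Lemma~\ref{matroid} therefore specializes to the following graph criterion: $\Gamma$ is a matroid if and only if for every edge $\{a,b\}$ and every vertex $v\notin\{a,b\}$, at least one of $\{v,a\}$, $\{v,b\}$ is again an edge. I would then prove both directions of the corollary by comparing this specialized criterion with the 4-cycle condition.

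For the ``if'' direction, I would assume every pair of disjoint edges lies in a 4-cycle, fix an edge $\{a,b\}$ and a vertex $v\notin\{a,b\}$, and invoke the no-isolated-vertex hypothesis to put $v$ on some edge $e$. If $e$ already shares a vertex with $\{a,b\}$, the specialized criterion holds immediately; otherwise $e$ and $\{a,b\}$ are disjoint and therefore sit in a 4-cycle, whose remaining two edges must connect $v$ to $a$ or to $b$.

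For the ``only if'' direction, I would assume $\Gamma$ is a matroid and fix disjoint edges $e_1=\{a,b\}$ and $e_2=\{c,d\}$. A 4-cycle through both exists precisely when either the pair $\{a,c\},\{b,d\}$ or the pair $\{a,d\},\{b,c\}$ consists entirely of edges. If neither pair does, then at least one edge is absent from each pair, giving four subcases. In every subcase two missing edges share a common endpoint (for example $a$, when $\{a,c\}$ and $\{a,d\}$ are both absent), so applying the specialized criterion with $F$ equal to that endpoint and $G$ equal to the opposite edge of $\{e_1,e_2\}$ yields an immediate contradiction.

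The main obstacle I anticipate is organizing the four subcases in the ``only if'' direction so that each uniformly defeats the specialized criterion; beyond this, once Lemma~\ref{matroid} has been reduced to the one-dimensional setting the argument is a routine bookkeeping exercise.
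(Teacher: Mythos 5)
Your proposal is correct and takes essentially the same approach as the paper: both specialize Lemma~\ref{matroid} to the criterion that every vertex must be adjacent to an endpoint of any edge not containing it, invoke the no-isolated-vertex hypothesis at the same point in the ``if'' direction, and deduce the 4-cycle equivalence from that criterion. Your four-subcase bookkeeping in the ``only if'' direction is just a slightly more explicit organization of the paper's own case analysis.
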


\begin{proof}
Assume that $\G$ is a matroid. By Theorem \ref{matroid}, every vertex $i$ is connected to every edge $\{u,v\}$  by an edge (of $\G$). Let $\{i,j\}$ and $\{u,v\}$ be two disjoint edges.
We may assume that $\{i,u\}$ is an edge.  If $\{j,v\}$ is an edge, then $i,u,v,j$ are the ordered vertices of a 4-cycle of $G$. If $\{j,v\}$ is not an edge, then $\{j,u\}$ must be an edge. Since either $i$ or $j$ is connected with $v$ by an edge, we conclude that $\{i,j\}$ and $\{u,v\}$ are always contained in a 4-cycle.\par

Conversely, assume that every pair of disjoint edges is contained in a 4-cycle. Let $i$ be an arbitrary vertex and $\{u,v\}$ an edge not containing $i$. If $i$ isn't connected with $\{u,v\}$ by an edge, there is an edge $\{i,j\}$ such that $j \neq u,v$. But then $\{i,j\}$ and $\{u,v\}$ are contained in a 4-cycle so that $i$ is connected with $\{u,v\}$ by an edge, which is a contradiction.
\end{proof}

We say that $\Delta$  is  \textit{locally a matroid} if $\link_{\Delta}\{i\}$ is a matroid for every vertex $i$ of $\D$. This notion will play an essential role in our investigation on symbolic powers. 

\begin{Theorem}\label{SymLocal}
Let $\Delta$ be  a simplicial complex with $\dim \Delta \ge 2$.
Then $\Delta$ is a matroid if and only if $\Delta$ is connected and locally a matroid. 
\end{Theorem}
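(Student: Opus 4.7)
The forward direction is classical matroid theory: the three axioms pass to the link, so $\link_{\Delta}\{v\}$ is a matroid for every vertex $v$, and basis exchange connects any two facets of $\Delta$ through a sequence of facets, each differing from the next by exactly one vertex. Since $\dim \Delta \ge 2$, each facet is a simplex all of whose vertices are pairwise joined by edges of $\Delta$, and consecutive facets in the sequence share at least one vertex; chaining these together produces a path in the $1$-skeleton between any two vertices, so $\Delta$ is connected.

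For the converse I use the characterization in Lemma~\ref{matroid}: given $F, G \in \Delta$ with $|F \setminus G| = 1$ and $|G \setminus F| = 2$, I must find $i \in G \setminus F$ with $F \cup \{i\} \in \Delta$. If $F \cap G$ contains a vertex $v$, then $F \setminus \{v\}$ and $G \setminus \{v\}$ are faces of the matroid $\link_{\Delta}\{v\}$ and its exchange axiom yields $i$. This reduces the problem to the case $F \cap G = \emptyset$, namely $F = \{a\}$ and $G = \{b, c\}$ with $a, b, c$ distinct and $\{b, c\} \in \Delta$, where the goal becomes: show $\{a, b\} \in \Delta$ or $\{a, c\} \in \Delta$.

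I first observe that $\Delta$ is pure: since each $\link_{\Delta}\{v\}$ is a pure matroid, all facets of $\Delta$ through $v$ share dimension $\dim \link_{\Delta}\{v\} + 1$; for any edge $\{u, v\} \in \Delta$ this common value at $u$ equals that at $v$, and $1$-skeleton connectedness propagates a single common facet dimension to all of $\Delta$. In particular, $\{b, c\}$ extends to a $2$-face $\{b, c, u\} \in \Delta$ for some vertex $u$.

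I then induct on the $1$-skeleton distance $d = d(a, \{b, c\})$. The base $d = 1$ is immediate. For $d \ge 2$, choose a shortest path $a = v_0, v_1, \ldots, v_d = b$ (after swapping $b$ and $c$ if necessary). The induction hypothesis applied to $v_1$ produces $\{v_1, b\} \in \Delta$ or $\{v_1, c\} \in \Delta$; by symmetry I assume the former. A cascade of three local matroid exchanges then concludes the proof: an exchange in $\link_{\Delta}\{b\}$ applied to the faces $\{v_1\}$ and $\{c, u\}$ forces $\{b, v_1, c\} \in \Delta$ or $\{b, v_1, u\} \in \Delta$; a second exchange in $\link_{\Delta}\{v_1\}$ applied to $\{a\}$ and the corresponding edge $\{b, c\}$ or $\{b, u\}$ in that link produces one of $\{a, b\}, \{a, c\}, \{a, u\}$ as a face of $\Delta$; and in the last (only nontrivial) subcase, a final exchange in $\link_{\Delta}\{u\}$ applied to $\{a\}$ and $\{b, c\}$, legitimate because $\{b, c, u\} \in \Delta$, yields $\{a, b\}$ or $\{a, c\}$ in $\Delta$. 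The main obstacle is bookkeeping: verifying at each exchange that the vertices involved are pairwise distinct and lie in the required link. This is dispatched by short case checks using the shortest-path hypothesis (so $v_1 \notin \{b, c\}$ because $d \ge 2$) and by short-circuiting the cascade whenever a coincidence such as $u = v_1$ or $u = a$ already produces an edge containing $a$.
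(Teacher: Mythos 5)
Your proof is correct, and its skeleton matches the paper's: necessity from the definition, purity deduced from the pure links plus connectedness, reduction via Lemma~\ref{matroid} to the case of a vertex $\{a\}$ against a disjoint edge $\{b,c\}$ by exchanging inside the link of a common vertex, and a finish that uses $\dim\D\ge 2$ only through extending $\{b,c\}$ to a triangle $\{b,c,u\}$. Where you genuinely diverge is the finish. The paper first proves that the $1$-skeleton has diameter at most $2$ (via Corollary~\ref{matroid graph}: in the star of a vertex every pair of disjoint edges lies in a $4$-cycle) and then disposes of the distance-$2$ configuration by a contradiction argument with two auxiliary vertices $t,s$ and two link exchanges, under the running assumption that $\{a\}$ and $\{b,c\}$ never lie in a common link. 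You instead induct on the $1$-skeleton distance $d(a,\{b,c\})$ and run a cascade of at most three exchanges in $\link_\D\{b\}$, $\link_\D\{v_1\}$ and $\link_\D\{u\}$; this bypasses both the diameter-$2$ lemma and Corollary~\ref{matroid graph}, at the price of the degenerate-case bookkeeping you flag. I checked the cascade and it works: $v_1\ne b,c$ because $d\ge 2$, $u\ne b,c$ by choice, and the symmetry in $b,c$ you invoke is legitimate since the shortest path is only used to supply a neighbour $v_1$ of $a$ with $v_1\notin\{b,c\}$ and smaller distance to $\{b,c\}$. One small correction: the coincidence $u=v_1$ does not by itself ``produce an edge containing $a$''; it says $\{b,c,v_1\}\in\D$, so you should skip the first exchange and pass directly to the exchange in $\link_\D\{v_1\}$ applied to $\{a\}$ and $\{b,c\}$ --- a harmless adjustment, not a gap. (The case $u=a$ is indeed immediate, since then $\{a,b\}\in\D$.)
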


\begin{proof}
The necessity can be easily seen from the definition of matroids. 
To prove the sufficiency assume that $\Delta$ is connected and locally a matroid. 
\par

We show first that $\D$ is pure. Let $\dim \D = d$.
Assume for the contrary that there is a facet $F$ with $\dim F < d$.
Since $\link_\D\{v\}$ is a matroid for every vertex $v$ and since every matroid is pure, no vertex $v$ of $F$ is contained in a facet $G$ with $\dim G = d$.
Let $\Gamma$ be the graph of  the one-dimensional faces of $\D$. Let $r$ be the minimal length of a path of $\Gamma$ which connects a vertex of $F$ with a vertex of a facet $G$ with $\dim G = d$. Then $r \ge 1$. Let $v_0,...,v_r$ be the ordered vertices of such a path. Let $H$ be a facet containing the edge $\{v_{r-1},v_r\}$. Since $G \setminus \{v_r\}$ and $H \setminus \{v_r\}$ are facets of  the matroid $\link_\D\{x_r\}$, they have the same dimension. Therefore,
$\dim H = \dim G = d$. Since $x_{r-1}$ is a vertex of $H$, we obtain a contradiction to the minimality of $r$.\par

Next we show that two arbitrary vertices $u,v$ are connected by a path of $\Gamma$ of length at most 2.
For that we only need to prove that if $u, v, w,t$ are the ordered vertices of a path of length 3,
then $u$ and $t$ are connected by a path of length 2.
We may assume that $\{u, w\}, \{v, t\} \not\in \Delta$. Since $\Delta $ is pure and   $\dim \Delta \ge 2$,
there is another vertex $s$ such that $\{v,w, s\} \in \Delta$.
Therefore, $\{w, s\} \in \link_{\Delta}\{v\}$.
Since $\link_{\Delta}\{v\}$ is a matroid, so is $\star_{\Delta}\{v\}$.
Therefore,  the graph of the one-dimensional faces of $\star_{\Delta}\{v\}$ is also a matroid. 
By Corollary \ref{matroid graph}, every pair of disjoint edges of $\star_{\Delta}\{v\}$ is contained in a 4-cycle.  Since $\{u, w\} \not\in \Delta$, we must have $\{u, s\} \in \Delta$.
Similarly, we also have  $\{s,t\} \in \Delta$. Hence
$\{u, s\} , \{s, t\} $ form a path of length 2. \par

By Theorem \ref{matroid}, to show that $\Delta$ is a matroid we only need to show that if 
$F$ and $G$ are two faces of $\Delta $ with $\vert F\setminus G \vert =1$ 
and $\vert G\setminus F \vert =2$, then there is a vertex $i \in G\setminus F $
with $F \cup \{i\}  \in \Delta $.
Since $\Delta $ is locally a matroid,
we may assume that $F$ and $G$ are not faces of the link of any vertex. From this it follows that $F \cap G = \emptyset$. Hence $\vert F \vert =1$ and $\vert G \vert =2$. \par

Let $F=\{u\}$ and $G=\{v,w\}$. 
Assume for the contrary that $ \{u, v\}, \{u,w\} \not\in \Delta $.
Since $u,v$ are connected by a path of length 2, 
there is a vertex $t$ such that $ \{u, t\}, \{v, t\} \in \Delta $. 
Since $\{u\} \in \link_\D\{t\}$, $\{v,w\} \not\in \link_\D\{t\}$ by our assumption on $F$ and $G$. This implies $\{t,w\} \not\in \link_\D\{v\}$.
Let $s$ be a vertex of a facet of $\D$ containing $\{t,v\}$.
Since $\{w\}, \{s,t\} \in \link_\D\{v\}$ and since $\link_{\Delta}\{v\} $ is 
a matroid, we must have $\{s,w\} \in \link_{\Delta}\{v\}$ by Theorem \ref{matroid}. This implies $\{v,w\} \in \link_\D\{s\}$. Hence $\{u\} \not\in  \link_\D\{s\}$ by the assumption on $F$ and $G$. On the other hand, since $ \{u\},  \{s,v\} \in \link_{\Delta}\{t\} $ and since $\link_{\Delta}\{t\} $ is a matroid, we must have  $\{s,u\} \in \link_{\Delta}\{t\}$, which implies $\{u\} \in \link_\D\{s\}$,  a contradiction.
\end{proof}

\begin{Corollary} \label{local matroid}
Let $\Delta$ be a pure simplicial complex with $\dim \Delta \ge 2$. Then $\Delta$ is locally a matroid if and only if $\Delta$ is a union of disjoint matroids.
\end{Corollary}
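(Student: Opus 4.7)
My plan is to deduce this corollary directly from Theorem \ref{SymLocal} by decomposing $\Delta$ into its connected components.

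The reverse implication is immediate: if $\Delta = \Delta_1 \sqcup \cdots \sqcup \Delta_k$ where each $\Delta_j$ is a matroid, then any vertex $i$ lies in exactly one $\Delta_j$, and since no face of $\Delta$ containing $i$ can meet any other component, we have $\link_\Delta\{i\} = \link_{\Delta_j}\{i\}$, which is a matroid because links of matroids are matroids.

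For the forward direction, assume $\Delta$ is locally a matroid, and write $\Delta = \Delta_1 \cup \cdots \cup \Delta_k$ as the disjoint union of its connected components. The plan is to check that each $\Delta_j$ satisfies the hypothesis of Theorem \ref{SymLocal}, i.e., is connected, of dimension at least $2$, and locally a matroid; then the theorem gives at once that each $\Delta_j$ is a matroid, which is the required conclusion. Connectedness holds by construction. For a vertex $i$ of $\Delta_j$, every face of $\Delta$ containing $i$ lies in $\Delta_j$, so $\link_\Delta\{i\} = \link_{\Delta_j}\{i\}$; the former is a matroid by hypothesis, so $\Delta_j$ is locally a matroid.

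The only point that needs a small verification is the dimension condition. Because $\Delta$ is pure with $\dim \Delta = d \ge 2$, every facet of $\Delta$ has dimension $d$, so in particular $\Delta$ has no isolated vertices. Hence every vertex of $\Delta_j$ belongs to a $d$-dimensional facet of $\Delta$, which necessarily lies in $\Delta_j$; this forces $\dim \Delta_j = d \ge 2$. This is the only place where purity and the assumption $\dim \Delta \ge 2$ are used, and it is also the main (mild) obstacle: without purity one could have a low-dimensional component of $\Delta$ on which Theorem \ref{SymLocal} does not apply. With the dimension confirmed, Theorem \ref{SymLocal} applies to each $\Delta_j$ and completes the proof.
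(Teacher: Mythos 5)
Your proof is correct and follows essentially the same route as the paper: split $\Delta$ into connected components, note that purity forces each component to have dimension $\dim\Delta\ge 2$, observe that links at vertices are computed within the component, and apply Theorem \ref{SymLocal} componentwise (with the converse being the standard fact that links of matroids are matroids). Your explicit verification of the dimension condition is exactly the point where the paper also invokes purity, so there is nothing to add.
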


\begin{proof}
Assume that $\D$ is locally a matroid. Let $\G$ be a connected component of $\D$. Since $\D$ is pure, $\dim \G = \dim \D \ge 2$. For any vertex $v$ of $\G$ we have $\link_\G v = \link_\D v$. Hence $\G$ is also locally a matroid. By Theorem \ref{SymLocal}, this implies that $\G$ is a matroid.\par
Conversely, let $\Delta$ be a union of disjoint matroids. For any vertex $v$ of $\D$ we have $\link_\D v = \link_\G v$, where $\G$ is the connected component containing $v$. Since $\G$ is a matroid, $\link_\G v$ is a matroid. Hence $\D$ is locally a matroid.
\end{proof}

Theorem \ref{SymLocal} and Corollary \ref{local matroid} don't hold if $\dim \D = 1$. In fact, any graph is locally a matroid but there are plenty connected graphs which aren't matroids.

\section{Cohen-Macaulayness of large symbolic powers}

Let $\D$ be a simplicial complex on the vertex set $V(\D) = [n]$.
For every number $m \ge 1$, the $m$-th symbolic power of $I_{\Delta}$ is  the ideal
\[
 I_{\Delta}^{(m)} = \bigcap_{F \in \mathcal{F}(\Delta)} P_{\bar F}^{m}.   
\]

By \cite[Theorem 3.5]{MiT2} or \cite[Theorem]{Va},  $I_{\Delta}^{(m)}$ is Cohen-Macaulay for every $m \ge 1$ if and only if $\D$ is a matroid. We will show in this section that $\D$ is a matroid if $I_{\Delta}^{(m)}$ satisfies some weaker property than the Cohen-Macaulayness.
The idea comes from Theorem \ref{SymLocal} which says that a complex is a matroid  if and only if it is locally a matroid and connected. The property of being locally a matroid can be passed to the one-dimensional case where everything is known \cite{MiT1}. Therefore, it remains to study  the connectedness of $\D$. For that we need the following result of Takayama on local cohomology modules of monomial ideals. \smallskip

Let $I$ be a monomial ideal of $S = K[x_1,...,x_n]$. Since $S/I$ has a natural $\NN^n$-graded structure, its local cohomology modules $H_{\frk m}^i(S/I)$ with respect to the ideal ${\frk m}=(x_1,\ldots,x_n)$ have an $\ZZ^n$-graded structure. For every $\a \in \ZZ^n$ let $[H_{\frk m}^i(S/I)]_{{\bf a}}$ denote the $\a$-component of $H_{\frk m}^i(S/I)$.

\begin{Lemma} \label{Takayama} \cite[Theorem 2.2]{Ta}  
There is a simplicial complex $\D_\a$ such that  
$$[H_{\frk m}^i(S/I)]_{{\bf a}} \cong \widetilde{H}^{i-1}(\Delta_{\bf a}, K),$$
where $\widetilde{H}^{i-1}(\Delta_{\bf a}, K)$ is the $(i-1)$th reduced cohomology group of $\D_a$ with coefficients in $K$.
\end{Lemma}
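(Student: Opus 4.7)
The plan is to compute $H_\mm^i(S/I)$ via the $\ZZ^n$-graded \v{C}ech complex on $x_1,\ldots,x_n$ and extract its $\a$-graded piece for each $\a\in\ZZ^n$. Concretely, let $C^\bullet$ be the \v{C}ech complex of $S/I$ with $C^i=\bigoplus_{|\sigma|=i}(S/I)_{x^\sigma}$ and differentials given by signed sums of the natural localization maps. Because $I$ is monomial, every term and every differential is $\ZZ^n$-graded, so $H_\mm^i(S/I)\cong H^i(C^\bullet)$ inherits a $\ZZ^n$-grading with $[H_\mm^i(S/I)]_\a=H^i([C^\bullet]_\a)$, and the question reduces to computing the cohomology of the $K$-linear complex $[C^\bullet]_\a$.

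The key combinatorial step is to identify $[C^\bullet]_\a$, up to an index shift, with the reduced cochain complex of an explicit simplicial complex $\D_\a$. For each $\sigma\subseteq[n]$, a $K$-basis of $(S/I)_{x^\sigma}$ is given by the classes of monomials $x^\b$ with $b_i\ge 0$ for $i\notin\sigma$ and $b_i\in\ZZ$ for $i\in\sigma$. Hence $[(S/I)_{x^\sigma}]_\a$ vanishes unless $G_\a:=\{i: a_i<0\}\subseteq\sigma$, and is otherwise at most one-dimensional, spanned by the class of $x^\a$. I would then define $\D_\a$ to consist of those $\tau\subseteq[n]\setminus G_\a$ for which $[(S/I)_{x^{\tau\cup G_\a}}]_\a\neq 0$; since $I\cdot S_{x^{\sigma'}}\subseteq I\cdot S_{x^\sigma}$ whenever $\sigma'\subseteq\sigma$, this collection is closed under passing to subsets and is therefore a genuine simplicial complex. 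The $\a$-component of each \v{C}ech differential, acting on one-dimensional $K$-pieces indexed by $\sigma=\tau\cup G_\a$, then matches the reduced simplicial coboundary of $\D_\a$ up to an index shift by $|G_\a|$. I would absorb this shift into $\D_\a$ itself, for instance by joining with the simplex on $G_\a$, or, in the squarefree case, by working with $\link_\D G_\a$ and recovering Hochster's formula. Taking cohomology yields $[H_\mm^i(S/I)]_\a\cong\widetilde H^{i-1}(\D_\a;K)$.

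The main obstacle will not be the homological algebra, which is essentially formal once the framework is set up, but the combinatorial bookkeeping: choosing a single definition of $\D_\a$ that is uniformly valid in $\a$, manifestly closed under taking subsets, and under which the \v{C}ech coboundary agrees sign-for-sign with the simplicial coboundary so that no residual index shift survives on the right-hand side. Once $\D_\a$ is pinned down, the lemma follows from a term-by-term identification of cochain complexes.
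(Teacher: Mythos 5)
There is no proof in the paper to compare against: Lemma \ref{Takayama} is quoted from Takayama \cite[Theorem 2.2]{Ta}, with the description of $\D_\a$ imported from \cite[Lemma 1.2]{MiT2}. Your $\ZZ^n$-graded \v{C}ech strand argument is the standard route to this result, and most of it is sound: $[C^\bullet]_\a$ has one-dimensional components exactly at those $\sigma\supseteq G_\a$ with $x^\a\notin IS[x_i^{-1}\mid i\in\sigma]$, the family $\{\sigma\setminus G_\a\}$ is closed under subsets (your localization argument for this is fine), and the strand is the reduced simplicial cochain complex of your $\D_\a$ placed in shifted cohomological degree.

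The genuine gap is in how you eliminate that shift. What the computation actually yields is $[H_\mm^i(S/I)]_\a\cong \widetilde H^{\,i-|G_\a|-1}(\D_\a;K)$, and with the $\D_\a$ you defined (which is exactly the complex described after the lemma) this shifted version is the correct statement; the unshifted isomorphism is false whenever $G_\a\neq\emptyset$ (e.g.\ $S=K[x,y]$, $I=(x)$, $\a=(0,-1)$ gives $\D_\a=\{\emptyset\}$, so $\widetilde H^{i-1}(\D_\a)=0$ while $[H^1_\mm(S/I)]_\a\cong K=\widetilde H^{-1}(\D_\a)$). Your proposed fix, ``absorb the shift by joining with the simplex on $G_\a$,'' cannot work: joining with a full simplex on a nonempty vertex set produces a cone, which annihilates all reduced cohomology rather than shifting it. A join shifts cohomology only when the second factor is cohomologically a sphere, so the existence statement of the lemma can be rescued by joining $\D_\a$ with, say, the boundary complex of a simplex on $|G_\a|+1$ vertices (a $(|G_\a|-1)$-sphere) and using the K\"unneth formula for joins; alternatively one states the formula with exponent $i-|G_\a|-1$, as Takayama and \cite{MiT2} do. Passing to $\link_\D G_\a$ in the squarefree case likewise reproduces Hochster's formula \emph{with} the $|G_\a|$ shift, so it does not remove it either. (In this paper the formula is only applied at $\a=\0$ and $\a=\e$, where $G_\a=\emptyset$ and the shift is invisible, which is why the unshifted form suffices for the authors; but as a proof of the lemma in the stated generality, your final step does not go through.)
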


By \cite[Lemma 1.2]{MiT2} the complex $\D_\a$ can be described as follows. For $\a = (a_1,...,a_n)$ let $G_\a = \{i|\ a_i < 0\}$ and $x^\a = x_1^{a_1}\cdots x_n^{a_n}$.
Then $\D_\a$ is the complex of all sets of the form $F \setminus G_\a$, where $F$ is a subset of $[n]$ containing $G_\a$ such that $x^\a \not\in IS[x_i^{-1}|\ i \in F]$.
\smallskip

\begin{Example} \label{D0}
{\rm For $\a = \0$ we have $G_\0 = \emptyset$ and $x_\0 = 1$. Hence
$\D_\0$ is the complex of all $F \subseteq [n]$ such that $1 \not\in IS[x_i^{-1}|\ i \in F]$ or, equivalently, $1 \not\in \sqrt{I}S[x_i^{-1}|\ i \in F]$. Since $\sqrt{I}$ is a squarefree monomial ideal, we can find a complex $\D$
such that 
$$\sqrt{I} = I_\D = \bigcap_{G \in \F(\D)}P_{\overline G}.$$
Thus, $F \in \D_0$ if and only if $F \cap \overline G = \emptyset$ for all $G \in \F(\D)$. Hence $\D_\0 = \D$. }
\end{Example}

\begin{Lemma}\label{S2}
Let $\Delta$ be  a simplicial complex with $\dim \Delta \ge 1$ and $I$ a monomial ideal with $\sqrt{I} = I_\D$.
Then $\Delta$ is connected if $\depth S/I \ge 2$. 
\end{Lemma}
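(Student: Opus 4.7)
The plan is to connect the depth hypothesis to a topological vanishing statement about $\Delta$ via Takayama's formula at the degree $\mathbf{a} = \mathbf{0}$, and then read off connectedness from the vanishing of $\widetilde{H}^{0}(\Delta, K)$.

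First I would use the standard fact that $\depth S/I \ge 2$ is equivalent to the vanishing of the lowest two local cohomology modules, $H_{\mathfrak m}^{0}(S/I) = H_{\mathfrak m}^{1}(S/I) = 0$. In particular $[H_{\mathfrak m}^{1}(S/I)]_{\mathbf 0} = 0$. Since $S/I$ is $\ZZ^{n}$-graded and $I$ is a monomial ideal, I may apply Lemma \ref{Takayama} to the component of degree $\mathbf a = \mathbf 0$, obtaining
\[
 \widetilde{H}^{0}(\Delta_{\mathbf 0}, K) \iso [H_{\mathfrak m}^{1}(S/I)]_{\mathbf 0} = 0.
\]

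Next I would identify the relevant complex. By Example \ref{D0}, the subcomplex $\Delta_{\mathbf 0}$ depends only on $\sqrt{I}$, and in fact $\Delta_{\mathbf 0} = \Delta$, since $\sqrt{I} = I_{\Delta}$. Combining this with the previous step yields $\widetilde{H}^{0}(\Delta, K) = 0$.

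Finally, I would translate this back into combinatorics: the reduced $0$-th cohomology of a non-empty simplicial complex vanishes if and only if the complex is connected. Because $\dim \Delta \ge 1$, $\Delta$ has at least one edge and is in particular non-empty, so $\widetilde{H}^{0}(\Delta, K) = 0$ forces $\Delta$ to be connected. There is no real obstacle here; the only thing to verify carefully is the identification $\Delta_{\mathbf 0} = \Delta$ in Example \ref{D0}, but this has been recorded explicitly just above the statement.
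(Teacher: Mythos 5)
Your proposal is correct and follows essentially the same route as the paper: depth $\ge 2$ gives $H_{\mathfrak m}^1(S/I)=0$, Takayama's formula (Lemma \ref{Takayama}) in degree $\mathbf 0$ yields $\widetilde H^0(\Delta_{\mathbf 0},K)=0$, and the identification $\Delta_{\mathbf 0}=\Delta$ from Example \ref{D0} gives connectedness. Your extra remark that $\dim\Delta\ge 1$ guarantees $\Delta$ is non-empty is a harmless refinement of the same argument.
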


\begin{proof}
If $\depth S/I \ge 2$, we have $H_\mm^1(S/I) = 0$. By Lemma \ref{Takayama}, this implies $\widetilde H^0(\D_\0,K) = 0$. By the above example we know that $\D_\0 = \D$. Hence $\D$ is connected.
\end{proof}

We say that an ideal $I$ in $S$ (or $S/I$) satisfies {\it Serre condition} $(S_2)$ if $\depth (S/I)_P \ge \min\{2,\height P\}$ for every prime ideal $P$ of $S$. \smallskip

The condition $(S_2)$ as well as the Cohen-Macaulayness of $I_\D^{(m)}$ can be passed to the links of $\D$. To see that we need the following observation.

\begin{Lemma}\label{SymExt}
Let $I$ be a squarefree monomial ideal in a polynomial ring $R$.
Let $T:=R[y]$ be a polynomial ring over $R$. Then\par
{\rm (i)} $(I,y)^{(m)}$ satisfies $(S_2)$ if and only if $I^{(k)}$ satisfies $(S_2)$ for every $k$
with $1 \le k \le m$.\par
{\rm (ii)} $(I,y)^{(m)}$ is Cohen-Macaulay if and only if $I^{(k)}$ is Cohen-Macaulay for every $k$
with $1 \le k \le m$.
\end{Lemma}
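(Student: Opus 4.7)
The strategy is to exploit the $y$-grading on $T = R[y]$ to obtain an explicit $R$-module decomposition of $T/(I,y)^{(m)}$, from which both parts follow at once via a single local-cohomology calculation. Since $I = \bigcap_F P_F$ is the primary decomposition of the squarefree monomial ideal $I$ into monomial primes, and each $(P_F,y)$ is prime in $T$, we obtain
$$
(I,y)^{(m)} \;=\; \bigcap_F (P_F,y)^m.
$$
A direct expansion gives $(P_F,y)^m = \sum_{k=0}^m y^{m-k} P_F^k\,T$, whose $y$-degree $l$ component equals $P_F^{m-l}$ for $0 \le l < m$ and $R$ for $l \ge m$. Because an intersection of $y$-graded ideals is taken degreewise, the same holds for $(I,y)^{(m)}$ with $\bigcap_F P_F^{m-l} = I^{(m-l)}$ in place of $P_F^{m-l}$. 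Passing to the quotient yields the $R$-module isomorphism (via the $y$-grading)
$$
T/(I,y)^{(m)} \;\cong\; \bigoplus_{k=1}^{m} R/I^{(k)}.
$$

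Next I would pass to local cohomology. Because $y^mT \subseteq (I,y)^{(m)}$, the element $y$ acts nilpotently on $T/(I,y)^{(m)}$, so for any prime $Q$ of $R$ we have
$$
H^i_{(Q,y)T}\!\bigl(T/(I,y)^{(m)}\bigr) \;\cong\; H^i_{QT}\!\bigl(T/(I,y)^{(m)}\bigr) \;\cong\; \bigoplus_{k=1}^m H^i_{Q}(R/I^{(k)}),
$$
where the first isomorphism uses that local cohomology depends only on the radical of the support plus the annihilator, and the second follows from computing via the \v{C}ech complex on a generating set of $Q$ inside $R$ (which is insensitive to the $T$- versus $R$-module structure) together with the decomposition above.

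Every prime $P$ of $T$ containing $(I,y)^{(m)}$ contains $(I,y)$, hence has the form $P = (Q,y)T$ with $Q \supseteq I$ a prime of $R$. The formula then gives
$$
\depth\bigl(T/(I,y)^{(m)}\bigr)_P \;=\; \min_{1 \le k \le m}\depth(R/I^{(k)})_Q,
$$
while $\dim(T/(I,y)^{(m)})_P = \dim(R/I^{(k)})_Q$ for each $k$. Part (ii) is immediate: $(I,y)^{(m)}$ is Cohen-Macaulay iff equality $\depth = \dim$ holds at every $P$, iff each $R/I^{(k)}$ with $1 \le k \le m$ is Cohen-Macaulay. For part (i), the inequality $\depth \ge \min(2,\dim)$ at $P$ translates, via the same formula, into the inequality $\depth(R/I^{(k)})_Q \ge \min(2,\dim(R/I^{(k)})_Q)$ for every $k \le m$; so $(I,y)^{(m)}$ satisfies $(S_2)$ iff every $I^{(k)}$ with $1 \le k \le m$ does.

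The main conceptual step is establishing the clean $R$-module decomposition of $T/(I,y)^{(m)}$; the technical point is verifying that $(P_F,y)^m$ is $y$-graded with the computed degree components and that intersections of $y$-graded ideals are degreewise intersections. Once these are in hand, the nilpotence of $y$ makes the reduction of local cohomology from $T$ to $R$ routine, and both parts (i) and (ii) follow by reading off the same depth formula at the relevant primes.
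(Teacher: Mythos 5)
Your proof is correct and follows essentially the same route as the paper: both compute $(I,y)^{(m)}=\bigcap_F(P_F,y)^m$ degreewise in $y$ to obtain the $R$-module decomposition $T/(I,y)^{(m)}\cong\bigoplus_{k=1}^m R/I^{(k)}$, from which (i) and (ii) follow. The only difference is that you spell out the depth and local cohomology comparison at primes $(Q,y)$, which the paper leaves implicit after stating the isomorphism.
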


\begin{proof} 
Let $I=\cap_j P_j$ be the minimal prime decomposition of $I$. 
Since $(I,y)T=\cap_j (P_j,y)$ is the minimal prime decomposition of $(I,y)T$,
\[
(I,y)^{(m)}  =  \bigcap_j (P_j,y)^{m} 
 =  \bigcap_j \sum_{k=0}^{\ell} P_j^{k} y^{m-k} 
 =  \sum_{k=0}^{m} \left(\cap_i P_j^{k}\right) y^{\ell-k} 
 =  \sum_{k=0}^{m} I^{(k)} y^{m-k}.  
\]
Therefore, 
\[
T/(I,y)^{(m)}   \cong R/I \oplus R/I^{(2)} \oplus \cdots \oplus R/I^{(m)}
\]
as $R$-modules. The assertions (i) and (ii) follow from this isomorphism.
\end{proof}

\begin{Corollary}\label{SymLink}
Let $G$ be a face of $\D$. Then\par
{\rm (i)}  $I_\D^{(m)}S[x_i^{-1}|\ i \in G]$ satisfies $(S_2)$ if and only if $I_{\link_\D G}^{(k)}$
satisfies $(S_2)$ for every $k$ with $1 \le k \le m$.\par
{\rm (ii)} $I_\D^{(m)}S[x_i^{-1}|\ i \in G]$ is Cohen-Macaulay if and only if $I_{\link_\D G}^{(k)}$ is Cohen-Macaulay for every $k$ with $1 \le k \le m$.
\end{Corollary}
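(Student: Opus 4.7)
The plan is to reduce the statement to an iterated application of Lemma \ref{SymExt} via the combinatorial localization formula from Lemma \ref{local}.

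First, I would use Lemma \ref{local} together with the primary decomposition of $I_\D$ to describe $I_\D^{(m)}$ after inverting the variables indexed by $G$. Concretely, the primes $P_{\overline F}$ surviving the localization are exactly those with $G \subseteq F$, i.e. the primes coming from facets of $\star_\D G$. For such an $F$, writing $F' = F \setminus G$ (a facet of $\link_\D G$) and $Y = \{x_i \mid i \notin V(\star_\D G)\}$, one gets $P_{\overline F} = P_{\overline{F'}} + (Y)$, where now $\overline{F'}$ is the complement of $F'$ inside $V(\link_\D G)$. The same short computation as in the proof of Lemma \ref{SymExt} then gives
\[
I_\D^{(m)} S[x_i^{-1}\mid i \in G] = \bigcap_{F' \in \F(\link_\D G)} (P_{\overline{F'}} + (Y))^m\, S[x_i^{-1}\mid i \in G] = (I_{\link_\D G}, Y)^{(m)}\, S[x_i^{-1}\mid i \in G],
\]
where the symbolic power on the right is taken in $T = R[Y]$ with $R = K[x_i \mid i \in V(\link_\D G)]$.

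Next, by Remark \ref{link}, $S[x_i^{-1}\mid i \in G]$ is a Laurent polynomial extension of $T$, and none of the variables $x_i$ with $i \in G$ appears in the ideal $(I_{\link_\D G}, Y)$. Consequently,
\[
S[x_i^{-1}\mid i \in G]\big/ I_\D^{(m)} S[x_i^{-1}\mid i \in G] \;\cong\; \bigl(T/(I_{\link_\D G}, Y)^{(m)}\bigr)\bigl[x_i^{\pm 1}\mid i \in G\bigr].
\]
Since polynomial and Laurent polynomial extensions preserve and reflect both Cohen-Macaulayness and Serre's condition $(S_2)$, the problem reduces to characterizing when $(I_{\link_\D G}, Y)^{(m)}$ is Cohen-Macaulay or satisfies $(S_2)$ in $T$.

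Finally, I would iterate Lemma \ref{SymExt} by adjoining the variables of $Y$ to $R$ one at a time. After adding $y_1$, the lemma shows $(I_{\link_\D G}, y_1)^{(k)}$ is Cohen-Macaulay (resp.\ satisfies $(S_2)$) for all $1 \le k \le m$ if and only if $I_{\link_\D G}^{(j)}$ has the corresponding property for all $1 \le j \le m$. Adding $y_2$ requires $(I_{\link_\D G}, y_1)^{(k)}$ to have the property for each $1 \le k \le m$, which by the previous step is in turn equivalent to $I_{\link_\D G}^{(j)}$ having it for $1 \le j \le m$, and so on. After finitely many steps this yields the desired equivalence.

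I expect the only subtle point to be verifying the identification of symbolic powers in the first step; this amounts to checking that the minimal primes of $(I_{\link_\D G}, Y)$ in $T$ are exactly the $P_{\overline{F'}} + (Y)$, which is immediate from the fact that $Y$ forms a regular sequence on $R/I_{\link_\D G}$ together with the computation carried out in the proof of Lemma \ref{SymExt}. Everything else is a bookkeeping exercise combining a primary decomposition, a flat Laurent extension, and the iterated lemma.
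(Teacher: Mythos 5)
Your proposal is correct and follows essentially the same route as the paper: localize via Lemma \ref{local} and Remark \ref{link} to identify the ideal with $(I_{\link_\D G},Y)^{(m)}$ in $T=R[Y]$, pass across the (faithfully flat) Laurent extension, and apply Lemma \ref{SymExt}. You merely make explicit two points the paper leaves implicit, namely the identification of symbolic powers after inverting the variables of $G$ and the iteration of Lemma \ref{SymExt} over the variables of $Y$ one at a time.
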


\begin{proof} 
Let $Y  = \{x_i|\ i \not \in V(\star_{\Delta}G)\}$ and $T = K[x_i|\ x_i \not\in G]$.
By Lemma \ref{local} and Remark \ref{link}, $I_\D^{(m)}S[x_i^{-1}|\ i \in G]$ satisfies $(S_2)$ if and only if
$(I_{\link_\D G},Y)^{(m)}T$ satisfies $(S_2)$.
By Lemma \ref{SymExt}, $(I_{\link_\D G},Y)^{(m)}T$ satisfies $(S_2)$ if and only if $I_{\link_\D G}^{(k)}$   satisfies $(S_2)$ for every $k$ with $1 \le k \le m$. This proves (i). The proof of (ii) is similar.
\end{proof} 
 
Now we are able to prove the following characterization for the Cohen-Macaulayness of a symbolic power $I_\D^{(m)}$, $m \ge 3$.  

\begin{Theorem}\label{SymCM}
Let $\Delta$ be a simplicial complex with $\dim \D \ge 1$.
Then the following conditions are equivalent: \par
{\rm (i)} $I_{\Delta }^{(m)}$ is Cohen-Macaulay for every $m \ge 1$.\par
{\rm (ii)} $I_{\Delta }^{(m)}$ is Cohen-Macaulay for some $m \ge 3$.\par
{\rm (iii)} $I_{\Delta }^{(m)}$ satisfies $(S_2)$ for some $m \ge 3$.\par
{\rm (iv)} $\Delta$ is a matroid.
\end{Theorem}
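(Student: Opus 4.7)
The plan is to observe that (i) $\Rightarrow$ (ii) $\Rightarrow$ (iii) are formal, since Cohen--Macaulayness always implies Serre's $(S_2)$, and that (iv) $\Rightarrow$ (i) is exactly the known result of \cite[Theorem 3.5]{MiT2} and \cite{Va}. The only real content is the implication (iii) $\Rightarrow$ (iv), which I would establish by induction on $d = \dim \Delta$.

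For the base case $d = 1$, where $\Delta$ is a graph, I would appeal to \cite{MiT1}. From Lemma \ref{S2}, together with the observation that $(S_2)$ at the maximal ideal forces $\depth S/I_\Delta^{(m)} \ge 2$ once $n \ge 2$ and $\sqrt{I_\Delta^{(m)}} = I_\Delta$, the graph $\Delta$ is connected; the depth computations available in \cite{MiT1} then bridge $(S_2)$ of $I_\Delta^{(m)}$ for $m \ge 3$ to the Cohen--Macaulayness of $I_\Delta^{(m)}$, which by \cite{MiT1} characterizes $\Delta$ as a matroid.

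For the inductive step $d \ge 2$, assume $I_\Delta^{(m)}$ satisfies $(S_2)$ for some $m \ge 3$. First, for every vertex $i$, Corollary \ref{SymLink} (taken with $k = m$) shows $I_{\link_\Delta\{i\}}^{(m)}$ satisfies $(S_2)$; since $\dim \link_\Delta\{i\} < d$ and the zero-dimensional case is automatic, the inductive hypothesis makes $\link_\Delta\{i\}$ a matroid, so $\Delta$ is locally a matroid. Second, since $\dim S/I_\Delta^{(m)} = \dim \Delta + 1 \ge 3$, the $(S_2)$ hypothesis again forces $\depth S/I_\Delta^{(m)} \ge 2$, and together with $\sqrt{I_\Delta^{(m)}} = I_\Delta$, Lemma \ref{S2} yields the connectedness of $\Delta$. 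With $\Delta$ connected, locally a matroid, and of dimension at least $2$, Theorem \ref{SymLocal} concludes that $\Delta$ is a matroid.

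The main obstacle I anticipate is the base case $d = 1$: the inductive step slots together cleanly once Theorem \ref{SymLocal} and Corollary \ref{SymLink} are in hand, but Theorem \ref{SymLocal} genuinely fails in dimension one (every graph is trivially locally a matroid, while most connected graphs are not matroids), so one cannot bootstrap graphs from smaller links. Extracting the matroid property from the $(S_2)$ condition on $I_\Delta^{(m)}$ must therefore be done directly, with the Takayama formula of Lemma \ref{Takayama} translating the vanishing of local cohomology into the combinatorial $4$-cycle condition of Corollary \ref{matroid graph}; the delicate point is choosing the graded components $\a$ that witness enough non-matroid obstructions to violate $(S_2)$ in some fixed symbolic power.
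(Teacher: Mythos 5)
Your reduction and your inductive step coincide with the paper's own argument: (i)$\Rightarrow$(ii)$\Rightarrow$(iii) are formal, (iv)$\Rightarrow$(i) is quoted from \cite{MiT2} and \cite{Va}, and for $\dim \D \ge 2$ you pass $(S_2)$ to the vertex links via Corollary \ref{SymLink}, get connectedness from Lemma \ref{S2}, and conclude with Theorem \ref{SymLocal}; that part is correct and is exactly what the paper does (the paper likewise glosses over links of dimension $0$, which are trivially matroids, so your remark there is fine).

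The one place where your proposal does not actually close is the base case $\dim \D = 1$, and the obstacle you anticipate there is illusory. No analysis of graded components $\a$ via Takayama's formula, and no direct extraction of the $4$-cycle condition from $(S_2)$, is needed: when $\dim \D = 1$ the ring $S/I_\D^{(m)}$ has Krull dimension $2$, so the $(S_2)$ hypothesis at the graded maximal ideal already gives $\depth S/I_\D^{(m)} \ge 2 = \dim S/I_\D^{(m)}$, i.e.\ $I_\D^{(m)}$ is Cohen--Macaulay. At that point \cite[Theorem 2.4]{MiT1} applies as a black box: for a graph, Cohen--Macaulayness of $I_\D^{(m)}$ for some $m \ge 3$ forces every pair of disjoint edges to lie in a $4$-cycle, which by Corollary \ref{matroid graph} says $\D$ is a matroid. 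This is precisely how the paper treats dimension one. Your second paragraph gestures at this (``depth computations available in \cite{MiT1} bridge $(S_2)$ to Cohen--Macaulayness''), but the bridge is nothing more than the equality of depth and dimension in Krull dimension two; connectedness is not even needed there. Your closing paragraph, which declares the base case the main unresolved difficulty and proposes a delicate choice of degrees witnessing non-matroid obstructions, retreats from a step that is in fact a one-line observation, so as written your argument leaves that case open even though the fix is immediate.
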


\begin{proof}
(i)$\Rightarrow$(ii)$\Rightarrow$(iii) is clear. 

(iii)$\Rightarrow$(iv). 
If $\dim \D = 1$,  $(S_2)$ means that  $I_{\Delta }^{(m)}$ is Cohen-Macaulay.
In this case, the assertion follows from \cite[Theorem 2.4]{MiT1}. 
Let $\dim \D \ge 2$. By Theorem \ref{SymLocal} we only need to show that $\D$ is connected and locally a matroid. Since $(S_2)$ implies $\depth S/I_\D^{(m)} \ge 2$, $\D$ is connected by Lemma \ref{S2}. By Corollary \ref{SymLink}, $I_{\link_\D\{i\} }^{(m)}$ satisfies $(S_2)$ for all $i = 1,...,n$. Using induction on $\dim \D$ we may assume that $\link_\D\{i\}$ is a matroid for all $i = 1,...,n$. Hence $\D$ is locally a matroid.

(iv)$\Rightarrow$(i) follows from \cite[Theorem 3.5]{MiT2} or \cite[Theorem 2.1]{Va}.
\end{proof}

The implication (ii) $\Rightarrow$ (i) gives a positive answer to the question of  \cite{MiT2} whether there exists a number $t$ depending on $\dim \Delta$ such that if $I_{\Delta }^{(t)}$ is Cohen-Macaulay, then  
$I_{\Delta }^{(m)}$ is Cohen-Macaulay for every $m \ge 1$.
As the Cohen-Macaulayness of $I_\D^{(2)}$ implies that of $I_{\Delta }$ \cite[Theorem 3.7]{HHT} we also obtain a positive answer to the question of \cite{MiT2} whether the Cohen-Macaulayness of $I_{\Delta }^{(m+1)}$ implies that of  $I_{\Delta }^{(m)}$ for every $m \ge 1$. \smallskip

Combinatorial criteria for the Cohen-Macaulayness of $I_\D^{(2)}$ can be found in \cite{MiT1} for $\dim \D= 1$ and in \cite{MiT2} for arbitrary dimension. Using these criteria one can easily find simplicial complexes $\D$ such that $I_\D^{(2)}$ is Cohen-Macaulay but $I_\D^{(m)}$ is not Cohen-Macaulay for every $m\ge 3$. Such an example is the 5-cycle, which is not a matroid.
\smallskip

Next we consider the generalized Cohen-Macaulayness of large symbolic powers of Stanley-Reisner ideals.
\smallskip

Recall that a homogeneous ideal  $I$ in $S$ (or $S/I$) is called \textit{generalized Cohen-Macaulay} if
$\dim_K  H_{\frak m}^i(S/I) < \infty $ for $i=0,1,\ldots,\dim S/I-1$. 
It is well-known that $I$ is generalized Cohen-Macaulay if and only if 
$IS[x_i^{-1}]$ is Cohen-Macaulay for $i = 1,...,n$ and $S/I$ is equidimensional, that is, $\dim S/P = \dim S/I$ for every minimal prime $P$ of $I$ (see \cite{CST}). The class of generalized Cohen-Macaulay ideals is rather large. For instance, $I_{\Delta }^{(m)}$ is generalized Cohen-Macaulay for every $m \ge 1$ if $\D$ is pure and $\dim \D = 1$. For $\dim \D \ge 2$, the situation is completely different.

\begin{Theorem}\label{SymFLC}
Let $\Delta$ be a simplicial complex with $\dim \Delta \ge 2.$
Then the following conditions are equivalent:\par
{\rm (i)} $I_{\Delta }^{(m)}$ is generalized Cohen-Macaulay for every $m \ge 1$.\par
{\rm (ii)} $I_{\Delta }^{(m)}$ is generalized Cohen-Macaulay for some $m \ge 3$.\par
{\rm (iii)} $\Delta$ is a union of disjoint matroids of the same dimension.
\end{Theorem}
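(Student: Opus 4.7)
The plan is to prove the cycle (i)$\Rightarrow$(ii)$\Rightarrow$(iii)$\Rightarrow$(i). The implication (i)$\Rightarrow$(ii) is trivial, so the work lies in the other two. The key tool will be the characterization of generalized Cohen-Macaulayness recalled right before the statement: $I_\D^{(m)}$ is generalized Cohen-Macaulay precisely when $S/I_\D^{(m)}$ is equidimensional and $I_\D^{(m)}S[x_i^{-1}]$ is Cohen-Macaulay for every variable $x_i$. This reduces the problem to controlling links via Corollary \ref{SymLink} and invoking Theorem \ref{SymCM} and Corollary \ref{local matroid}.

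For (ii)$\Rightarrow$(iii), I would first note that the minimal primes of $I_\D^{(m)}$ coincide with those of $I_\D$, namely the ideals $P_{\overline F}$ for $F \in \F(\D)$, so equidimensionality of $S/I_\D^{(m)}$ is exactly the statement that $\D$ is pure. Once $\D$ is pure, each vertex $i$ satisfies $\dim \link_\D\{i\} = \dim \D - 1 \ge 1$. Then, for each $i$, the Cohen-Macaulayness of $I_\D^{(m)}S[x_i^{-1}]$ combined with Corollary \ref{SymLink} gives that $I_{\link_\D\{i\}}^{(k)}$ is Cohen-Macaulay for $k = 1,\dots,m$; in particular for our chosen $m \ge 3$. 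By Theorem \ref{SymCM} applied to the links, $\link_\D\{i\}$ is a matroid, so $\D$ is locally a matroid. Corollary \ref{local matroid} then forces $\D$ to be a union of disjoint matroids, and purity makes these components of the same dimension.

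For (iii)$\Rightarrow$(i), I would reverse the argument. Write $\D = \G_1 \sqcup \cdots \sqcup \G_s$ as a disjoint union of matroids of common dimension $d$; purity gives equidimensionality of $S/I_\D^{(m)}$ for every $m$. For each vertex $i$ in, say, $\G_j$, one has $\link_\D\{i\} = \link_{\G_j}\{i\}$, which is again a matroid since matroids are closed under taking links. By Theorem \ref{SymCM}, $I_{\link_\D\{i\}}^{(k)}$ is Cohen-Macaulay for every $k \ge 1$, and Corollary \ref{SymLink} then yields that $I_\D^{(m)}S[x_i^{-1}]$ is Cohen-Macaulay for every $m \ge 1$. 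Combined with equidimensionality, $I_\D^{(m)}$ is generalized Cohen-Macaulay for all $m \ge 1$.

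The only subtle point is the order in (ii)$\Rightarrow$(iii): one must extract purity from equidimensionality \emph{before} applying Theorem \ref{SymCM} to the links, since that theorem requires $\dim \link_\D\{i\} \ge 1$. No other obstacle is expected, as both directions reduce cleanly to the established link-matroid dictionary.
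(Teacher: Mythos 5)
Your proof is correct and follows essentially the same route as the paper's: reduce to the vertex links via Corollary \ref{SymLink}, apply Theorem \ref{SymCM} and Corollary \ref{local matroid}, and read equidimensionality of $S/I_\D^{(m)}$ as purity of $\D$. Your extra care in extracting purity first in (ii)$\Rightarrow$(iii) (so that $\dim \link_\D\{i\} \ge 1$ and Corollary \ref{local matroid} applies) is only a slight reordering of the paper's argument, not a different method.
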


\begin{proof}
(i)$\Rightarrow$(ii) is clear.

(ii)$\Rightarrow$(iii).
Since $I_{\Delta }^{(m)}S[x_i^{-1}]$ is Cohen-Macaulay for $i = 1,...,n$, 
$I_{\link_{\Delta}\{i\} }^{(m)}$ is Cohen-Macaulay by Corollary \ref{SymLink}. 
By Theorem \ref{SymCM}, $\link_{\Delta}\{i\}$ is a matroid for $i = 1,...,n$.
Hence $\Delta$ is locally a matroid.
By Corollary \ref{local matroid},  $\Delta$ is a union of disjoint matroids. 
On the other hand, since $S/I_\D$ is equidimensional, $\D$ is pure. 
Therefore, the connected components of $\D$ have the same dimension.

(iii)$\Rightarrow$(i). 
By Corollary \ref{local matroid}, $\D$ is locally a matroid.
Since $\link_{\Delta}\{i\} $ is a matroid for $i = 1,...,n$,  
$I_{\link_{\Delta}\{i\} }^{(m)}$ is Cohen-Macaulay for every $m \ge 1$ by Theorem \ref{SymCM}.
By Corollary  \ref{SymLink}, this implies the Cohen-Macaulayness of  $I_{\Delta }^{(m)}S[x_i^{-1}]$ for for every $m \ge 1$, $i = 1,...,n$. Since $\D$ is pure, $S/I_\D$ is equidimensional.
Therefore, $I_{\Delta }^{(m)}$ is generalized Cohen-Macaulay for every $m \ge 1$.
\end{proof}

A homogeneous ideal $I$ in $S$ (or $S/I$) is called {\it Buchsbaum} or {\it quasi-Buchsbaum} if the natural map $\text{Ext}_S^i(K,S/I) \to H_\mm^i(S/I)$  is surjective or $\mm H_\mm^i
(S/I) = 0$ for $i = 0,...,\dim S/I-1$ (see e.g. \cite{SV} and \cite{Go2}). We have the following implications:
\smallskip

\centerline{Cohen-Macaulayness $\Rightarrow$ Buchsbaumness $\Rightarrow$ quasi-Buchsbaumness}\par
\centerline{$\Rightarrow$ generalized Cohen-Macaulayness.}
\medskip

We will use Theorem \ref{SymFLC} to study the Buchsbaumness and quasi-Buchsbaumness of large symbolic powers of Stanley-Reisner ideals. For that we need the following observation.

\begin{Lemma} \label{SymNonQBbm}
Let $\Delta$ be a simplicial complex with $\dim \Delta \ge 1.$ Then $\D$ is connected if 
$I_{\Delta }^{(m)}$ is quasi-Buchsbaum for some $m \ge 2$.
\end{Lemma}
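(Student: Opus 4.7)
The plan is to prove the contrapositive: if $\D$ is disconnected then $I_\D^{(m)}$ is not quasi-Buchsbaum for any $m \ge 2$. Since $\dim \D \ge 1$ forces $\dim S/I_\D^{(m)} \ge 2$, quasi-Buchsbaumness requires $\mm H_\mm^1(S/I_\D^{(m)}) = 0$, and I will produce a vertex $i$ for which multiplication by $x_i$ acts nontrivially on $H_\mm^1(S/I_\D^{(m)})$.

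By Takayama's formula (Lemma \ref{Takayama}), $[H_\mm^1(S/I_\D^{(m)})]_\a \cong \widetilde H^0(\D_\a, K)$; by Example \ref{D0}, $\D_\0 = \D$, so $[H_\mm^1(S/I_\D^{(m)})]_\0 \cong \widetilde H^0(\D, K) \neq 0$ since $\D$ is disconnected. For $\a = e_i$ I claim that $\D_{e_i} = \D$ whenever $m \ge 2$. Indeed, $F \subseteq [n]$ belongs to $\D_{e_i}$ iff $x_i \notin I_\D^{(m)} S[x_j^{-1}|\ j \in F]$, and via $I_\D^{(m)} = \bigcap_{G \in \F(\D)} P_{\overline G}^m$ this reduces to asking whether there is a monomial $x^{\b''}$ with $\b''_l = (e_i)_l$ for $l \notin F$ and $\sum_{l \in \overline G} \b''_l \ge m$ for every facet $G$ of $\D$. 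A short case analysis on whether $i \in F$ and whether $i \in G$ shows that, when $m \ge 2$, the freedom to enlarge $\b''_l$ on $F \cap \overline G$ together with the fixed contribution $[i \in \overline G]$ from $x_i$ produces such a monomial precisely when $F \notin \D$. Hence $F \in \D_{e_i}$ iff $F \in \D$, and $[H_\mm^1(S/I_\D^{(m)})]_{e_i} \cong \widetilde H^0(\D, K)$.

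The key remaining step is to identify $x_i \colon [H_\mm^1]_\0 \to [H_\mm^1]_{e_i}$ with the identity on $\widetilde H^0(\D, K)$. Computing $H_\mm^\bullet$ from the $\ZZ^n$-graded \v{C}ech complex $\check C^\bullet(S/I_\D^{(m)})$ with respect to $x_1, \dots, x_n$, the $\a$-graded piece $\check C^p_\a$ has a natural $K$-basis $\{x^\a e_\sigma\}$ indexed by $p$-subsets $\sigma \supseteq G_\a$ of $[n]$ with $x^\a \notin I_\D^{(m)} S[x_j^{-1}|\ j \in \sigma]$, and multiplication by $x_i$ carries $x^\a e_\sigma$ to $x^{\a+e_i} e_\sigma$. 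Since $\D_\0 = \D_{e_i} = \D$, the complexes $\check C^\bullet_\0$ and $\check C^\bullet_{e_i}$ share the same indexing set, and this map is literally an isomorphism of cochain complexes on $\D$; passing to cohomology gives an isomorphism of $\widetilde H^0(\D, K)$ with itself.

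Consequently $x_i$ acts nonzero on $H_\mm^1(S/I_\D^{(m)})$, contradicting quasi-Buchsbaumness. The main obstacle is the combinatorial identity $\D_{e_i} = \D$ for $m \ge 2$; this fails for $m = 1$, where $\D_{e_i} = \star_\D\{i\}$ is contractible and kills $[H_\mm^1]_{e_i}$, explaining why the hypothesis $m \ge 2$ is needed. Once this identity is in hand, the matching of the \v{C}ech-level multiplication with the simplicial identity is automatic from the shared basis.
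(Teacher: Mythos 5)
Your proof is correct and follows essentially the same route as the paper: both hinge on showing $\D_{\mathbf e}=\D_{\mathbf 0}=\D$ for $m\ge 2$ via the decomposition $I_\D^{(m)}=\bigcap_F P_{\overline F}^m$, and then on identifying multiplication by a variable on $H^1_\mm$ with an isomorphism of $\widetilde H^0(\D,K)$, forcing $\mm H^1_\mm\neq 0$ when $\D$ is disconnected. The only difference is cosmetic: where the paper invokes Minh--Nakamura's commutative diagram relating the $x_1$-multiplication to the map induced by $\D_{\mathbf e}\hookrightarrow\D_{\mathbf 0}$, you verify that compatibility directly on the graded pieces of the \v{C}ech complex, which is a self-contained proof of the same fact.
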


\begin{proof}
Set $I=I_{\Delta }^{(m)}$ and $\e = (1,0,...,0)$. We have $\D_0 = \D$ by Example \ref{D0} and
$$\Delta_{\bf e} =\{F \subset [n]|\ x_1 \not\in IS[x_i|\ i \in F]\}.$$
For $F \in \F(\D)$ we have $IS[x_i|\ i \in F] = P_{\overline F}^mS[x_i|\ i \in F]$. 
Since  $x_1 \not\in P_{\overline F}^mS[x_i|\ i \in F]$ ($m \ge 2$) we get $F \in \D_\e$. For $F \not\in \D$ we have $IS[x_i|\ i \in F] = S[x_i|\ i \in F]$. Hence $\D_{\e} = \D$.
By \cite[Lemma 2.3]{MN1} there is a commutative diagram

\begin{picture}(400,70)
\put(120,50){$H_{\frk m}^1(S/I)_{\bf 0}$} 
\put(185,55){\vector(1,0){40}} 
\put(145,42){\vector(0,-1){20}}
\put(200,58){$x_1$}
\put(240,50){$H_{\frk m}^1(S/I)_{\bf e}$} 
\put(110,10){$\widetilde{H}^0(\Delta_{\bf 0},K)$} 
\put(190,15){\vector(1,0){35}} 
\put(270,42){\vector(0,-1){20}}
\put(240,10){$\widetilde{H}^0(\Delta_{\bf e},K)$,} 
\end{picture}
\par \vspace{1mm}

\noindent where the vertical maps are the isomorphisms of Lemma \ref{Takayama}
and the lower horizontal map is induced from the natural embedding 
$\Delta_{{\bf e}} \hookrightarrow  \Delta_\0$.
Since $\D_\e = \D_\0$, this map is an identity. Therefore,
$$
 \widetilde{H}_0(\Delta ,K)  \cong H_{\frk m}^1(S/I)_{\bf e} = x_1H_{\frk m}^1(S/I)_{\bf 0} = 0
$$
because $I$ is quasi-Buchsbaum. The vanishing of  $\widetilde{H}_0(\Delta ,K)$ just means that $\D$ is connected.
\end{proof}

The above lemma doesn't hold for $m = 1$. 
It is well known that if $I_\D$ is generalized Cohen-Macaulay, then $I_\D$ is Buchsbaum \cite[Theorem 3.2]{Sc}, \cite[Theorem 8.1]{St}.
Hence $I_\D$ is Buchsbaum if $\dim \D = 1$, even when $\D$ is unconnected. \smallskip

The following result shows that the Cohen-Macaulayness of $I_{\Delta }^{(m)}$, $m \ge3$, is equivalent to the Buchsbaumness and quasi-Buchsbaumness.

\begin{Theorem}\label{SymBbm}
Let $\Delta$ be a pure simplicial complex with $\dim \Delta \ge 2.$
Then the following conditions are equivalent:\par
{\rm (i)} $I_{\Delta }^{(m)}$ is Cohen-Macaulay for every $m \ge 1$.\par
{\rm (ii)} $I_{\Delta }^{(m)}$ is Buchsbaum for some $m \ge 3$.\par
{\rm (iii)} $I_{\Delta }^{(m)}$ is quasi-Buchsbaum for some $m \ge 3$.\par
{\rm (iv)} $\Delta$ is a matroid.
\end{Theorem}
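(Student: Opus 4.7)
The plan is to exploit the implication chain \emph{Cohen-Macaulay} $\Rightarrow$ \emph{Buchsbaum} $\Rightarrow$ \emph{quasi-Buchsbaum} $\Rightarrow$ \emph{generalized Cohen-Macaulay} to reduce the theorem to already established results, so that (i)$\Rightarrow$(ii)$\Rightarrow$(iii) are immediate and (iv)$\Leftrightarrow$(i) is just Theorem \ref{SymCM}. Thus the only content is to prove (iii)$\Rightarrow$(iv), i.e.\ that if $I_\D^{(m)}$ is quasi-Buchsbaum for some $m \ge 3$, then $\D$ is a matroid.

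For that step I would argue as follows. First, quasi-Buchsbaumness forces $\dim_K H^i_\mm(S/I_\D^{(m)}) < \infty$ for $i < \dim S/I_\D$, so $I_\D^{(m)}$ is generalized Cohen-Macaulay. Applying Theorem \ref{SymFLC}, this yields that $\D$ is a union of disjoint matroids, all of the same dimension. Second, since $m \ge 3 \ge 2$, Lemma \ref{SymNonQBbm} applies and gives that $\D$ is connected. A connected disjoint union of matroids is of course a single matroid, so $\D$ itself is a matroid, which is (iv).

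Finally, (iv)$\Rightarrow$(i) is exactly the implication (iv)$\Rightarrow$(i) of Theorem \ref{SymCM}, and the trivial implications (i)$\Rightarrow$(ii)$\Rightarrow$(iii) follow from the standard chain of properties recalled just before the statement. No calculation beyond invoking these two earlier theorems and one lemma is required; the only conceptual point is the combination of the ``local'' information (disjoint matroid components from generalized Cohen-Macaulayness) with the ``global'' information (connectedness from quasi-Buchsbaumness) to recover the matroid property. Since both pieces are already in place, I do not anticipate a genuine obstacle; the proof is essentially a two-line assembly, whose slight subtlety is noticing that the hypothesis $m \ge 3$ is used for Theorem \ref{SymFLC} while the weaker $m \ge 2$ suffices for Lemma \ref{SymNonQBbm}, so both ingredients are simultaneously available.
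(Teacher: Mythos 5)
Your proof is correct and follows essentially the same route as the paper: quasi-Buchsbaumness gives generalized Cohen-Macaulayness, hence a disjoint union of matroids by Theorem \ref{SymFLC}, while Lemma \ref{SymNonQBbm} supplies connectedness, and (iv)$\Rightarrow$(i) is the known matroid result. The paper's argument is the same two-ingredient assembly, so there is nothing to add.
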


\begin{proof}
(i)$\Rightarrow$(ii)$\Rightarrow$(iii) is clear.

(iii)$\Rightarrow$(iv).
The quasi-Buchsbaumness implies that $I_\Delta^{(m)}$  is generalized Cohen-Macaulay.
By Theorem \ref{SymFLC}, $\Delta$ is a union of disjoint matroids. 
On the other hand,  $\D$ is connected by Lemma \ref{SymNonQBbm}. Hence  $\D$ is a matroid.

(iv)$\Rightarrow$(i) follows from \cite[Theorem 3.5]{MiT2} or \cite[Theorem 2.1]{Va}.
\end{proof}

The situation is a bit different if $\dim \Delta =1$. 
Minh-Nakamura \cite[Theorem 3.7]{MN1} showed that for a graph $\D$, $I_\D^{(3)}$ is Buchsbaum if and only if $I_\D^{(2)}$ is Cohen-Macaulay and that for $m \ge 4$, $I_\D^{(m)}$  is Cohen-Macaulay if $I_\D^{(m)}$ is Buchsbaum.
For instance, if $\D$ is a 5-cycle, $I_{\Delta }^{(3)}$ is Buchsbaum but not Cohen-Macaulay.

\section{Cohen-Macaulayness of large ordinary powers}

In this section we study the Cohen-Macaulayness of ordinary powers of Stanley-Reisner ideals. \smallskip

 It is well known that $I_\D^m$ is Cohen-Macaulay for every $m \ge 1$ if and only if $\D$ is a complete intersection \cite{CN}. If $\dim \D \le 2$, we know that $\D$ is a complete intersection if $I_\D^m$ is Cohen-Macaulay for some $m \ge 3$ \cite{MiT1}, \cite{TrTu}. It was asked whether this result holds in general \cite[Question 3]{TrTu}. We shall give a positive answer to this question by showing that $\D$ is a complete intersection if $S/I_\D^m$ satisfies Serre condition $(S_2)$ for some $m \ge 3$.
 \smallskip
 
To study the relationship between the ordinary powers of $\D$ and of its links we need the following observation. 

\begin{Lemma}\label{OrdExt} 
Let $I$ be a monomial ideal in a polynomial ring $R$. Let $T:=R[y]$ be a polynomial ring over $R$. Then\par
{\rm (i)} $(I,y)^{m}$ satisfies $(S_2)$ if and only if $I^k$ satisfies $(S_2)$ for every $k$ with $1 \le k \le m$.\par
{\rm (ii)} $(I,y)^{m}$ is Cohen-Macaulay if and only if $I^k$ is Cohen-Macaulay for every $k$ with $1 \le k \le m$.
\end{Lemma}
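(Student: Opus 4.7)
The plan is to mirror the proof of Lemma~\ref{SymExt}, with the caveat that in the ordinary-power case the decomposition below is only an $R$-module isomorphism rather than a $T$-module one. First, expanding the $m$-fold product of the generators of $(I,y)$ yields the ideal identity
\[
(I,y)^m \;=\; \sum_{k=0}^{m} I^k y^{m-k} T \qquad (\text{with } I^0 := R).
\]
Decomposing $T = \bigoplus_{j \ge 0} R y^j$ by powers of $y$, the coefficient of $y^j$ in $(I,y)^m$ equals $I^{\max(m-j,0)}$, and passing to the quotient yields
\[
T/(I,y)^m \;\cong\; \bigoplus_{k=1}^{m} R/I^k \qquad \text{as $R$-modules.}
\]

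Second, I would transfer the $(S_2)$ and Cohen-Macaulay properties from $T/(I,y)^m$ (as a $T$-module) to those of each $R/I^k$ (as an $R$-module). Since $y^m$ annihilates $T/(I,y)^m$, inverting $y$ kills the module; hence the extended \v{C}ech complex on $x_1,\ldots,x_n,y$ computing $H^\bullet_{\Mm}(T/(I,y)^m)$ collapses to the \v{C}ech complex on $x_1,\ldots,x_n$, and so $H^i_{\Mm}(T/(I,y)^m) = H^i_{\mm}(T/(I,y)^m)$. Therefore depth and dimension of $T/(I,y)^m$ over $T$ agree with those over $R$, and together with the direct-sum decomposition this reduces the Cohen-Macaulay question to each $R/I^k$. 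The same argument applied locally handles $(S_2)$: a prime $P$ of $T$ containing $(I,y)$ has the form $(P',y)$ with $P' \supseteq I$ a prime of $R$, and the nilpotence of $y$ identifies $(T/(I,y)^m)_P$ with $\bigoplus_k (R/I^k)_{P'}$ as $R_{P'}$-modules. Since $\dim R/I^k = \dim R/I$ is independent of $k$, Cohen-Macaulayness (respectively $(S_2)$) of the direct sum is equivalent to the corresponding property of each summand.

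The main obstacle is the transfer in the second paragraph: the decomposition is only an $R$-module isomorphism while the lemma concerns $T$-module properties, and multiplication by $y$ genuinely shifts between summands. What makes the transfer work is the nilpotence of $y$ on $T/(I,y)^m$, which causes the extended \v{C}ech complex over $T$ (and the localizations used in the $(S_2)$ test) to reduce to their $R$-counterparts.
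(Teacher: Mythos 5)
Your proposal is correct and follows essentially the same route as the paper: the expansion $(I,y)^m=\sum_{k=0}^{m}I^k y^{m-k}$ and the resulting $R$-module isomorphism $T/(I,y)^m\cong R/I^m\oplus\cdots\oplus R/I$, from which both assertions are read off. The extra material you supply (collapse of local cohomology and of localizations at primes $P=(P',y)$ via the nilpotence of $y$, and the fact that all summands $R/I^k$ have the same dimension) simply makes explicit the transfer step that the paper disposes of with ``the assertion follows from this isomorphism.''
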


\begin{proof} 
Since 
$
(I,y)^{m} 
 =  \sum_{k=0}^{m} I^{k} y^{m-k} 
$, we have
\[
T/(I,y)^{m}   \cong R/I^{m} \oplus R/I^{m-1} \oplus \cdots \oplus R/I
\]
as $R$-modules. 
The assertion follows from this isomorphism.
\end{proof}

\begin{Corollary}\label{OrdLink}
Let $G$ be a face of $\D$. Then\par
{\rm (i)}  $I_\D^{m}S[x_i^{-1}|\ i \in G]$ satisfies $(S_2)$ if and only if $I_{\link_\D G}^{k}$
satisfies $(S_2)$ for every $k$ with $1 \le k \le m$.\par
{\rm (ii)} $I_\D^{m}S[x_i^{-1}|\ i \in G]$ is Cohen-Macaulay if and only if $I_{\link_\D G}^{k}$ is Cohen-Macaulay for every $k$ with $1 \le k \le m$.
\end{Corollary}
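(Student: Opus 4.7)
The plan is to mirror the proof of Corollary \ref{SymLink} almost verbatim, substituting ordinary powers for symbolic powers and invoking Lemma \ref{OrdExt} in place of Lemma \ref{SymExt}. First I would set $Y = \{x_i \mid i \notin V(\star_{\Delta}G)\}$ and $T = K[x_i \mid x_i \notin G]$, and rewrite the localization. By Lemma \ref{local} we have $I_\Delta S[x_i^{-1}\mid i \in G] = (I_{\link_\Delta G},Y)\,S[x_i^{-1}\mid i \in G]$, and since the formation of ordinary powers commutes with localization, raising to the $m$-th power yields
$$I_\Delta^m\,S[x_i^{-1}\mid i \in G] = (I_{\link_\Delta G},Y)^m\,S[x_i^{-1}\mid i \in G].$$

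Next I would use Remark \ref{link}, which identifies $S[x_i^{-1}\mid i \in G]$ with the Laurent polynomial extension $T[x_i^{\pm 1}\mid x_i \in G]$ of $T$. Such an extension is faithfully flat with regular fibers, so it preserves both Cohen-Macaulayness and Serre's condition $(S_2)$. Hence $I_\Delta^m\,S[x_i^{-1}\mid i\in G]$ satisfies $(S_2)$ (respectively, is Cohen-Macaulay) if and only if the ideal $(I_{\link_\Delta G},Y)^m T$ does.

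The third and final step is to peel off the variables of $Y$ one by one using Lemma \ref{OrdExt}. Writing $Y=\{y_1,\dots,y_r\}$ and $R = K[x_i \mid i \in V(\link_\Delta G)]$, set $T_0=R$ and $T_j=T_{j-1}[y_j]$; a single application of Lemma \ref{OrdExt} reduces the property for $(I_{\link_\Delta G},y_1,\dots,y_j)^m T_j$ to the same property for $(I_{\link_\Delta G},y_1,\dots,y_{j-1})^k T_{j-1}$ with $1\le k\le m$. Iterating all $r$ steps, and noting that the nested range ``$1\le j_1\le\cdots\le j_r\le m$'' collapses to ``$1\le k\le m$'', we conclude that $(I_{\link_\Delta G},Y)^m T$ has the desired property if and only if $I_{\link_\Delta G}^k$ has it for every $k$ with $1\le k\le m$. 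This proves (i); part (ii) follows by exactly the same argument applied with Cohen-Macaulayness in place of $(S_2)$.

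I do not expect any serious obstacle: the argument is purely formal bookkeeping once Lemma \ref{local}, Remark \ref{link}, and Lemma \ref{OrdExt} are in hand. The only point requiring a touch of care is the iteration through $Y$, where one must check that the descending sequence of permissible exponents behaves correctly; but this is immediate from the direct-sum decomposition built into the proof of Lemma \ref{OrdExt}.
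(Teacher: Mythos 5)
Your proposal is correct and follows essentially the same route as the paper: the paper's proof of Corollary \ref{OrdLink} is precisely "apply Lemma \ref{OrdExt} as in the proof of Corollary \ref{SymLink}," i.e.\ use Lemma \ref{local} and Remark \ref{link} to pass to $(I_{\link_\Delta G},Y)^m T$ and then remove the variables of $Y$ via Lemma \ref{OrdExt}. Your explicit iteration over the variables of $Y$, with the observation that the nested exponent ranges collapse to $1\le k\le m$, just spells out what the paper leaves implicit.
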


\begin{proof} 
The assertions follow from Lemma \ref{OrdExt} similarly as in the proof of 
Corollary \ref{SymLink}.
\end{proof} 

Now we are able to prove the following characterizations of the Cohen-Macaulayness of $I_\D^m$, $m \ge 3$.

\begin{Theorem}\label{OrdCM}
Let $\Delta$ be a simplicial complex with $\dim \D \ge 1$.
Then the following conditions are equivalent: \par
{\rm (i)} $I_{\Delta }^{m}$ is Cohen-Macaulay for every $m \ge 1$. \par
{\rm (ii)} $I_{\Delta }^{m}$ is Cohen-Macaulay for some $m \ge 3$. \par
{\rm (iii)} $I_{\Delta }^{m}$ satisfies $(S_2)$ for some $m \ge 3$. \par
{\rm (iv)} $\Delta$ is a complete intersection.
\end{Theorem}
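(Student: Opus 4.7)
The plan is to follow exactly the same strategy as in the proof of Theorem \ref{SymCM}, replacing symbolic powers by ordinary powers. The implications (i)$\Rightarrow$(ii)$\Rightarrow$(iii) are immediate, and (iv)$\Rightarrow$(i) is the classical theorem of Cowsik--Nori \cite{CN}. The entire content is therefore in the implication (iii)$\Rightarrow$(iv), which I would handle by induction on $\dim\Delta$, with the one-dimensional case supplied by \cite{MiT1} (where $(S_2)$ and Cohen--Macaulayness coincide because $\dim S/I_\Delta^m=n-1$, so $(S_2)$ already forces depth $\geq n-1$).

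For $\dim\Delta\geq 2$, the structural tool is Lemma \ref{OrdLocal}, which reduces proving ``$\Delta$ is a complete intersection'' to proving two things: $\Delta$ is \emph{connected}, and $\Delta$ is \emph{locally a complete intersection}. First I would handle connectedness: since $(S_2)$ implies $\depth S/I_\Delta^m\geq 2$, and since $\sqrt{I_\Delta^m}=I_\Delta$, Lemma \ref{S2} immediately gives that $\Delta$ is connected. Second, for the local part, I would invoke Corollary \ref{OrdLink}: from the hypothesis that $I_\Delta^m$ satisfies $(S_2)$ one obtains that $I_\Delta^m S[x_i^{-1}]$ satisfies $(S_2)$ for every vertex $i$, and hence $I_{\link_\Delta\{i\}}^k$ satisfies $(S_2)$ for every $k$ with $1\le k\le m$; in particular $I_{\link_\Delta\{i\}}^m$ satisfies $(S_2)$ with the same $m\geq 3$. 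Since $\dim\link_\Delta\{i\}\geq 1$, the inductive hypothesis applies and $\link_\Delta\{i\}$ is a complete intersection for every $i$, i.e.\ $\Delta$ is locally a complete intersection. Combining the two, Lemma \ref{OrdLocal} gives that $\Delta$ is a complete intersection.

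The main obstacle here is essentially packaging the localization step properly: one must verify that the $(S_2)$ condition transfers from $I_\Delta^m$ to $I_\Delta^m S[x_i^{-1}]$ (which is standard, since localization preserves $(S_2)$) and then correctly translate it into a condition on the link via Lemma \ref{local} and Remark \ref{link}. Once Corollary \ref{OrdLink} is available (proved just above in the excerpt, in parallel with Corollary \ref{SymLink}), the argument runs in lockstep with the symbolic case: the purely algebraic input is localization plus depth, and the purely combinatorial input is the structural Lemma \ref{OrdLocal}. No new ingredient beyond what was developed for Theorem \ref{SymCM} is needed; in particular, since we only need $\Delta$ to be connected (not pure), there is no analogue of the extra purity argument that appeared in Theorem \ref{SymFLC}. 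The induction on $\dim\Delta$ terminates cleanly at $\dim=1$ by \cite[Theorem 2.4]{MiT1}.
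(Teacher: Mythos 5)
Your proposal is correct and follows essentially the same route as the paper: the same reduction of (iii)$\Rightarrow$(iv) via Lemma \ref{OrdLocal} to connectedness (from $\depth \ge 2$ and Lemma \ref{S2}, using $\sqrt{I_\Delta^m}=I_\Delta$) plus locally complete intersection (from Corollary \ref{OrdLink} and induction on $\dim\Delta$), with the base case from \cite{MiT1}. Only two trivial slips: in the base case one has $\dim S/I_\Delta^m=2$ (not $n-1$), which is what makes $(S_2)$ equivalent to Cohen--Macaulayness there, and the relevant reference for ordinary powers is \cite[Corollary 3.5]{MiT1} rather than \cite[Theorem 2.4]{MiT1}.
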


\begin{proof}
(i)$\Rightarrow$(ii)$\Rightarrow$(iii) and (iv)$\Rightarrow$(i) are clear. \par

(iii)$\Rightarrow$(iv). 
If $\dim \D = 1$,  $(S_2)$ means that  $I_{\Delta }^{m}$ is Cohen-Macaulay.
In this case, the assertion follows from \cite[Corollary 3.5]{MiT1}. 
Let $\dim \D \ge 2$. By Lemma \ref{OrdLocal} we only need to show that $\D$ is connected and locally a complete intersection. Since $(S_2)$ implies $\depth S/I_\D^{m} \ge 2$, $\D$ is connected by Lemma \ref{S2}. By Corollary \ref{OrdLink}, $I_{\link_\D\{i\} }^{m}$ satisfies $(S_2)$ for all $i = 1,...,n$. Using induction on $\dim \D$ we may assume that $\link_\D\{i\}$ is a complete intersection for all $i = 1,...,n$. Hence $\D$ is locally a complete intersection.
\end{proof}

There is a complete description of all complexes $\D$ such that $I_\D^2$ is Cohen-Macaulay in \cite[Corollary 3.4]{MiT1} for $\dim \D = 1$ and in \cite[Theorem 3.7]{TrTu} for $\dim \D = 2$. Using these results one can find examples such that $I_\D^2$ is Cohen-Macaulay but $I_\D^m$ is not Cohen-Macaulay for every $m \ge 3$. The 5-cycle is such an example. 
\smallskip

Next we consider the generalized Cohen-Macaulayness of $I_\D^m$. Goto-Takayama \cite[Theorem 2.5]{GT} showed that $I_{\Delta }^{m}$ is generalized Cohen-Macaulay for every integer $m \ge 1$ if and only if $\Delta$ is pure  and locally a complete intersection. 
This result can be improved in the case $\dim \D = 1$ as follows.

\begin{Theorem}
Let $\Delta$ be a graph.
Then the following conditions are equivalent:\par
{\rm (i)} $I_{\Delta }^{m}$ is generalized Cohen-Macaulay for every $m \ge 1$.\par
{\rm (ii)} $I_{\Delta }^{m}$ is generalized Cohen-Macaulay for some $m \ge 3$.\par
{\rm (iii)} $\Delta$ is a union of disjoint paths and cycles.
\end{Theorem}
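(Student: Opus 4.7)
The plan is as follows. I will handle the three equivalences separately. The implication (i)$\Rightarrow$(ii) is trivial. For (iii)$\Rightarrow$(i), I will appeal to the Goto--Takayama theorem cited just above: $I_\Delta^m$ is generalized Cohen--Macaulay for all $m\ge 1$ iff $\Delta$ is pure and locally a complete intersection. A disjoint union of paths and cycles is pure (every facet is an edge, with no isolated vertices by the standing convention), and every vertex has degree $1$ or $2$, so each $\link_\Delta\{i\}$ is a $0$-dimensional complex on at most two vertices, whose Stanley--Reisner ideal is either $(0)$ or principal---hence a complete intersection.

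For (ii)$\Rightarrow$(iii), I will show that if $I_\Delta^m$ is generalized Cohen--Macaulay for some $m\ge 3$, then every vertex has degree at most $2$, so by the standard fact that a graph of maximum degree $2$ decomposes into disjoint paths and cycles, (iii) follows. By definition of generalized Cohen--Macaulayness, $I_\Delta^m S[x_i^{-1}]$ is Cohen--Macaulay for every vertex $i$, so Corollary~\ref{OrdLink} (with $G=\{i\}$) yields that $I_{\link_\Delta\{i\}}^m$ is Cohen--Macaulay. Since $\dim \Delta = 1$, each $\Gamma := \link_\Delta\{i\}$ is a $0$-dimensional complex on $d := \deg_\Delta(i)$ vertices, and it remains to rule out $d\ge 3$.

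The key step is to produce a nonzero socle element in $S/I_\Gamma^m$ for a $0$-dimensional complex $\Gamma$ on $d\ge 3$ vertices, yielding $\depth S/I_\Gamma^m = 0 < 1 = \dim S/I_\Gamma^m$ and hence a contradiction to Cohen--Macaulayness. The ideal $I_\Gamma$ is generated by all products $x_a x_b$ with $a\ne b$, so a monomial $x_1^{e_1}\cdots x_d^{e_d}$ lies in $I_\Gamma^m$ precisely when $(e_1,\dots,e_d)$ is the degree sequence of a loopless multigraph with $m$ edges on $[d]$. By the standard multigraphic realization criterion this is equivalent to $\sum_i e_i = 2m$ together with $\max_i e_i \le m$. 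Taking $u := x_1^{m-1} x_2^{m-1} x_3$, its exponent sum is $2m-1$, so $u\notin I_\Gamma^m$, while for every $j\in[d]$ the monomial $x_j u$ has exponent sum exactly $2m$ and maximum exponent at most $m$ (using $m\ge 2$), hence $x_j u\in I_\Gamma^m$. Thus $u$ represents a nonzero element of $\Soc(S/I_\Gamma^m)$, completing the argument.

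The main obstacle lies in the combinatorial identification of $I_\Gamma^m$ via multigraph degree sequences and the guess of the socle witness $u$; once these are in hand, everything else is a routine application of Corollary~\ref{OrdLink} and the cited Goto--Takayama theorem, together with the elementary graph-theoretic decomposition of graphs of maximum degree $2$.
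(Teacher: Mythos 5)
Your proposal is correct, and it follows the paper's overall skeleton: (i)$\Rightarrow$(ii) trivially, (iii)$\Rightarrow$(i) via the Goto--Takayama criterion \cite{GT} (purity plus locally complete intersection), and (ii)$\Rightarrow$(iii) by passing to vertex links through the characterization of generalized Cohen--Macaulayness by localizations and Corollary \ref{OrdLink}. The one genuine difference is the key step ruling out vertices of degree $d\ge 3$: the paper observes that $I_{\link_\Delta\{i\}}$ is the edge ideal of a complete graph and quotes \cite[Theorem 3.8]{RTY1} to conclude it must be a complete intersection, whereas you prove this special case from scratch by exhibiting the explicit socle element $u=x_1^{m-1}x_2^{m-1}x_3$ in $S/I_\Gamma^m$, forcing $\depth S/I_\Gamma^m=0<1=\dim S/I_\Gamma^m$. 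Your route is more elementary and self-contained (and in fact works already for $m\ge 2$, slightly more than is needed), while the paper's citation buys brevity and consistency with its other references. One small imprecision: a monomial of degree $2m$ lies in $I_\Gamma^m$ iff its exponent vector \emph{dominates} (hence, in equal degree, equals) the degree sequence of a loopless multigraph with $m$ edges; your ``precisely when'' is false for monomials of degree $>2m$ (e.g.\ $x_1^{2m}x_2^{2m}\in I_\Gamma^m$). This does not affect your argument, since $u$ has degree $2m-1<2m$ and so cannot lie in an ideal generated in degree $2m$, and each $x_ju$ has degree exactly $2m$, where your realization criterion ($\max_i e_i\le m$) applies verbatim.
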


\begin{proof}
(i)$\Rightarrow$(ii) is clear. \par

(ii)$\Rightarrow$(iii). 
The generalized Cohen-Macaulayness of $I_{\Delta }^{m}$ implies that  $I_\D^mS[x_i^{-1}]$ is Cohen-Macaulay for $i = 1,...,n$. Therefore, $I_{\link_{\Delta}\{i\} }^{m}$ is Cohen-Macaulay by Corollary \ref{OrdLink}. Since $\dim \D = 1$, $\link_{\Delta}\{i\}$ is a collection of vertices. Hence
$I_{\link_{\Delta}\{i\} }$ is the edge ideal of a complete graph. By  \cite[Theorem 3.8]{RTY1}, the Cohen-Macaulayness of $I_{\link_{\Delta}\{i\} }^{m}$ for some $m \ge 3$ implies that 
$I_{\link_{\Delta}\{i\}}$ is a complete intersection. Thus, $\link_{\Delta}\{i\} $ consists of either one point or two points. From this it follows that every connected component of $\D$ must be a path or a cycle.  \par

(iii)$\Rightarrow$(i).  Condition (iii) implies that $\link_{\Delta}\{i\}$ consists of either one point or two points for $i = 1,...,n$. Hence  $\D$ is locally a complete intersection. Therefore, $I_\D^m$ is generalized Cohen-Macaulay for every $m \ge 1$ by \cite[Theorem 2.5]{GT}. 
\end{proof}

For $\dim \D \ge 2$ we have the following characterization.

\begin{Theorem}\label{OrdFLC}
Let $\Delta$ be a simplicial complex with $\dim \D \ge 2$.
Then the following conditions are equivalent:\par
{\rm (i)} $I_{\Delta }^{m}$ is generalized Cohen-Macaulay for every $m \ge 1$.\par
{\rm (ii)} $I_{\Delta }^{m}$ is generalized Cohen-Macaulay for some $m \ge 3$.\par
{\rm (iii)} $\Delta$ is a union of disjoint complete intersections of the same dimension.
\end{Theorem}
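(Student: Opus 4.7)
The plan is to mirror the proof of Theorem \ref{SymFLC} (the symbolic-power analogue), replacing the symbolic tools with their ordinary counterparts: Corollary \ref{OrdLink} plays the role of Corollary \ref{SymLink}, Theorem \ref{OrdCM} plays the role of Theorem \ref{SymCM}, and Lemma \ref{LCI} (locally complete intersection $=$ union of disjoint complete intersections) plays the role of Corollary \ref{local matroid}. The implication (i)$\Rightarrow$(ii) is immediate, so only the equivalence of (ii) and (iii) requires work.

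For (ii)$\Rightarrow$(iii), I would use the standard characterization that a (homogeneous) ideal $I$ is generalized Cohen-Macaulay if and only if $IS[x_i^{-1}]$ is Cohen-Macaulay for every $i=1,\dots,n$ and $S/I$ is equidimensional. Applied to $I=I_\D^m$, the first condition combined with Corollary \ref{OrdLink} yields that $I_{\link_\D\{i\}}^m$ is Cohen-Macaulay for every vertex $i$. Since $\dim \D \ge 2$ and links preserve dimension drop by one (so $\dim\link_\D\{i\}\ge 1$ whenever $i$ is a vertex of a facet of top dimension; for vertices in smaller facets the link still has a complete intersection Stanley-Reisner ideal trivially), Theorem \ref{OrdCM} forces each $\link_\D\{i\}$ to be a complete intersection. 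Hence $\D$ is locally a complete intersection, and Lemma \ref{LCI} delivers that $\D$ is a union of disjoint complete intersections. The equidimensionality of $S/I_\D^m$ (which is equivalent to equidimensionality of $S/I_\D$ since $\sqrt{I_\D^m}=I_\D$ and the minimal primes coincide) then forces all the connected components to have the same dimension.

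For (iii)$\Rightarrow$(i), I would run the argument in reverse. Given $\D$ a union of disjoint complete intersections of the same dimension, Lemma \ref{LCI} yields that $\D$ is locally a complete intersection, so each $\link_\D\{i\}$ is a complete intersection. By Theorem \ref{OrdCM}, $I_{\link_\D\{i\}}^m$ is Cohen-Macaulay for every $m\ge 1$, and Corollary \ref{OrdLink} lifts this to the Cohen-Macaulayness of $I_\D^m S[x_i^{-1}]$ for every $m\ge 1$ and every vertex $i$. Purity of $\D$ (components of the same dimension) ensures $S/I_\D$, and hence $S/I_\D^m$, is equidimensional. Combining these two properties gives the generalized Cohen-Macaulayness of $I_\D^m$ for every $m\ge 1$.

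The main subtlety I anticipate is handling the equidimensionality and dimensional bookkeeping carefully: one must verify that when reducing to links to invoke Theorem \ref{OrdCM}, the links indeed have dimension at least $1$ (so Theorem \ref{OrdCM} applies), and that pureness of $\D$ together with equality of component dimensions is really what equidimensionality of $S/I_\D^m$ amounts to. Both points are routine for simplicial complexes with $\dim\D\ge 2$ and squarefree radicals, but they are where a naive mimicking of the symbolic proof could go wrong. Otherwise the argument is a clean transcription of the symbolic case.
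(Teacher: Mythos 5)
Your argument is essentially the paper's proof: generalized Cohen--Macaulayness yields Cohen--Macaulayness of $I_\D^m S[x_i^{-1}]$ together with equidimensionality, Corollary \ref{OrdLink} and Theorem \ref{OrdCM} make every vertex link a complete intersection, Lemma \ref{LCI} plus purity give (iii), and the converse reverses these steps exactly as in the paper. The only adjustment is to deduce purity of $\D$ from equidimensionality \emph{before} applying Theorem \ref{OrdCM} to the links, since your parenthetical claim that a vertex whose link has dimension $0$ ``trivially'' has a complete intersection link is false in general (a zero-dimensional link on three or more vertices is the edge ideal of a complete graph, not a complete intersection); purity rules out such vertices, which is precisely how the paper orders the argument.
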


\begin{proof}
(i)$\Rightarrow$(ii) is clear. \par

(ii)$\Rightarrow$(iii). 
The generalized Cohen-Macaulayness of $I_{\Delta }^{m}$ implies that  $I_\D^mS[x_i^{-1}]$ is Cohen-Macaulay for $i = 1,...,n$ and $I_\D$ is equidimensional. The last property means that $\D$ is pure.
By Corollary \ref{OrdLink}, the Cohen-Macaulayness of $I_\D^mS[x_i^{-1}]$ implies that of $I_{\link_{\Delta}\{i\} }^{m}$. Hence $\link_{\Delta}\{i\} $ is a complete intersection by Theorem \ref{OrdCM}. Thus, $\D$ is locally a complete intersection. By Lemma \ref{LCI}, $\Delta$ is a union of disjoint complete intersections. Since $\D$ is pure, these complete intersections have the same dimension. \par

(iii)$\Rightarrow$(i). By Lemma \ref{LCI}, $\Delta$ is locally a complete intersection. Hence $I_{\link_\D\{x_i\}}^m$ is Cohen-Macaulay for every $m \ge 1$, $i = 1,...,n$. By Corollary \ref{OrdLink}, this implies the Cohen-Macaulayness of $I_\D^mS[x_i^{-1}]$ for every $m \ge 1$. Since $\D$ is pure, $S/I_\D$ is equidimensional. Therefore, $I_\D^m$ is generalized Cohen-Macaulay for every $m \ge 1$. 
\end{proof}

The above two theorems are similar but they can't be put together because a path of length $\ge 4$ or a cycle of length $\ge 5$ is not a complete intersection. Compared with the mentioned result of Goto-Takayama, these theorems are stronger in the sense that it gives a combinatorial characterization of the generalized Cohen-Macaulayness of each power $I_{\Delta }^{m}$, $m \ge 3$.  
\smallskip

Now we consider the Buchsbaumness and quasi-Buchsbaumness of ordinary powers of Stanley-Reisner ideals.
By \cite[Theorem 2.1]{TY} we know that $\Delta$ is a complete intersection if $I_{\Delta }^{m}$ is Buchsbaum for all (large) $m \ge 1$. It was asked \cite[Question 2.10]{TY} whether $\Delta$ is a complete intersection if $I_{\Delta }^{m}$ is quasi-Buchsbaum for every $m \ge 1$. In the following we give a positive answer to this question. \smallskip

The case $\dim \D = 1$ was already studied by Minh-Nakamura in \cite{MN2}, where they describe all graphs $\D$ with a Buchsbaum ideal $I_\D^m$. In particular, they showed that $I_{\D}^{m}$ is Buchsbaum for some $m \ge 4$ if and only if $\D$ is a complete intersection. We extend this result for the quasi-Buchsbaumness as follows.

\begin{Theorem}\label{OrdBbm 1}
Let $\Delta$ be a graph. Then the following conditions are equivalent:\par
{\rm (i)} $I_{\Delta }^{m}$ is Cohen-Macaulay for every $m \ge 1$.\par
{\rm (ii)} $I_{\Delta }^{m}$ is Buchsbaum for some $m \ge 4$.\par
{\rm (iii)} $I_{\Delta }^{m}$ is quasi-Buchsbaum for some $m \ge 4$.\par
{\rm (iv)} $\Delta$ is a complete intersection.
\end{Theorem}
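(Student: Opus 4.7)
The implications (i)$\Rightarrow$(ii)$\Rightarrow$(iii) hold because Cohen-Macaulayness implies Buchsbaumness which implies quasi-Buchsbaumness, and (iv)$\Rightarrow$(i) is the Cowsik--Nori theorem \cite{CN}. So the substance is in (iii)$\Rightarrow$(iv). Suppose $I_{\D}^{m}$ is quasi-Buchsbaum for some $m \ge 4$. Since quasi-Buchsbaumness entails generalized Cohen-Macaulayness, the preceding theorem on graphs forces $\D$ to be a disjoint union of paths and cycles.

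The next step is to upgrade this to a \emph{single} path or cycle by showing $\D$ is connected. This is the ordinary-power analogue of Lemma \ref{SymNonQBbm}. Set $I := I_{\D}^{m}$ and $\e := (1,0,\dots,0)$. By Example \ref{D0} we have $\D_{\0} = \D$. I claim $\D_{\e} = \D$ as well for any $m \ge 2$: for a face $F \in \D$, membership $x_{1} \in I S[x_{i}^{-1} : i \in F]$ is equivalent to some generator of $I_{\D}^{m}$ dividing $x_{1} \prod_{i \in F} x_{i}^{t}$ for some $t \ge 0$. Because $F$ is a face, any non-edge contained in $\{1\}\cup F$ must involve the vertex $1$, so a generator of $I_{\D}^{m}$ supported on $\{1\}\cup F$ is a product of $m$ non-edges each containing $1$ and therefore carries the factor $x_{1}^{m}$, which cannot divide $x_{1}$ when $m \ge 2$. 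The case $F \notin \D$ is handled as in Example \ref{D0}. The commutative square of Lemma \ref{SymNonQBbm} then transports multiplication by $x_{1}$ on $H_{\mm}^{1}(S/I)_{\0}$ to the identity map on $\widetilde H^{0}(\D,K)$; quasi-Buchsbaumness forces the left-hand side to vanish, so $\widetilde H^{0}(\D,K) = 0$, i.e.\ $\D$ is connected. Hence $\D$ is a single path or a single cycle.

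It remains to exclude paths on $n \ge 4$ vertices and cycles of length $\ell \ge 5$, since the other options---the $2$- or $3$-vertex path, the triangle, and the $4$-cycle---are all complete intersections. In these excluded cases Minh--Nakamura \cite{MN2} already proved that $I_{\D}^{m}$ fails to be Buchsbaum for $m \ge 4$; I intend to adapt their local-cohomology calculation to the formally weaker quasi-Buchsbaum property. Concretely, for each such $\D$ and each $m \ge 4$, the target is to exhibit a vector $\a \in \ZZ^{n}$ and an index $j \in [n]$ such that the inclusion $\D_{\a+\e_{j}} \hookrightarrow \D_{\a}$ induces a nonzero restriction map on $\widetilde H^{0}$; via Takayama's isomorphism this witnesses a nonzero $x_{j}$-multiplication on $H_{\mm}^{1}(S/I_{\D}^{m})_{\a}$ and so contradicts quasi-Buchsbaumness. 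The natural candidate for $\a$ places negative weight on a ``far'' pair of vertices---the two endpoints of the path, or an antipodal pair on the cycle---with the remaining entries tuned so that both $\D_{\a}$ and $\D_{\a+\e_{j}}$ split into two connected components, each containing one of the distinguished vertices.

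The main obstacle is this last step. While the combinatorial intuition is transparent, making it uniform in $m \ge 4$ requires a careful monomial analysis of when $x^{\a} \notin I_{\D}^{m} S[x_{i}^{-1}: i \in F]$: the generators of $I_{\D}^{m}$ are products of $m$ non-edges, and the exponent vector $\a$ must be chosen so that every such product either has forbidden support or too high a degree along the distinguished pair. The threshold $m \ge 4$ (rather than $m \ge 3$) is exactly what opens the ``room'' in the exponent vector needed for this construction, since for $m = 3$ examples such as the $5$-cycle the power $I_{\D}^{3}$ really is Buchsbaum and no witness of the above form can exist.
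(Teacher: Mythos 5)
Your reduction to the case of a single path or cycle is sound: quasi-Buchsbaum implies generalized Cohen--Macaulay, the preceding theorem on graphs gives a disjoint union of paths and cycles, and your computation $\D_{\e}=\D_{\0}=\D$ together with the Takayama square is exactly the paper's Lemma \ref{OrdQB}, so connectedness is fine. The genuine gap is the last step, which you yourself flag as ``the main obstacle'': excluding paths on at least $4$ vertices and cycles of length at least $5$. The results of Minh--Nakamura \cite{MN2} that you invoke rule out \emph{Buchsbaumness} of $I_{\D}^{m}$ for these graphs, which is strictly stronger than what you need; quasi-Buchsbaumness is weaker, so non-Buchsbaumness gives you nothing, and your proposed witness --- a vector $\a$ with negative weight on a far pair of vertices and an index $j$ with nonzero $x_{j}$-multiplication on $H_{\mm}^{1}(S/I_{\D}^{m})_{\a}$ --- is only announced, not constructed. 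Making such a witness work uniformly for all $m\ge 4$ and all long paths and cycles is a nontrivial monomial analysis, so as it stands the implication (iii)$\Rightarrow$(iv) is not proved.

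The paper closes exactly this gap by a different and much shorter route that avoids any computation with ordinary powers: from the exact sequence $0 \to I_{\D}^{(m)}/I_{\D}^{m} \to S/I_{\D}^{m} \to S/I_{\D}^{(m)} \to 0$ one has $H_{\mm}^{0}(S/I_{\D}^{m}) = I_{\D}^{(m)}/I_{\D}^{m}$ and $H_{\mm}^{i}(S/I_{\D}^{(m)}) \cong H_{\mm}^{i}(S/I_{\D}^{m})$ for $i\ge 1$, so quasi-Buchsbaumness of $I_{\D}^{m}$ yields $\mm H_{\mm}^{1}(S/I_{\D}^{(m)})=0$; since $\dim S/I_{\D}^{(m)}=2$, the symbolic power is then Buchsbaum by \cite[Proposition 2.12]{SV}, hence Cohen--Macaulay for $m\ge 4$ by \cite[Theorem 3.7]{MN1} (this is precisely where $m\ge 4$ enters). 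Then \cite[Theorem 2.4]{MiT1} forces every pair of disjoint edges into a $4$-cycle, while generalized Cohen--Macaulayness plus \cite{GT} and \cite[Proposition 1.11]{TY} force $\D$ to be a path or a cycle; together these leave only an $r$-cycle with $r\le 4$ or a path of length $2$, all complete intersections. If you want to salvage your approach, replacing your unexecuted Takayama-complex construction by this passage to the symbolic power is the cleanest fix; otherwise you must actually carry out the exhibit-a-nonzero-$x_{j}$-multiplication argument for long paths and cycles, which is the hard content you have deferred.
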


\begin{proof}
We only need to prove (iii)$\Rightarrow$(iv). Consider the exact sequence
$$0 \to I_{\Delta }^{(m)}/I_\D^m \to S/I_\D^m \to S/I_{\Delta }^{(m)} \to 0.$$
Since $H_\mm^0(S/I_\D^m) = I_{\Delta }^{(m)}/I_\D^m$, $H_\mm^0(S/I_\D^{(m)}) = 0$ and $H_\mm^i(S/I_\D^{(m)}) = H_\mm^i(S/I_\D^m)$ for $i \ge 1$. Hence
the quasi-Buchsbaumness of $I_{\Delta }^{m}$  implies that $\mm H_\mm^1(S/I_{\Delta }^{(m)}) = 0$.
Since $\dim S/I_{\Delta }^{(m)} = 2$, $I_{\Delta }^{(m)}$ is Buchsbaum by \cite[Proposition 2.12]{SV}.
By \cite[Theorem 3.7]{MN1}, this implies the Cohen-Macaulayness of $I_{\Delta }^{(m)}$.
Hence every pair of disjoint edges of $\D$ is contained in a 4-cycle by \cite[Theorem 2.4]{MiT1}. On the other hand, $\D$ is locally a complete intersection by Goto-Takayama \cite[Theorem 2.5]{GT}. By \cite[Proposition 1,11]{TY}, $\D$ is either an $r$-cycle, $r \ge 3$, or a path of length $r \ge 2$. Thus, $\D$ must be an $r$-cycle, $r \le 4$, or a path of length 2, which are complete intersections.
\end{proof} 

We can not lower $m$ to 3 in (ii) and (iii) of the above theorem. In fact, if $\D$ is a 5-cycle, then $I_\D^3$ is Buchsbaum but not Cohen-Macaulay by \cite[Theorem 4.11]{MN2}.
\smallskip 

For  $\dim \D \ge 2$ we need the following observation.

\begin{Lemma} \label{OrdQB}
Let $\Delta$ be a  simplicial complex with $\dim \Delta \ge 1.$ Then $\D$ is connected if
$I_{\Delta }^{m}$ is quasi-Buchsbaum for some $m \ge 2$.
\end{Lemma}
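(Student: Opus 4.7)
The plan is to parallel the proof of Lemma~\ref{SymNonQBbm} for ordinary powers, replacing $I_\D^{(m)}$ by $I=I_\D^m$. Set $\e=(1,0,\ldots,0)\in\ZZ^n$. By Example~\ref{D0} we already have $\D_\0=\D$, so everything hinges on identifying $\D_\e$.

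First I would verify that $\D_\e=\D$ for every $m\ge 2$. If $F\subseteq[n]$ is not a face of $\D$, then $F$ contains a minimal nonface of $\D$, so the corresponding squarefree monomial becomes a unit after inverting the variables in $F$; hence $I_\D S[x_i^{-1}|\ i\in F]$ is the unit ideal, and so is its $m$-th power. In particular $x_1\in I_\D^m S[x_i^{-1}|\ i\in F]$, so $F\notin\D_\e$. Conversely, for a facet $F\in\F(\D)$, Lemma~\ref{local} gives $I_\D S[x_i^{-1}|\ i\in F]=P_{\overline F}S[x_i^{-1}|\ i\in F]$, and since localization commutes with taking powers,
\[
  I_\D^m S[x_i^{-1}|\ i\in F]=P_{\overline F}^m S[x_i^{-1}|\ i\in F].
\]
Viewing the right-hand side as the $m$-th power of the ideal $(x_j|\ j\notin F)$ in the Laurent polynomial ring $K[x_i^{\pm 1}|\ i\in F][x_j|\ j\notin F]$ and counting total degree in the $\overline F$-variables, one sees that $x_1$ has $\overline F$-degree at most $1<m$, so $x_1\notin I_\D^m S[x_i^{-1}|\ i\in F]$. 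Thus $F\in\D_\e$. Since $\D$ and $\D_\e$ are simplicial complexes sharing the same facets and the same non-faces, they coincide.

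Given $\D_\e=\D_\0=\D$, I would then invoke the commutative diagram of \cite[Lemma 2.3]{MN1} used in Lemma~\ref{SymNonQBbm}: via Takayama's isomorphism (Lemma~\ref{Takayama}) the multiplication-by-$x_1$ map $H_\mm^1(S/I)_\0\to H_\mm^1(S/I)_\e$ is identified with the natural map $\widetilde H^0(\D_\0,K)\to\widetilde H^0(\D_\e,K)$ induced by the inclusion $\D_\e\hookrightarrow\D_\0$, which here is the identity. The quasi-Buchsbaum hypothesis yields $x_1 H_\mm^1(S/I)_\0=0$, and hence $\widetilde H^0(\D,K)=0$, i.e.\ $\D$ is connected.

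The only step that is not purely formal is the identification $\D_\e=\D$; once this is established the Takayama-diagram argument of Lemma~\ref{SymNonQBbm} carries over verbatim. The technical obstacle is really just the $\overline F$-degree count ensuring $x_1\notin P_{\overline F}^m S[x_i^{-1}|\ i\in F]$, which is where the hypothesis $m\ge 2$ is essential.
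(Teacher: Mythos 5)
Your proposal is correct and follows essentially the paper's own route: the paper proves Lemma \ref{OrdQB} simply by noting it is shown ``similarly as for Lemma \ref{SymNonQBbm}'', and your write-up is exactly that adaptation, checking $\Delta_{\bf e}=\Delta_{\bf 0}=\Delta$ for $I=I_\Delta^m$ (using that localization commutes with ordinary powers together with the $\overline F$-degree count, which is where $m\ge 2$ enters) and then running the Takayama/Minh--Nakamura diagram argument verbatim.
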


\begin{proof}
This can be shown similarly as for Lemma \ref{SymNonQBbm}.
\end{proof}

\begin{Theorem}\label{OrdBbm}
Let $\Delta$ be a simplicial complex with $\dim \Delta \ge 2.$
Then the following conditions are equivalent:\par
{\rm (i)} $I_{\Delta }^{m}$ is Cohen-Macaulay for every $m \ge 1$.\par
{\rm (ii)} $I_{\Delta }^{m}$ is Buchsbaum for some $m \ge 3$.\par
{\rm (iii)} $I_{\Delta }^{m}$ is quasi-Buchsbaum for some $m \ge 3$.\par
{\rm (iv)} $\Delta$ is a complete intersection.
\end{Theorem}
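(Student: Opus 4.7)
The plan is to mimic exactly the strategy used for the symbolic analogue in Theorem \ref{SymBbm}, combining the generalized Cohen-Macaulay classification (Theorem \ref{OrdFLC}) with the connectedness criterion (Lemma \ref{OrdQB}). The implications (i)$\Rightarrow$(ii)$\Rightarrow$(iii) are immediate from the chain Cohen-Macaulay $\Rightarrow$ Buchsbaum $\Rightarrow$ quasi-Buchsbaum, and (iv)$\Rightarrow$(i) is the classical result of Cowsik-Nori already invoked in Theorem \ref{OrdCM}. So the entire content is the implication (iii)$\Rightarrow$(iv).

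For (iii)$\Rightarrow$(iv), I would first recall that every quasi-Buchsbaum ideal is generalized Cohen-Macaulay (all the local cohomology modules below the top degree are of finite length because they are annihilated by $\mm$ and finitely generated over $S/\mm$, using that they vanish in large degree). Hence $I_\Delta^m$ is generalized Cohen-Macaulay for some $m \ge 3$. Applying Theorem \ref{OrdFLC}, the complex $\Delta$ is a union of disjoint complete intersections of the same dimension. On the other hand, Lemma \ref{OrdQB} tells us that the quasi-Buchsbaumness of $I_\Delta^m$ for any $m \ge 2$ forces $\Delta$ to be connected. A disjoint union of complete intersections that is connected consists of a single component, so $\Delta$ itself is a complete intersection.

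The main potential obstacle I anticipate is nothing substantial here, since the heavy lifting (Theorem \ref{OrdFLC} and Lemma \ref{OrdQB}) has already been established earlier in the section. The only subtlety is to make sure the hypothesis $\dim \Delta \ge 2$ is used in the right place: it is needed so that Theorem \ref{OrdFLC} applies (the characterization in terms of disjoint complete intersections of the same dimension requires $\dim \Delta \ge 2$), and it also ensures that the generalized Cohen-Macaulayness extracted from quasi-Buchsbaumness has genuine force. Everything else is a short three-line chain of citations, so the whole proof should fit in a short paragraph paralleling the proof of Theorem \ref{SymBbm}.
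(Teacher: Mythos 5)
Your proposal is correct and follows exactly the paper's own argument: quasi-Buchsbaumness gives generalized Cohen-Macaulayness, Theorem \ref{OrdFLC} then shows $\Delta$ is a disjoint union of complete intersections, and Lemma \ref{OrdQB} supplies connectedness, so $\Delta$ is a complete intersection; the remaining implications are the standard chain and Cowsik--Nori. Nothing further is needed.
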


\begin{proof}
(i)$\Rightarrow$(ii)$\Rightarrow$(iii) is clear.

(iii)$\Rightarrow$(iv).
Since $S/I_{\Delta }^{m}$ is generalized Cohen-Macaulay, $\D$ is a union of disjoint complete intersections by Theorem \ref{OrdFLC}.
By Lemma \ref{OrdQB}, the quasi-Buchsbaumness of $I_{\Delta }^{m}$ implies that $\Delta$ is connected.
Therefore, $\D$ is a complete intersection.

(iv)$\Rightarrow$(i) is well known. 
\end{proof}

Below we give an example where $\dim \D = 2$ and $I_{\Delta }^2$ is Buchsbaum but not Cohen-Macaulay.

\begin{Example}
{\rm Let $\D$ be the simplicial complex generated by the sets
$$\{1,2,3\},\{1,2,4\},\{1,3,4\},\{2,3,4\},\{3,4,5\}$$ 
and $S = K[x_1,x_2,x_3,x_4,x_5]$. It is easy to check that
\begin{align*}
I_\D^2 & = (x_1x_2x_3x_4,x_1x_5,x_2x_5)^2,\\
I_\D^{(2)} & = (x_1x_2x_3x_4,x_1x_5,x_2x_5)^2 + (x_1x_2x_3x_4x_5).
\end{align*}
From this it follows that $\mm I_\D^{(2)} \subseteq I_\D^2$. On ther hand, $I_\D^{(2)}$ is Cohen-Macaulay by \cite[Example 4.7]{MiT2}. This implies $H_\mm^0(S/I_\D^2) = I_\D^{(2)}/I_\D^2$, hence
$\mm H_\mm^0(S/I_\D^2) = 0$. Therefore, $I_\D^2$ is Buchsbaum \cite[Proposition 2.12]{SV} and not Cohen-Macaulay.}
\end{Example}

\section{Applications}

In this section we investigate the Cohen-Macaulayness of symbolic powers of the cover ideal  and the facet ideal of a simplicial complex.
\smallskip

Let $\D$ be a simplicial complex on the vertex set $V= [n]$ and $S = k[x_1,...,x_n]$. 
The {\it facet ideal} $I(\D)$ is defined as the ideal generated by all squarefree monomials
$x_{i_1}\cdots x_{i_r}$, $\{i_1,...,i_r\} \in \F(D)$ (see \cite{F}).  For instance, squarefree monomial ideals generated in degree $r$ are just facet ideals of pure complexes of dimension $r-1$. 
\smallskip

Let $\D^*$ denote the simplicial complex with $I_{\D^*} = I(\D)$.
By Theorem \ref{SymCM}, to study the Cohen-Macaulayness of large symbolic powers of $I(\D)$ we have to study when $\D^*$ is a matroid. Note that facets of $\D$ are minimal nonfaces of $\D^*$ and that $\D$ and $\D^*$ have the same vertex set. \smallskip

If $\dim \D = 1$, we may consider $\D$ as a graph and and $I(\D)$ as the Stanley-Reisner ideal of a flag complex.
This case was already dealt with in \cite{RTY1}.  

\begin{Theorem}\cite[Theorem 3.6]{RTY1}
Let $\G$ be a graph.
Then the following conditions are equivalent:\par
{\rm (i)} $I(\G)^{(m)}$ is Cohen-Macaulay for every $m\ge 1$.\par
{\rm (ii)} $I(\G)^{(m)}$ is Cohen-Macaulay for some $m\ge 3$.\par
{\rm (iii)} $\G$ is a union of disjoint complete graphs.
\end{Theorem}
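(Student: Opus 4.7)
Let $\G^{*}$ denote the independence complex of $\G$, i.e., the simplicial complex on $V(\G)$ whose faces are the independent vertex subsets of $\G$. Since the minimal nonfaces of $\G^{*}$ are precisely the edges of $\G$, one has $I(\G)=I_{\G^{*}}$ and hence $I(\G)^{(m)}=I_{\G^{*}}^{(m)}$. The implication (i)$\Rightarrow$(ii) is trivial, and the plan is to use Theorem~\ref{SymCM} (plus a brief dimension-zero edge case) to translate (ii) into the purely combinatorial statement that $\G^{*}$ is a matroid, and then identify such complexes with disjoint unions of complete graphs.

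For (iii)$\Rightarrow$(i): if $\G=K_{n_{1}}\sqcup\cdots\sqcup K_{n_{s}}$, then the independent sets of $\G$ are exactly the subsets meeting each component in at most one vertex, so $\G^{*}$ is the direct sum of the rank-one uniform matroids $U_{1,n_{i}}$, and in particular a matroid. Then $I(\G)^{(m)}=I_{\G^{*}}^{(m)}$ is Cohen-Macaulay for every $m\ge1$ by \cite[Theorem~3.5]{MiT2} (or \cite[Theorem~2.1]{Va}).

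For (ii)$\Rightarrow$(iii), suppose $I(\G)^{(m)}$ is Cohen-Macaulay for some $m\ge 3$. If $\dim\G^{*}=0$, then $\G$ has no independent set of size $2$, so $\G$ itself is a complete graph and (iii) holds. Otherwise $\dim\G^{*}\ge 1$ and Theorem~\ref{SymCM} yields that $\G^{*}$ is a matroid; it remains to show combinatorially that every connected component of $\G$ is complete. If some component $H$ were not, I would pick a non-adjacent pair $u,v\in V(H)$ with $d_{H}(u,v)$ minimum; this forces $d_{H}(u,v)=2$, so a shortest $u$-$v$ path produces a vertex $w$ with $\{u,w\},\{w,v\}\in\G$ but $\{u,v\}\notin\G$. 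Setting $F:=\{w\}$ and $F':=\{u,v\}$, both lie in $\G^{*}$ with $|F\setminus F'|=1$ and $|F'\setminus F|=2$; applying the exchange criterion of Lemma~\ref{matroid} to $(F',F)$ forces either $\{w,u\}$ or $\{w,v\}$ to be a face of $\G^{*}$, contradicting that both are edges of $\G$.

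The main difficulty is not conceptual depth but careful handling of the degenerate case: Theorem~\ref{SymCM} requires $\dim\D\ge 1$, so the possibility $\G=K_{n}$ must be recognized separately (as above) before the matroid criterion of Lemma~\ref{matroid} can be invoked.
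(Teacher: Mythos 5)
Your proposal is correct, and it reaches the key combinatorial equivalence (``$\G^{*}$ is a matroid if and only if $\G$ is a disjoint union of complete graphs'') by a genuinely different route than the paper. The paper also reduces to this equivalence via Theorem \ref{SymCM}, but then disposes of it in one stroke by viewing $\G^{*}$ as the clique complex of the complement graph $\overline{\G}$ and invoking the Kashiwabara--Okamoto--Uno theorem \cite[Theorem 3.3]{KOU} on matroid representations of clique complexes: a partition of $V(\overline{\G})$ into independent sets covering all nonedges of $\overline{\G}$ is exactly a partition of $V(\G)$ into cliques covering all edges of $\G$. You instead prove both directions elementarily: for (iii)$\Rightarrow$(i) you observe that the independence complex of a disjoint union of complete graphs is a join (direct sum) of rank-one uniform matroids, hence a matroid, and quote \cite[Theorem 3.5]{MiT2} or \cite{Va}; for (ii)$\Rightarrow$(iii) you apply the exchange criterion of Lemma \ref{matroid} to $F=\{w\}$ and $G=\{u,v\}$ coming from an induced path $u$--$w$--$v$ in a non-complete component, which is a clean contradiction. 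A further point in your favor is that you explicitly treat the degenerate case $\dim\G^{*}=0$ (i.e.\ $\G$ complete), which Theorem \ref{SymCM} excludes by its hypothesis $\dim\D\ge 1$ and which the paper's one-line reduction passes over silently. In short, the paper's argument is shorter but leans on an external matroid-theoretic theorem, while yours is self-contained, using only Lemma \ref{matroid} and facts already in the paper; both are valid proofs.
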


The original proof of this theorem didn't involve matroids.
However, using a recent result in matroid theory we can give another proof as follows.

\begin{proof}
It suffices to show that $\G^*$ is a matroid if and only if $\G$ is a union of disjoint complete graphs.
Note first that $\G^*$ is the clique complex of the graph $\overline \G$ of the nonedges of $\G$. 
By \cite[Theorem 3.3]{KOU}, the clique complex of a graph is a matroid if and only if there is a partition of the vertices into independent sets (which contain no adjacent vertices) such that every nonedge of the graph is contained in an independent set. But an independent set of $\overline \G$ is just a complete graph in $\G$. 
\end{proof}

Now we are going to prove  a similar characterization for the case that $\D$ is a pure complex with $\dim \D = 2$.
\smallskip

In the following we call a simplicial complex \textit{$r$-uniform} if it is generated by the $r$-dimensional faces of a simplex. Complete graphs are just $1$-uniform matroids. 

\begin{Theorem} \label{2-uniform}
Let $\D$ be a pure complex with $\dim \D = 2$.
Then the following conditions are equivalent:\par
{\rm (i)} $I(\Delta)^{(m)}$ is Cohen-Macaulay for every $m\ge 1$.\par
{\rm (ii)} $I(\Delta)^{(m)}$ is Cohen-Macaulay for some $m\ge 3$.\par
{\rm (iii)} $\D$ is a union of disjoint 2-uniform matroids.
\end{Theorem}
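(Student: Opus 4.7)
The plan is to use Theorem~\ref{SymCM} applied to $\D^*$ (the complex defined by $I_{\D^*}=I(\D)$) to identify conditions (i) and (ii) with the assertion that $\D^*$ is a matroid, and then translate this into the partition structure (iii). The implication (i)$\Rightarrow$(ii) is trivial. For (iii)$\Rightarrow$(ii): if $V(\D)=V_1\sqcup\cdots\sqcup V_s$ and $\D$ is the disjoint union of the $2$-uniform matroids on the $V_j$, the minimal nonfaces of $\D^*$ are exactly the $3$-subsets lying in some $V_j$, so the faces of $\D^*$ are those $F\subseteq V(\D)$ with $|F\cap V_j|\le 2$ for every $j$; this is the partition matroid, and Theorem~\ref{SymCM} yields (i) and (ii).

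For (ii)$\Rightarrow$(iii), Theorem~\ref{SymCM} gives that $\D^*$ is a matroid. Since $\D$ is pure $2$-dimensional, every minimal nonface of $\D^*$ (i.e., every circuit of this matroid) has size exactly $3$, and every vertex of $\D$ lies in at least one such. For each pair of distinct vertices $u,v\in V(\D)$ set
\[
L(u,v) := \{u,v\}\cup\{w\in V(\D) : \{u,v,w\}\in \F(\D)\}.
\]
The main combinatorial step is to prove that the lines of size $\ge 3$ partition $V(\D)$. This rests on: (a)~any two distinct $u',v'\in L(u,v)$ satisfy $L(u',v')=L(u,v)$; and (b)~two distinct lines of size $\ge 3$ are disjoint. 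Fact (a) follows from the matroid circuit elimination axiom: two distinct $3$-circuits $\{u,v,z\}$ and $\{u,v,z'\}$ sharing the pair $\{u,v\}$ force a further $3$-circuit inside $\{u,z,z'\}$ -- which must be $\{u,z,z'\}$ itself -- and a short verification then gives $L(u',v')=L(u,v)$.

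For (b), suppose $L_1\ne L_2$ share a vertex $x$; pick $y_1\ne y_2$ in $L_1\setminus\{x\}$ and $y_1'\ne y_2'$ in $L_2\setminus\{x\}$. By (a), any coincidence among the four $y$'s would yield two common points of $L_1\cap L_2$ and hence $L_1=L_2$, so we may assume they are pairwise distinct. Apply Lemma~\ref{matroid} to $F=\{x,y_1,y_1'\}$ and $G=\{y_1,y_2,y_1',y_2'\}$: if any $3$-subset of $F$ or $G$ were a facet of $\D$, then (a) would again force $L_1=L_2$, so both $F$ and $G$ are faces of $\D^*$. But $F\cup\{y_2\}$ contains the facet $\{x,y_1,y_2\}$ and $F\cup\{y_2'\}$ contains the facet $\{x,y_1',y_2'\}$, so both are nonfaces of $\D^*$, contradicting Lemma~\ref{matroid}. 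This disjointness argument is the technical heart of the proof. Combining (a), (b) and the fact that every vertex lies on a line of size $\ge 3$, these lines partition $V(\D)$ into classes $V_1,\ldots,V_s$ of size $\ge 3$; and for distinct $u,v,w$ in one class, $L(u,v)$ and $L(u,w)$ both contain $u$ and have size $\ge 3$, so (b) forces $L(u,v)=L(u,w)$ and hence $\{u,v,w\}\in\F(\D)$. Thus each $V_j$ carries the full $2$-uniform matroid, proving (iii).
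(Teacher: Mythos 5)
Your proposal is correct, and its combinatorial core is genuinely different from the paper's. Both arguments begin the same way, using Theorem~\ref{SymCM} to translate (i) and (ii) into ``$\D^*$ is a matroid,'' and both identify the matroid structure with a partition of the vertex set. But the paper gets there through Proposition~\ref{decomposition 2}, a standalone structure theorem (a matroid all of whose minimal nonfaces have dimension $2$ is a join of $1$-uniform matroids and possibly a simplex), proved via Lemma~\ref{links} (equality of links at the maximal proper subsets of a circuit) and an inductive peeling argument $\D^* = \D_1 * \G_1$; the purity of $\D$ then rules out the simplex factor, and the join decomposition is translated back into the disjoint union of $2$-uniform matroids. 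You instead work directly on the circuit side: since every circuit of $\D^*$ is a facet of $\D$ and has size exactly $3$, you define the lines $L(u,v)$, use circuit elimination (a standard matroid fact, though not stated in the paper, and easily replaceable by an argument from Lemma~\ref{matroid}) to show all triples inside a line of size $\ge 3$ are circuits and that lines are determined by any two of their points, and then use the exchange criterion of Lemma~\ref{matroid} on the faces $F=\{x,y_1,y_1'\}$, $G=\{y_1,y_2,y_1',y_2'\}$ to force distinct big lines to be disjoint; this step is correct as written, and since every vertex lies in a facet the big lines partition $V(\D)$ into blocks carrying full $2$-uniform matroids (the only unstated point --- that no facet meets two blocks --- is immediate from your fact (a), since a facet $\{u,v,w\}$ puts $w\in L(u,v)$, which is the block of $u,v$). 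What each route buys: the paper's Proposition~\ref{decomposition 2} is a reusable structural result of independent interest that also handles the case of a simplex factor (i.e., vertices lying in no generator), while your argument is shorter, avoids the join formalism and the induction, and proves the needed partition statement directly in terms of the facets of $\D$, at the cost of being tailored to this specific situation.
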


As observed above, it suffices to show that $\D^*$ is a matroid if and only if $\D$ is a union of disjoint 2-uniform matroids. The proof is based on the following observation.

\begin{Lemma} \label{links}
Let $\Delta$ be a matroid. Let $F, G$ be two maximal proper subsets of a minimal nonface of $\Delta$. 
Then $\link_{\Delta} F = \link_{\Delta} G$.
\end{Lemma}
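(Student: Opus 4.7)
The plan is to reduce to the case where the minimal nonface is $\{a,b\}$ and $F=\{b\},\ G=\{a\}$, and then to apply Lemma \ref{matroid} in an induction.

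For the reduction, write $N$ for the minimal nonface containing $F$ and $G$. We may assume $F\ne G$, so $F=N\setminus\{a\}$ and $G=N\setminus\{b\}$ for distinct $a,b\in N$. Put $C:=F\cap G=N\setminus\{a,b\}$; then $C$ is a face of $\Delta$ (a proper subset of the minimal nonface $N$). Since the link of a face of a matroid is again a matroid, $\Delta':=\link_\Delta C$ is a matroid, and the standard identity $\link_\Delta(C\cup B)=\link_{\Delta'}B$ gives $\link_\Delta F=\link_{\Delta'}\{b\}$ and $\link_\Delta G=\link_{\Delta'}\{a\}$. Also $\{a,b\}$ is a \emph{minimal} nonface of $\Delta'$: it is a nonface because $C\cup\{a,b\}=N\notin\Delta$, while $\{a\}$ and $\{b\}$ are faces of $\Delta'$ since $C\cup\{a\}=G$ and $C\cup\{b\}=F$ are faces of $\Delta$. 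Hence it suffices to prove the reduced claim: \emph{if $\{a,b\}$ is a minimal nonface of a matroid $\Delta'$, then $\link_{\Delta'}\{a\}=\link_{\Delta'}\{b\}$.}

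By symmetry it is enough to show that every $H\in\link_{\Delta'}\{a\}$ lies in $\link_{\Delta'}\{b\}$. First, $a\notin H$ by definition, and $b\notin H$ because otherwise $\{a,b\}\subseteq H\cup\{a\}\in\Delta'$, contradicting that $\{a,b\}$ is a nonface. So the task is to show $H\cup\{b\}\in\Delta'$, which I would do by induction on $|H|$. The base $H=\emptyset$ is immediate since $\{b\}$ is a face. For $|H|\ge 1$, pick any $h\in H$ and set $H_1:=H\setminus\{h\}$; by the inductive hypothesis $H_1\cup\{b\}\in\Delta'$. Apply Lemma \ref{matroid} to the two faces $P:=H_1\cup\{b\}$ and $Q:=H\cup\{a\}=H_1\cup\{h,a\}$: a direct check shows $P\setminus Q=\{b\}$ and $Q\setminus P=\{h,a\}$, so there exists $i\in\{h,a\}$ with $P\cup\{i\}\in\Delta'$. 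The choice $i=a$ would yield $\{a,b\}\subseteq P\cup\{a\}$, contradicting that $\{a,b\}$ is a nonface of $\Delta'$. Therefore $i=h$, giving $P\cup\{h\}=H\cup\{b\}\in\Delta'$, as required.

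The main obstacle is choosing the right pair of faces to feed into Lemma \ref{matroid} so that one of the two alternatives offered by that lemma is blocked by the nonface $\{a,b\}$; pairing the ``$b$-version with $h$ removed'' against the original ``$a$-version'' of $H$ makes this work. The rest is a routine reduction by passing to $\link_\Delta(F\cap G)$.
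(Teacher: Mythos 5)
Your proof is correct, and it takes a route that differs from the paper's in two respects. The paper argues directly with $F$, $G$, and a face $H\in\link_\Delta F$: since $|F\cup H|=|G|+|H|>|G|$, iterated use of the augmentation axiom (iii) in the definition of a matroid extends $G$ inside $G\cup F\cup H$ to a face $L$ with $|L|=|F\cup H|$; because $F\cup G$ is a minimal nonface, $L$ must avoid the unique vertex of $F\setminus G$, so $L\subseteq G\cup H$, and the cardinality count forces $L=G\cup H$ and $G\cap H=\emptyset$, i.e.\ $H\in\link_\Delta G$. You instead first localize at $C=F\cap G$ (using the fact, stated in Section 2, that links of faces of matroids are matroids) to reduce to a two-element minimal nonface $\{a,b\}$, and then move faces from $\link_{\Delta'}\{a\}$ to $\link_{\Delta'}\{b\}$ one vertex at a time by induction, invoking the exchange criterion of Lemma \ref{matroid} for the pair $P=H_1\cup\{b\}$, $Q=H\cup\{a\}$ and using the nonface $\{a,b\}$ to block the exchange $i=a$. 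Your verifications ($b\notin H$, the hypothesis $|P\setminus Q|=1$, $|Q\setminus P|=2$, minimality of $\{a,b\}$ in $\Delta'$) are all in order, so there is no gap. The trade-off: the paper's argument is a one-shot augmentation-plus-counting proof using only the matroid axioms, while yours leans on the auxiliary facts about links and on Lemma \ref{matroid} but keeps every step local, and the induction makes especially transparent how the minimal nonface forbids the unwanted exchange.
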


\begin{proof}
By symmetry, it suffices to show 
that every non-empty face $H$ of $\link_{\Delta}  F$ is also a face of $\link_{\Delta}  G$.
Note that $F \cup H \in \Delta$ and  $F \cap H = \emptyset$. Since $|F \cup H| = |F| + |H| = |G|+|H| > |G|$,
we can extend $G$ by elements of $F \cup H$ to a face $L$ of $\Delta$ such that $|L| = |F \cup H|$.
Since $|F \cap G| = |F| -1$, there is only a vertex of $F$ not contained in $G$.
Since $F \cup G$ is a minimal nonface of $\Delta$, $L$ does not contain this vertex. 
Therefore, $L \subseteq G \cup H$. 
Since $|L| = |G|+|H|$, we must have $L = G \cup H$ and $G \cap H = \emptyset$.
This shows $H \in \link_{\Delta}  G$.
\end{proof}

Using the above lemma we prove  the following structure theorem for matroids whose minimal nonfaces have dimension 2. \smallskip

Recall that the {\it join} $\D_1*\D_2$ of two simplicial complexes $\D_1$ and $\D_2$ on different vertex sets is the simplicial complex whose faces are the unions of two faces of $\D_1$ and $\D_2$. 

\begin{Proposition} \label{decomposition 2}
Let $\D$ be a simplicial complex whose minimal nonfaces have dimension 2.
Then $\D$ is a matroid if and only if $\D$ is the join of 1-uniform matroids with possibly a simplex.
\end{Proposition}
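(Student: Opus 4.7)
My plan is to prove the biconditional by handling each direction separately, with the nontrivial $(\Rightarrow)$ direction hinging on a key structural claim derived from the link-symmetry of Lemma~\ref{links} and the exchange criterion of Lemma~\ref{matroid}. For $(\Leftarrow)$, suppose $\D = \sigma_W * \D_1 * \cdots * \D_k$, where each $\D_i$ is a $1$-uniform matroid on a vertex set $V_i$ and $\sigma_W$ is a simplex on a disjoint vertex set $W$. Then $F \in \D$ iff $|F \cap V_i| \le 2$ for every $i$, so the minimal nonfaces of $\D$ are precisely the $3$-subsets of the individual $V_i$, all of dimension $2$, and the matroid criterion of Lemma~\ref{matroid} is verified componentwise.

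For $(\Rightarrow)$, let $W$ be the set of vertices that lie in no minimal nonface of $\D$. A short argument shows $v \in W$ iff $v$ lies in every facet: if $v \notin F$ for some facet $F$, then $F \cup \{v\}$ is a nonface whose minimal nonface must use $v$; conversely, if $v$ belongs to a minimal nonface $N$, then a facet containing the face $N \setminus \{v\}$ must omit $v$. Hence $\D = \sigma_W * \D'$ with $\D'$ the induced subcomplex on $V(\D) \setminus W$. On $V(\D) \setminus W$, define $u \sim v$ to mean $u = v$ or $\{u, v, z\}$ is a minimal nonface for some $z$. The key claim is: \emph{if $u, v, w$ are three distinct vertices with $u \sim v$ and $v \sim w$, then $\{u, v, w\}$ is itself a minimal nonface.} This yields both the transitivity of $\sim$ and the fact that every $3$-subset of an equivalence class of $\sim$ is a minimal nonface; since any minimal nonface of $\D'$ has its three vertices in a common class, it follows that $\D' = \D_1 * \cdots * \D_k$, each $\D_i$ being the $1$-uniform matroid on its equivalence class, exactly as in $(\Leftarrow)$.

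To prove the key claim, let $\{u, v, a\}$ and $\{v, w, b\}$ be minimal nonfaces witnessing $u \sim v$ and $v \sim w$. The subcases $a = w$ and $b = u$ are immediate. If $a = b$, then Lemma~\ref{links} applied to $\{v, w, a\}$ gives $\link_\D \{v, w\}$ and $\link_\D \{v, a\}$ the same vertex set; since $u \notin \link_\D \{v, a\}$ (as $\{u, v, a\}$ is a nonface), also $u \notin \link_\D \{v, w\}$, so $\{u, v, w\}$ is a minimal nonface. The hard case is when $u, v, w, a, b$ are pairwise distinct. Assume for contradiction that $\{u, v, w\} \in \D$. Applying Lemma~\ref{links} to $\{u, v, a\}$ (which equates $\link_\D \{u, v\} = \link_\D \{u, a\} = \link_\D \{v, a\}$) and to $\{v, w, b\}$ (which equates $\link_\D \{v, w\} = \link_\D \{v, b\} = \link_\D \{w, b\}$), and chasing the vertex-set equalities from the face $\{u, v, w\}$, one finds that each of $\{u, a, w\}, \{v, a, w\}, \{u, v, b\}, \{u, w, b\}, \{u, a, b\}, \{v, a, b\}, \{w, a, b\}$ is a face of $\D$. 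In particular every $3$-subset of $\{u, a, w, b\}$ is a face, so $\{u, a, w, b\}$ itself must be a face (since a nonface of size $4$ would contain a minimal nonface of size $3$, impossible). Finally, apply Lemma~\ref{matroid} to $F := \{u, v, w\}$ and $G := \{u, a, w, b\}$, where $|F \setminus G| = 1$ and $|G \setminus F| = 2$: there exists $i \in \{a, b\}$ with $F \cup \{i\} \in \D$, yet $F \cup \{a\} \supset \{u, v, a\}$ and $F \cup \{b\} \supset \{v, w, b\}$ are both nonfaces, a contradiction.

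The main obstacle is exactly this last case: distinctness of all five elements precludes the easy collapse available when the two witnessing minimal nonfaces share an edge, so one must combine the link-symmetry of Lemma~\ref{links} (to manufacture the $4$-element face $\{u, a, w, b\}$) with the exchange characterization of Lemma~\ref{matroid} (to force an impossible extension of the hypothetical face $\{u, v, w\}$).
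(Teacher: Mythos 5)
Your proof is correct, but it takes a genuinely different route from the paper's. The paper argues recursively: it picks an edge $F$ lying in a minimal nonface, proves that $\link_\D F$ is an induced subcomplex of $\D$ (hence again a matroid whose minimal nonfaces have dimension $2$), shows that the induced subcomplex $\G_1$ on the remaining vertices is a $1$-uniform matroid with $\link_\D G=\link_\D F$ for every facet $G$ of $\G_1$ (using chains of facets and Lemma~\ref{links}), concludes $\D=\link_\D F*\G_1$, and then iterates on $\link_\D F$. You instead obtain the whole decomposition in one step: you split off the simplex factor as the set $W$ of vertices lying in no minimal nonface (equivalently, lying in every facet, which is why $\D=\sigma_W*\D'$), and on the remaining vertices you introduce the relation ``lie in a common minimal nonface'' and prove the key claim that $u\sim v$ and $v\sim w$ force $\{u,v,w\}$ to be a minimal nonface; this gives transitivity and shows that every $3$-subset of a class is a minimal nonface, whence $\D'$ is the join of the $1$-uniform matroids on the classes. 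Your key claim checks out: the degenerate cases collapse via Lemma~\ref{links}, and when $u,v,w,a,b$ are distinct the link identities coming from the two witnessing nonfaces really do yield the seven auxiliary triangles, hence the face $\{u,a,w,b\}$, and the exchange criterion of Lemma~\ref{matroid} applied to $\{u,v,w\}$ and $\{u,a,w,b\}$ gives the contradiction because both $\{u,v,w,a\}$ and $\{u,v,w,b\}$ contain a minimal nonface. What each approach buys: yours is non-recursive and isolates the combinatorial heart in a single transitivity statement about $3$-element circuits, while the paper's peeling argument is more constructive, producing the factors one at a time and establishing along the way that the relevant links are induced subcomplexes. Both ultimately rest on the same two ingredients, Lemma~\ref{links} and Lemma~\ref{matroid}. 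The only terse spots are the easy direction (the componentwise verification of the exchange axiom for a join, which the paper also treats as obvious) and the step $\D=\sigma_W*\D'$; both are routine and neither is a gap.
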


\begin{proof}
It is obvious that the join of two matroids is a matroid. 
Therefore, $\D$ is a matroid if $\D$ is the join of 1-uniform matroids with eventually a simplex. \par
Conversely, assume that $\D$ is a matroid. 
Let $d = \dim \Delta$. Then $d \ge 1$. 
If $d = 1$, $\Delta$ is an $1$-uniform matroid because every set of two vertices of $\D$ is a face of $\D$.
If $d \ge 2$, we choose an edge $F$ which is contained in a minimal nonface $H$ of $\D$. \par
Let $\Delta _1 = \link_\D F$. Then $\dim \D_1 = d-2 \ge 0$. Let $W$ be the vertex set of $\Delta _1$. 
We will show that $\Delta _1$ is the induced subcomplex $\Delta _W$ of $\Delta $ on $W$. 
Assume for the contrary that there is a nonempty face $G$ of $\Delta _W$ such that $F \cup G$ is a nonface of $\Delta $. Since $F \cup \{v\}$ is a face of $\D$ for every vertex $v \in W$, $|G| \ge 2$.
Choose $G$ as small as possible. Then $F \cup G'$ is a face of $\Delta $ for any subset $G' \subset G$ with $|G'| = |G|-1$.  Since $|F \cup G| \ge 4$, $F \cup G$ is not a minimal nonface of $\Delta $. Therefore, there exists a vertex $v \in F$ such that $\{v\} \cup G$ is a nonface of $\Delta $.  By the choice of $G$, this nonface must be minimal. Hence $|\{v\} \cup G| = 2$, which implies $|G| = 2$. Let $u$ be a vertex of $G$. Then $F \cup \{u\}$ is a face of $\Delta $. 
Let $F = \{v,x\}$. By the definition of matroids, $\{x\}\cup G$ must be a face of $\Delta $. By the choice of $F$, there is a minimal nonface $H$ of $\D$ containing $F$. Let $H = \{v,x,y\}$. By Lemma \ref{link}, $\link_\D \{x,y\} = \D_1$. Hence $\{y,u\}$ is a face of $\D$. By the definition of matroids, $\{y\} \cup G$ must be a face of $\D$. 
Since $G \not\in \D_1$, $\{x,y\} \cup G$ is a nonface of $\Delta$. Since every proper subset of $\{x,y\} \cup G$ is a face of $\D$, this nonface is minimal, which is a contradiction to the assumption that every nonface has dimension $2$.
So we have shown that $\D_1$ is an induced subcomplex of $\D$. From this it follows that $\D_1$ is a matroid whose minimal nonfaces have dimension 2. \par

Let $\Gamma _1$ be the induced subcomplex of $\Delta$ on the vertices not contained in $\Delta _1$. 
It is clear that $\Gamma _1$ is a matroid. Since every face of $\D$ containing $F$ properly must contain a vertex of $\D_1$, $F$ is a facet of $\G_1$. Therefore, all facets of $\Gamma _1$ have dimension 1.  
Since every set of $2$ vertices is a face of $\Delta$, $\Gamma _1$ is an 1-uniform matroid. 
We will  show that $\link_\D G = \D_1$ for every facet $G$ of $\Gamma _1$. 
By the definition of matroids, we can find a sequence of facets $F =F_1,F_2,...,F_r = G$ of $\Gamma _1$ 
such that $|F_i \cap F_{i+1}| = 1$, $i =1,...,r-1$. From this it follows that $|F_i \cup F_{i+1}| = 3$.
Since the vertices of $F_2$ are not contained in $\Delta _1$, 
$F_1 \cup F_2$ is a nonface of $\D$. Since $|F_1 \cup F_2| = 3$, $F_1 \cup F_2$ is a minimal nonface of $\D$.  By Lemma \ref{links}, we have $\link_{\Delta } F_1 = \link_{\Delta } F_2 = \D_1$. Similarly, we can show that 
$\link_{\Delta } F_2 = \cdots = \link_{\Delta } F_r = \D_1$. So we obtain $\link_\D G = \D_1$. From this it follows that every facet of $\D$ is a union of a facet of $\G_1$ and a facet of $\D_1$.
 Therefore, $\Delta = \D_1 * \G_1$.\par

If $\Delta _1$ has no nonface, then $\Delta _1$ is a simplex. 
If $\Delta _1$ has a nonface and if $\Delta _1$ is not an 1-uniform matroid,
 we use the above argument to show that there are an 1-uniform matroid $\Gamma _2$ and a matroid $\Delta _2$  such that $\Delta _1 = \G_2 * \D_2$. Proceeding like that we will see  that $\D$ is the join of 1-uniform matroids with possibly a simplex.
\end{proof}

Now we are able to prove Theorem \ref{2-uniform}.

\begin{proof}
Let $\D^*$ denote the simplicial complex with $I_{\D^*} = I(\D)$.
Then the minimal nonfaces of $\D^*$ have dimension 2. Moreover, every vertex of $\D^*$ appears in at least a minimal nonface. Hence the facets of $\D^*$ don't have common vertices. From this it follows that $\D^*$ can not be the join of a complex with a simplex. By Proposition \ref{decomposition 2}, $\D^*$ is a matroid if and only if $\D^*$ is the join of 1-uniform matroids. In this case, the faces of $\D^*$ are unions of faces of the 1-uniform matroids. Hence the minimal nonfaces of $\D^*$ are the minimal nonfaces of  the 1-uniform matroids. The complex generated by the minimal nonfaces of an 1-uniform matroid is just the 2-uniform matroid on the same vertex set.
Therefore, $\D$ is the union of these 2-uniform matroids. Similarly, we can also show that $\D^*$ is the join of 1-uniform matroids if $\D$ is a union of disjoint 2-uniform matroid. Thus, $\D^*$ is a matroid if and only if $\D$ is a union of disjoint 2-uniform matroids. By Theorem \ref{SymCM}, this implies the assertion.
\end{proof}

Replacing condition (iii) of Theorem \ref{2-uniform} by the condition that $\D$ is a union of disjoint $r$-uniform matroids we may expect that the theorem could be extended to arbitrary $r$-dimensional pure complexes. But the following example shows that this is not the case.
 
 \begin{Example}
 {\rm Let $\D$ be the complex generated by 
$\{1,2,3,4\},\{1,2,5,6\},\{3,4,5,6\}.$
Then  
\begin{align*}
I(\D) & = (x_1,x_3) \cap (x_1,x_4) \cap (x_1,x_5) \cap (x_1,x_6) \cap (x_2,x_3) \cap (x_2,x_4)\\
& \quad\; \cap (x_2,x_5) \cap (x_2,x_6) \cap (x_3,x_5) \cap (x_3,x_6) \cap (x_3,x_5) \cap (x_3,x_6).
\end{align*}
 From this it follows that $\D^*$ is generated by the 4-subsets of $\{1,...,6\}$ different than $\{1,2,3,4\},\{1,2,5,6\},\{3,4,5,6\}.$ It is easy to check that $\D^*$ is a matroid.
Hence $I(\D)^{(m)}$ is Cohen-Macaulay for every $m \ge 1$ by Theorem \ref{SymCM}. 
However, $\D$ can not be a union of disjoint 3-uniform matroids.
}
 \end{Example}
 
The \textit{cover  ideal} $J(\Delta)$ of the simplicial complex $\D$ is defined by 
\[
 J(\Delta) = \bigcap_{F \in \mathcal{F}(\Delta)} P_{F},
\]
where $P_F = (x_i|\ i \in F)$. For instance, unmixed squarefree monomial ideals of codimension 2 are just cover ideals of graphs.  \smallskip

The name cover ideal comes from the fact that $J(\D)$ is generated by squarefree monomials $x_{i_1}\cdots x_{i_r}$ with $\{i_1,...,i_r\} \cap F \neq \emptyset$ for every facet $F$ of $\D$, which correspond to covers of the facets of $\D$. Note that the equality between ordinary and symbolic powers of $J(\D)$ were already studied in \cite{HHT}, \cite{HHTZ}.\smallskip

Using a well known result in matroid theory we are able to derive from Theorem \ref{SymCM} the following combinatorial characterization of the Cohen-Macaulayness of large symbolic powers of $J(\D)$.

\begin{Theorem}\label{cover}
Let $\Delta$ be a simplicial complex.
The following conditions are equivalent: \par
{\rm (i)} $J(\Delta)^{(m)}$ is Cohen-Macaulay for every $m\ge 1$.\par
{\rm (ii)} $J(\Delta )^{(m)}$ is Cohen-Macaulay for some $m\ge 3$.\par
{\rm (iii)} $\D$ is a matroid.
\end{Theorem}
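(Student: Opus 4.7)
The plan is to realize $J(\Delta)$ as the Stanley--Reisner ideal of an auxiliary ``dual'' complex and then combine Theorem \ref{SymCM} with classical matroid duality. Define $\Gamma$ to be the simplicial complex on $[n]$ whose facets are the complements $\overline F = [n]\setminus F$ for $F \in \mathcal{F}(\Delta)$. Since $\mathcal{F}(\Delta)$ is an antichain, so is $\{\overline F : F \in \mathcal{F}(\Delta)\}$, so $\Gamma$ is well defined with $\mathcal{F}(\Gamma) = \{\overline F : F \in \mathcal{F}(\Delta)\}$. From the usual prime decomposition,
\[
I_\Gamma \;=\; \bigcap_{G \in \mathcal{F}(\Gamma)} P_{\overline G} \;=\; \bigcap_{F \in \mathcal{F}(\Delta)} P_F \;=\; J(\Delta),
\]
and consequently $J(\Delta)^{(m)} = I_\Gamma^{(m)}$ for every $m \ge 1$. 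Thus the Cohen--Macaulayness of $J(\Delta)^{(m)}$ is exactly the Cohen--Macaulayness of $I_\Gamma^{(m)}$, which is governed by Theorem \ref{SymCM}.

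The matroid-theoretic input is the classical fact that the dual of a matroid is a matroid: for a pure simplicial complex, $\mathcal{F}(\Delta)$ satisfies the basis exchange axiom if and only if $\mathcal{F}(\Gamma)$ does, because the bases of the dual matroid $M^*$ are precisely the complements of the bases of $M$. Since dualization is involutive, $\Delta$ is a matroid if and only if $\Gamma$ is. With this in hand the equivalences follow directly: (i)$\Rightarrow$(ii) is trivial; for (ii)$\Rightarrow$(iii), Cohen--Macaulayness of $S/I_\Gamma^{(m)}$ forces $S/I_\Gamma$ to be equidimensional, hence $\Gamma$ (and therefore $\Delta$) to be pure, and then Theorem \ref{SymCM} gives that $\Gamma$ is a matroid and matroid duality that $\Delta$ is; for (iii)$\Rightarrow$(i), if $\Delta$ is a matroid then so is $\Gamma$, and Theorem \ref{SymCM} yields Cohen--Macaulayness of $I_\Gamma^{(m)} = J(\Delta)^{(m)}$ for every $m \ge 1$.

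There is no serious obstacle here; the key conceptual point is the identification $J(\Delta) = I_\Gamma$ together with matroid duality, which together let us reduce the problem completely to Theorem \ref{SymCM}. The only minor loose ends are checking purity of $\Delta$ before invoking the duality (which falls out of the equidimensionality forced by Cohen--Macaulayness) and the low-dimensional edge cases $\dim \Gamma \le 0$, which occur only when every facet of $\Delta$ has size $n-1$ or $n$; in those cases $J(\Delta)$ is either $\mm$ or generated by a single monomial, so all of its symbolic powers are trivially Cohen--Macaulay and $\Delta$ is automatically a matroid, so the equivalences hold directly.
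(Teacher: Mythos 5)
Your argument is essentially the paper's own proof: the paper likewise passes to the complex $\Delta^c$ generated by the complements of the facets of $\Delta$, observes $I_{\Delta^c}=J(\Delta)$, invokes matroid duality (Schrijver, Theorem 39.2) to see that $\Delta^c$ is a matroid if and only if $\Delta$ is, and concludes via Theorem \ref{SymCM}. The only slip is your description of the edge case $\dim\Gamma\le 0$: there $J(\Delta)$ need not be $\mm$ or principal (for facets $\{2,3\},\{1,3\}$ in $K[x_1,x_2,x_3]$ one gets $J(\Delta)=(x_3,x_1x_2)$), but the conclusion still holds, since in that case every $J(\Delta)^{(m)}$ is an intersection of powers of face primes of dimension one, so $\mm$ is not an associated prime and the one-dimensional ring $S/J(\Delta)^{(m)}$ has positive depth, hence is Cohen--Macaulay, while $\Delta$ (all facets of cardinality $n-1$, or the full simplex) is automatically a matroid.
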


\begin{proof}
(i)$\Rightarrow$(ii) is clear.

(ii)$\Rightarrow$(iii) and (iii)$\Rightarrow$(i).
Let $\Delta^c$  be the simplicial complex generated by the complements of the facets of $\D$ in $[n]$.
Then $I_{{\Delta }^c}=J(\Delta )$. 
It is well known that $\D^c$ is a matroid if and only if so is $\D$ \cite[Theorem 39.2]{Sch}.  
Therefore, the assertion follows from Theorem \ref{SymCM}.
\end{proof}

By Lemma \ref{matroid} we can easily test a matroid. For instance, 
if $\dim \D = 1$, $\D$ can be considered as a graph and we obtain the following result.
This result is also proved in [2] by a different method.

\begin{Corollary}
Let $\G$ be a graph without isolated vertices.
Then the following conditions are equivalent:\par
{\rm (i)} $J(\G)^{(m)}$ is Cohen-Macaulay for every $m\ge 1$.\par
{\rm (ii)} $J(\G)^{(m)}$ is Cohen-Macaulay for some $m\ge 3$.\par
{\rm (iii)} Any pair of disjoint edges of $\G$ is contained in a 4-cycle.
\end{Corollary}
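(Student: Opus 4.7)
The plan is to deduce this corollary directly by combining two results already established in the paper. By Theorem \ref{cover} applied to the graph $\G$ (viewed as a one-dimensional simplicial complex on its vertex set), the conditions (i) and (ii) are each equivalent to $\G$ being a matroid. So it suffices to translate the matroid condition for a graph into the combinatorial condition (iii).

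For that translation I would invoke Corollary \ref{matroid graph}, which says exactly that a graph without isolated vertices is a matroid if and only if every pair of disjoint edges of the graph is contained in a $4$-cycle. The hypothesis that $\G$ has no isolated vertices is precisely what is needed to apply this corollary (and it matches our running convention on graphs stated in Section~2).

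Concretely, the proof proceeds in the sequence: (i)$\Rightarrow$(ii) is trivial; (ii)$\Rightarrow$(iii) follows from Theorem \ref{cover} (giving that $\G$ is a matroid) together with Corollary \ref{matroid graph} (giving the $4$-cycle condition); and (iii)$\Rightarrow$(i) reverses the same two implications, namely Corollary \ref{matroid graph} yields that $\G$ is a matroid, and then Theorem \ref{cover} yields Cohen-Macaulayness of $J(\G)^{(m)}$ for all $m \ge 1$.

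There is essentially no obstacle here beyond checking that the two cited results apply verbatim; the corollary is really just a combinatorial reformulation of Theorem \ref{cover} in the graph case, made explicit via Corollary \ref{matroid graph}. The only point worth a line of remark in the write-up is that $\dim \G = 1$ is allowed in Theorem \ref{cover}, since that theorem is stated for an arbitrary simplicial complex $\D$.
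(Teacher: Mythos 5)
Your proposal is correct and matches the paper's own proof, which likewise deduces the corollary by combining Theorem \ref{cover} with Corollary \ref{matroid graph}. Nothing further is needed.
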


\begin{proof}
The assertion follows from the above theorem and Corollary \ref{matroid graph}.
\end{proof}

By Theorem \ref{SymCM} and Theorem \ref{cover}, the Cohen-Macaulayness of  $I_\D^{(m)}$ is equivalent to that of $J(\D)^{(m)}$ for $m \ge 3$.
Therefore, we may ask whether this holds also for $m = 1,2$.
The following example shows that the answer is no.

\begin{Example}
{\rm Let $\D$ be the graph of a 5-cycle. Then $I_\D^{(2)}$ is Cohen-Macaulay \cite[Theorem 2.3]{MiT1} .
We have
$$J(\D) = (x_1,x_2) \cap (x_2,x_3) \cap (x_3,x_4) \cap (x_4,x_5) \cap (x_5,x_1).$$
It can be easily checked that $J(\D)$ is not Cohen-Macaulay so that $J(\D)^{(m)}$  is neither for any $m \ge 2$
by \cite[Theorem 2.6]{HTT}.}
\end{Example}    

\end{document}